\documentclass[12pt,a4paper,reqno]{amsart}
\usepackage[utf8]{inputenc}
\usepackage[a4paper,margin=1in
]{geometry}
\usepackage[english]{babel}
\usepackage{float}

\usepackage[normalem]{ulem}
\usepackage{cancel}


\usepackage{verbatim}

\usepackage[colorinlistoftodos, textwidth=3cm]{todonotes}
\newcounter{dccomment}

\usepackage{latexsym}    
\usepackage{amssymb}   
\usepackage{amsmath}    
\usepackage{amsbsy}
\usepackage{amsthm}
\usepackage{amsgen}
\usepackage{amsfonts}
\usepackage{bbm}
\usepackage{array}
\usepackage{lmodern}

\usepackage{graphicx}
\usepackage{caption}
\usepackage[labelformat=simple]{subcaption}

\newsavebox{\largestimage}

\usepackage{tikz}
\usepackage{tikz-cd}
\usetikzlibrary{decorations.pathmorphing}
\usetikzlibrary{calc, decorations.markings}

\usepackage{xcolor}

\usepackage{verbatim}

\usepackage[shortlabels]{enumitem}

\usepackage{lineno}

\usepackage[nostamp]
{draftwatermark}
\SetWatermarkText{In Preparation}
\SetWatermarkScale{0.5}
\SetWatermarkColor[gray]{0.9}


\newcommand{\1}{\mathbbm{1}} 
\newcommand{\II}{\mathrm{II}} 
\newcommand{\ar}{\mathrm{ar}} 
\newcommand{\C}{\mathbb{C}} 
\newcommand{\Ch}{\mathrm{Ch}} 
\newcommand{\diam}{\mathrm{diam}} 
\newcommand{\D}{\mathbb{D}} 
\newcommand{\F}{\mathsf{F}} 	
\newcommand{\fol}{\mathcal{F}} 
\newcommand{\g}{\mathtt{g}} 
\newcommand{\G}{\mathcal{G}} 
\newcommand{\hh}{\mathtt{h}} 
\newcommand{\Hol}{\mathrm{Hol}} 
\newcommand{\hor}{\mathcal{H}} 
\newcommand{\Id}{\mathrm{Id}} 
\newcommand{\Iso}{\mathrm{Isom}} 
\newcommand{\kernel}{\mathrm{Ker}} 
\newcommand{\m}{\mathfrak{m}} 
\newcommand{\N}{\mathbb{N}} 
\newcommand{\Ns}{\mathsf{N}} 
\newcommand{\ob}{\mathrm{ob}} 
\newcommand{\Orth}{\mathrm{O}} 
\newcommand{\Pp}{\mathsf{P}} 
\newcommand{\prin}{\mathrm{prin}} 
\newcommand{\proj}{\mathrm{proj}} 
\newcommand{\QQ}{\mathtt{Q}} 
\newcommand{\R}{\mathbb{R}} 
\newcommand{\reg}{\mathrm{reg}} 
\newcommand{\Sec}{\mathrm{Sec}} 
\newcommand{\SO}{\mathrm{SO}} 
\newcommand{\Sh}{\mathrm{Sh}} 
\newcommand{\Sp}{\mathbf{S}} 
\newcommand{\ver}{\mathcal{V}} 
\newcommand{\Vol}{\mathrm{Vol}} 
\newcommand{\Z}{\mathbb{Z}} 


\newtheorem{theorem}{Theorem}  
\newtheorem{corollary}[theorem]{Corollary}  

	\newtheorem{thm}{Theorem}[section]
	
	\newtheorem{lemma}[thm]{Lemma}
	\newtheorem{cor}[thm]{Corollary}
	
	\newtheorem{proposition}[thm]{Proposition}
	
	\newtheorem{problem}[thm]{Problem}
	\newtheorem{conjecture}[thm]{Conjecture}

\theoremstyle{definition}	
	\newtheorem{remark}[thm]{Remark}
	\newtheorem{definition}[thm]{Definition}
	\newtheorem{example}[thm]{Example}
	

 \newtheoremstyle{TheoremNum}
        {\topsep}{\topsep}              
        {\itshape}                      
        {}                              
        {\bfseries}                     
        {.}                             
        { }                             
        {\thmname{#1}\thmnote{ \bfseries #3}}
    \theoremstyle{TheoremNum}
    \newtheorem{duplicate}{Theorem}
    \newtheorem{duplicateCOR}{Corollary}

\usepackage[hyphens]{url}    
\usepackage{hyperref}
\hypersetup{
    colorlinks=true,
    linkcolor=blue,
    citecolor=blue,
    urlcolor=blue,
    pdfauthor={Diego Corro},
    pdftitle={Singular Riemannian foliations and collapse}
}
\usepackage[hyphenbreaks]{breakurl}

\usepackage[reftex]{theoremref}
\title[Singular Riemannian foliations and collapse]{Singular Riemannian foliations and collapse
}

\subjclass[2020]{53C12, 53C20, 53C21, 53C23, 58H05, 58H15, 22A22, 57R30, 32G99}
\keywords{Lie groupoids, Cheeger deformation, singular Riemannian foliation,F-structures, collapse with bounded curvature}

\author[D.~Corro]{Diego Corro$^{\ast}$}
\address[D. CORRO]{School of Mathematics, Cardiff University, United Kingdom}
\urladdr{\url{www.diegocorro.com}}
\email{\href{mailto:diego.corro.math@gmail.com}{diego.corro.math@gmail.com}}
\thanks{$^\ast$Supported  DFG-Eigenestelle Fellowship CO 2359/1-1, and a UKRI Future Leaders
Fellowship [grant number MR/W01176X/1; PI: J Harvey].}

\begin{document}

\overfullrule = 5pt

\begin{abstract}
In this survey we present classical results on methods to use group actions to collapse manifolds to the orbit spaces while keeping some control on the curvature, and recent extensions of these constructions to the setting of singular Riemannian foliations.
\end{abstract}

\maketitle


\section{Introduction}


\emph{Regular foliations} are generalizations of fiber bundles and by the Frobenius Theorem, a regular foliation corresponds to the maximal level sets for solutions to a non-singular linear system of PDE's (the PDE's determine an integrable distribution in the tangent bundle). In \cite{Reinhart1959} Reinhart introduced a particular type of regular foliation, the so-called (regular) Riemannian foliations, where the leaves are locally equidistant with respect to a Riemannian metric. Riemannian submersions are examples of Riemannian foliations \cite{GromollWalschap}. 

When considering a partition of the a manifold by injectively immersed submanifolds whose tangent spaces depend continuously on the base point and the dimension of the tangent spaces of the leaves is non-constant, we obtain a so-called \emph{singular foliation}, studied by Sussmann \cite{Sussmann73} and Stefan \cite{Stefan1974}. These foliations correspond to integrable singular linear systems of PDE's. When we add the geometric hypothesis of the leaves being locally equidistant we obtain a so-called \emph{singular Riemannian foliation} (see Section \ref{Singular Riemannian foliations} for a precise definition).

It was proven that given a regular Riemannian foliation, the foliation induced by taking the closure of its leaves is a singular Riemannian foliation \cite{Molino}. Thus singular Riemannian foliations are  generalizations of regular Riemannian foliations. Another large family of examples of singular Riemannian foliations is given by Lie group actions by isometries \cite{AlexandrinoBettiol}. 

In general, singular Riemannian foliations show remarkable analogies with Lie group actions by isometries,  as pointed out in \cite[p. 185]{Molino}. For example, a singular Riemannian foliation induces on the manifold a stratification by the dimension of the leaves, and when the leaves are closed the quotient space of the foliation inherits a nice geometric structure (see \cite{Corro}). But as shown in \cite{FerusKarcherMuenzner1981,Radeschi2014}, there are infinitely many singular Riemannian foliations that are not induced by group actions by isometries nor by Riemannian submersions. Thus singular Riemannian foliations are strictly more general than group actions by isometries.

Since the 1980's there has been a continuous study of singular Riemannian foliations, in particular topological properties of the foliations and also how the presence of a singular Riemannian foliation constrains the topology of the foliated manifold, see for example \cite{Alexandrino2010,AlexandrinoBriquetToeben2012,AlexandrinoInagakiStruchiner2018,GalazGarciaRadeschi2015,Corro2019,CorroMoreno2020,GeRadeschi2013,Lytchak2010,RadeschiThesis}. More recently there has been interest in the interplay between singular Riemannian foliations and the existence of foliated solutions to PDE's \cite{CorroFernandezPerales22,AlexandrinoCavenaghiCorroInagaki2024}. Thus, we can consider singular Riemannian foliations as a generalized notion of symmetry for Riemannian manifolds. Moreover, by \cite{CorroGalazGarcia2024} this generalized notion of symmetries is compatible with the classical notion of group actions by isometries.  

For the study of manifolds with positive (non-negative) curvature, Grove has proposed the so-called \emph{symmetry program} \cite{Grove2002}: We should try to study the structure of manifolds with positive (non-negative) curvature which have a large degree of symmetry in some sense (see for example \cite{GroveSearle1994,GroveZiller2002,FangRong2005,Rong2002,Wilking2003}). Part of this program consists on using the presence of symmetries to construct metrics with positive curvature, for example \cite{CorroGalazGarcia2016,GilkeyParkTuschmann1998,GroveZiller2000,GroveZiller2002,SearleSolorzanoWilhelm2015}. 

In several of the aforementioned results techniques for deforming geometries along the orbits of a group action  while preserving control of the curvature are exploited. To to the best of the author knowledge, there are few references in the literature on the subject of deforming foliations, see for example \cite{CheegerGromov1990,FarrellJones1998,CrainicMestreStruchiner2020,delHoyoFernandes2018,EpsteinRosenberg1977,Hamilton1978,GroveKarcher1973}.

In this survey we present two recent deformation results \cite{Corro2024,Corro2025} by the author for singular Riemannian foliations. In \cite{Corro2024} closed singular foliations with flat leaves are considered. This are part of the so-called \emph{$A$-foliations}, closed singular Riemannian foliations with aspherical leaves, introduced in \cite{GalazGarciaRadeschi2015}, and further studied in \cite{Corro2019}, as generalizations of torus actions by isometries. In the simply-connected case by \cite{Corro2019}, the fundamental group of the leaves are \emph{Bieberbach groups}; i.e.  discrete co-compact torsion free subgroups of the
isometry group of Euclidean spaces, and correspond to the fundamental groups of flat manifolds. Thus it is natural to consider the extra hypothesis of actually having flat geometries on the leaves, but there are examples of closed singular Riemannian foliations with aspherical leaves that do not admit such a flat metric structure \cite{FarrellWu2018}.

The main result in \cite{Corro2024} is that a closed regular singular foliation with flat leaves on a simply-connected manifold is given by an (almost-free) smooth effective torus action. A key result to achieve this conclusion is that a Riemannian manifold $(M,\mathtt{g})$ equipped with closed regular foliation $\mathcal{F}$ with flat leaves can be collapsed with bounded curvature and bounded diameter by shrinking the leaves of the foliation. Recall that a Riemannian manifold $(M,\mathtt{g})$ can be \emph{collapsed with uniformly bounded curvature} if there exists a sequence $\{\mathtt{g}_i\}_{i\in \N}$ of Riemannian metrics on $M$ such that $\g_1=\g$, $\mathrm{inj}(\g_n)\to$ as $i\to\infty$, and there exists $\Lambda\geq 0$ such that $\|\Sec(\g_i)\|\leq \Lambda$. When there also exists $D>0$ such that $\mathrm{diam}(M,\g_i)\leq D$, we say that $(M,\g)$ \emph{collapses with bounded diameter}.
\begin{theorem}[Theorem B in \cite{Corro2024}]\th\label{T: Collapse of regular foliation}
Consider $(M, \fol, \g)$ a regular closed non-trivial foliation such that for $L\in \fol$ we have $(L,\g|_{L})$ is a flat manifold. For $0<\delta\leq 1$ and $p\in M$ given we consider $T_p M= T_pL_p \oplus \nu_p (L_p)$ and set 
\[
\g_\delta(p) = \delta^2 (\g(p)|_{T_p L_p})\oplus (\g(p)|_{\nu_p(L_p)}).
\]
Then $(M,\g_\delta)$ collapses with uniformly bounded curvature and uniformly bounded diameter.
\end{theorem}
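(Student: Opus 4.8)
The plan is to regard $\g_\delta$ as the \emph{canonical variation} of the Riemannian foliation $\fol$, obtained by shrinking the leaf directions while keeping the normal directions fixed, and to control separately the diameter (which is elementary) and the pair curvature/injectivity radius (where flatness of the leaves is decisive). First I would dispose of the diameter. Since the splitting $T_pM=T_pL_p\oplus\nu_p(L_p)$ is $\g$-orthogonal and $0<\delta\le 1$, for every $w=w^v+w^h$ with $w^v\in T_pL_p$ and $w^h\in\nu_p(L_p)$ one has $\g_\delta(w,w)=\delta^2\g(w^v,w^v)+\g(w^h,w^h)\le \g(w,w)$, so $\g_\delta\le\g$ as quadratic forms. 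Hence lengths of curves, and therefore distances, can only decrease, and $\diam(M,\g_\delta)\le\diam(M,\g)=:D<\infty$, using compactness of $M$. This already yields the uniform diameter bound, with a single constant $D$ independent of $\delta$.

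The heart of the argument is the uniform curvature bound, where I would invoke the O'Neill canonical-variation formulas, valid pointwise since a Riemannian foliation is locally a Riemannian submersion. Working in a $\g_\delta$-orthonormal frame adapted to $T_pL_p\oplus\nu_p(L_p)$, the curvature tensor of $\g_\delta$ is an expression in the transverse curvature, the two O'Neill tensors (the second fundamental form $\II$ of the leaves and the integrability tensor $A$ of the normal distribution) together with their covariant derivatives, and the \emph{intrinsic} curvature of the leaves, with coefficients polynomial in $\delta$. The decisive computation is the purely leafwise sectional curvature: a Koszul-formula computation shows that the second fundamental form of a leaf in $(M,\g_\delta)$ equals $\delta^2$ times the one in $(M,\g)$, so by the Gauss equation, for leaf vectors $U,V$,
\[
K^{\g_{\delta}}(U,V)=\frac{1}{\delta^2}\,K^{\mathrm{leaf}}_{\g}(U,V)+\frac{\langle\II(U,U),\II(V,V)\rangle-|\II(U,V)|^2}{|U|^2\,|V|^2-\langle U,V\rangle^2},
\]
where the second summand is independent of $\delta$ and bounded on the compact manifold $M$. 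The first summand is exactly the term threatening to diverge like $\delta^{-2}$, and this is where I use that $(L,\g|_L)$ is flat: then $K^{\mathrm{leaf}}_{\g}\equiv 0$ and the divergent contribution disappears. For the horizontal planes one gets $K^{\g_{\delta}}(X,Y)=K^{\perp}(X,Y)-3\,\delta^2\,|A_XY|^2$, bounded since $\delta\le 1$ and all tensors are bounded by compactness; the mixed components carry only non-negative powers of $\delta$ and are bounded in the same manner. Collecting these estimates produces a single $\Lambda\ge 0$ with $\|\Sec(\g_\delta)\|\le\Lambda$ for all $\delta\in(0,1]$.

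Finally I would establish the genuine collapse $\mathrm{inj}(\g_\delta)\to 0$. Because the foliation is non-trivial, $k:=\dim\fol\ge 1$, and scaling $k$ orthogonal directions by $\delta$ gives $\Vol(M,\g_\delta)=\delta^{k}\,\Vol(M,\g)\to 0$ as $\delta\to 0$. Combined with the bounds $\|\Sec(\g_\delta)\|\le\Lambda$ and $\diam(M,\g_\delta)\le D$ already obtained, this forces the injectivity radius to zero: if $\mathrm{inj}(\g_\delta)\ge\iota_0>0$ held along a sequence $\delta\to 0$, then by the Günther volume comparison a metric ball of radius $r=\min(\iota_0,\pi/\sqrt{\Lambda})$ would have volume at least that of the ball of radius $r$ in the space form of curvature $\Lambda$, a positive constant independent of $\delta$, contradicting $\Vol(M,\g_\delta)\to 0$. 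Hence $\mathrm{inj}(\g_\delta)\to 0$, and $(M,\g)$ collapses with uniformly bounded curvature and uniformly bounded diameter.

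The main obstacle is the curvature step, and within it the leafwise sectional curvature: one must verify both the scaling of $\II$ by $\delta^2$ and that no other component of the curvature tensor of $\g_\delta$ conceals a negative power of $\delta$. Once the canonical-variation formulas are in hand, flatness is precisely the algebraic condition that cancels the unique $\delta^{-2}$ term, after which compactness of $M$ bounds everything else uniformly.
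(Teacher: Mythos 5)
Your proof is correct, and its core coincides with the paper's: both rest on the canonical-variation (O'Neill-type) curvature formulas, with flatness of the leaves cancelling the unique $\delta^{-2}$ term in the leafwise sectional curvature and compactness of $M$ bounding everything else uniformly in $\delta\in(0,1]$. The differences are in the supporting structure. The paper works through the quotient: it uses that $(M/\fol,d^{\ast})$ is a Riemannian orbifold, writes the three curvature identities for the Riemannian submersion $\pi\colon (M_{\prin},\g)\to (M^{\ast}_{\prin},\g^{\ast})$ over the principal stratum — in particular expressing horizontal curvatures through the base curvature — and then extends the bound to all of $M$ by continuity. You stay entirely on $M$: the leafwise planes are handled by the Gauss equation together with the scaling $\II_\delta=\delta^{2}\,\II$ of the second fundamental form (which your Koszul computation correctly yields for the canonical variation), so no quotient geometry or stratification argument is needed; this is more self-contained, since a regular Riemannian foliation is only locally a submersion and may have exceptional leaves. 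You also establish two things the paper's proof leaves implicit: the uniform diameter bound, via the elementary monotonicity $\g_\delta\leq\g$, and the actual collapse $\mathrm{inj}(\g_\delta)\to 0$, via $\Vol(M,\g_\delta)=\delta^{k}\Vol(M,\g)\to 0$ combined with G\"{u}nther's volume comparison under the upper curvature bound — the paper's proof only addresses the curvature estimate. Two cosmetic points: your Gauss-equation correction term should enter with the opposite sign (the ambient curvature equals the intrinsic one \emph{minus} the $\II$-expression), which is harmless since only its boundedness matters; and your horizontal formula carries $\delta^{2}$ where the paper's stated identity produces $\delta$ — a difference of convention that is likewise immaterial for the uniform bound.
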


In this survey we also present a slight generalization of this result to closed singular Riemannian foliations with flat leaves, in the case when the singular strata are isolated from each other, and the leaves have positive dimension.

\begin{theorem}\th\label{MT: Collapsing general flat foliations}
Consider $(M, \fol, \g)$ a  closed singular Riemannian foliation with flat leaves on a compact Riemannian manifold, such that all singular strata are isolated from one another. Then we can collapse $(M,\g)$  with bounded curvature, in a way that also the volume of the collapsing sequence goes to $0$.
\end{theorem}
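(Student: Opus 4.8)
The plan is to localize using the stratification by leaf dimension and to isolate the genuinely new difficulty to small neighborhoods of the singular strata. Since by hypothesis the singular strata $\Sigma_1,\dots,\Sigma_k$ are isolated from one another, I would first fix pairwise disjoint tubular neighborhoods $U_1,\dots,U_k$, so that the foliation is regular with flat leaves on the compact complement $M\setminus\bigcup_j U_j$, where \thref{T: Collapse of regular foliation} is available as a model. Inside each $U_j$ I would use the local description of a singular Riemannian foliation: via the normal exponential map of $\Sigma_j$ and the infinitesimal foliation on the normal spaces, a nearby regular leaf decomposes into a ``base'' direction tangent to the (flat) singular leaf and a ``fibre'' direction that degenerates as one approaches $\Sigma_j$.

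I expect the main obstacle to be precisely these degenerating directions. Across $\Sigma_j$ the leaf dimension drops, so the tangent distribution $T\fol$ does not extend continuously, and the naive recipe of \thref{T: Collapse of regular foliation} --- scale the \emph{whole} leaf by $\delta^2$ and interpolate the scaling of the fibre direction back to $1$ at $\Sigma_j$ --- is fatal. In the normal model $dr^2+f(r)^2\,d\theta^2$ one must cap a collapsed cone ($f\sim\delta r$ for $r\ge r_0$) by a smooth tip ($f(0)=0,\ f'(0)=1$), and the sectional curvature $-f''/f$ then obeys $1-\delta=-\int_0^{r_0}f''\,dr\le\Lambda\int_0^{r_0}f\,dr\sim\Lambda\,\delta r_0^2$, forcing $\Lambda\gtrsim(\delta r_0^2)^{-1}\to\infty$. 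Hence the degenerating directions can be neither shrunk nor un-shrunk with bounded curvature, and the entire collapse must be carried out only along leaf directions that stay non-degenerate up to and including each $\Sigma_j$.

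To manufacture such directions I would exploit the flatness of the leaves. On a flat leaf the induced connection is flat, so the leaf carries a distinguished family of parallel vector fields, and these are Killing for the leaf metric; transporting them through the transverse structure of the foliation --- the germs of foliate isometries of the Molino-type sheaf attached to $(M,\fol,\g)$ --- produces local isometric torus actions whose orbits are tangent to every leaf, the singular leaves included. The points where the two standing hypotheses enter are exactly here: because all leaves have positive dimension these local actions can be taken of positive rank, and because the singular strata are isolated the local actions over the disjoint $U_j$ and over the regular complement patch into a single F-structure $\F$ of positive rank on $M$, tangent to $\fol$ and non-degenerate everywhere. In the Hopf $T^2$-foliation of $S^3$ this selects the globally free diagonal (Hopf) circle, rather than either coordinate circle, each of which vanishes on one of the two singular circles.

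With $\F$ in hand the collapse is the Cheeger--Gromov deformation along $\F$: scaling the orbit directions of $\F$ by $\delta^2$, exactly as in \thref{T: Collapse of regular foliation} but now along the non-degenerate distribution instead of the full leaf. This is a Cheeger deformation along an isometric distribution, so the sectional curvatures stay bounded independently of $\delta$, the flat leaf-intrinsic geometry contributing nothing; and since $\F$ has constant positive rank $\rho$, the volume scales like $\delta^{\rho}\to0$. It remains to verify that over the overlaps $\partial U_j$ the local and global descriptions reduce to scaling the same orbit directions, so that the metrics $\g_\delta$ assemble into a smooth family. The delicate point throughout --- and the step I would expect to require the most care --- is the global existence and consistency of the positive-rank F-structure $\F$; the linear local model near each $\Sigma_j$ together with the isolation of the strata is what makes this patching possible.
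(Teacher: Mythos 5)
Your reduction to disjoint tubular neighborhoods of the isolated singular strata, and your cone-tip computation showing that the degenerating leaf directions can be neither shrunk nor un-shrunk with bounded curvature, both match the actual geometry of the problem. But the step you yourself flag as the most delicate --- manufacturing a positive-rank F-structure tangent to $\fol$ and non-degenerate across the singular strata out of parallel fields on the flat leaves --- is a genuine gap, and in general it is false. A compact flat manifold carries a nonzero parallel (equivalently, Killing) vector field only if its first Betti number is positive; a Bieberbach manifold such as the Hantzsche--Wendt $3$-manifold (holonomy $\Z_2\oplus\Z_2$, $b_1=0$) has none, and its isometry group has trivial identity component. For a foliation whose leaves are such manifolds your mechanism produces rank $0$: there are no leaf-tangent isometric circle directions at all, let alone local torus actions to patch. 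Even when the leaves are tori, promoting leafwise parallel fields to local isometric actions coherent under the (possibly disconnected) foliation holonomy and then gluing them globally is essentially the rigidity statement of \cite[Theorem A]{Corro2024}; in the paper's program the F-structure is extracted \emph{after} the collapse via Cheeger--Fukaya--Gromov (this is \th\ref{MC: collapse controlled by torus action}), not assumed beforehand, so your route is close to circular.

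The paper avoids isometric actions entirely: near each (closed, because isolated) singular stratum, the Alexandrino--Radeschi linearization provides a regular subfoliation $\fol'_L\subset\fol$ on a whole tubular neighborhood whose leaves are diffeomorphic to the singular leaves (\th\ref{C: polarization foliation}), and these subfoliations are nested compatibly with the full foliation on the regular part (\th\ref{L: existence of cover with compatible regular foliation}). This nested system is only a polarization in the non-pure sense --- the rank jumps between strata and the regular part --- so the collapse is carried out as in Section 4 of \cite{CheegerGromov1986}: one shrinks along the local subfoliations with exponents driven by a foliated partition of unity and blows the metric up by $\log(\delta)^2$ in the transition regions. That is exactly why the paper obtains bounded curvature and $\mathrm{vol}(\g_\delta)(M)\leq C\delta^{\ell}|\log(\delta)|^{m}\to 0$ but \emph{not} bounded diameter; your claimed $\delta$-independent polarized collapse along a globally non-degenerate structure would yield a diameter bound as well, which the paper explicitly notes fails for this construction. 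To salvage your outline, replace ``local isometric torus actions from parallel fields'' by ``local regular subfoliations from the linearized foliation,'' and accept the $\log(\delta)$ blow-up where their ranks jump.
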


The lack of a uniform diameter bound is a big constrain, since it does not allow us to follow the same approach as in \cite{Corro2024} to obtain rigidity conclusions about the deformation, namely \cite[Claim 3.2.2]{Corro2024} fails. Nonetheless, we still obtain that the collapsing metrics can be approximated by Riemannian metrics which are invariant with respect to an $\F$-structure when $M$ has finite fundamental group \cite{CheegerFukayaGromov1992} (see Section~\ref{S: Classical deformations} for definitions).

\begin{corollary}\th\label{MC: collapse controlled by torus action}
Consider $(M, \fol, \g)$ a  closed singular Riemannian foliation with flat leaves on a compact Riemannian manifold with finite fundamental group, such that all singular strata are isolated from one another, and assume that $\lambda\leq \Sec(\g)\leq \Lambda$.  Let $\{\g_i\}_{i\in \N}$ be the collapsing sequence given by \th\ref{MT: Collapsing general flat foliations}. Then, given $\varepsilon>0$ there exists $N:=N(\varepsilon)\in \N$ such that for all $i\geq N$ there exists an $F$-structure $\fol_{\varepsilon,i}$ on $M$ and an $\fol_{\varepsilon,i}$-invariant Riemannian metric $\g_{\varepsilon,i}$ such that
\[
e^{-\varepsilon}\g_i<\g_{\varepsilon,i}< e^{\varepsilon}\g_i,
\]
and
\[
\lambda-\varepsilon\leq \Sec(\g_{\varepsilon,i})\leq \Lambda+\varepsilon.
\]
\end{corollary}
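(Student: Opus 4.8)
The plan is to feed the collapsing sequence $\{\g_i\}_{i\in\N}$ produced by \th\ref{MT: Collapsing general flat foliations} into the smoothing and structure theory of Cheeger--Fukaya--Gromov \cite{CheegerFukayaGromov1992}. First I would record that the collapse is \emph{uniform}. By \th\ref{MT: Collapsing general flat foliations} the $\g_i$ have uniformly bounded curvature and $\Vol(M,\g_i)\to 0$; since the leaf-shrinking keeps the sectional curvature within the ambient bounds, we may assume $\lambda\le\Sec(\g_i)\le\Lambda$. As $M$ is compact, a point $p$ with $\mathrm{inj}_{\g_i}(p)\ge\rho$ would force $\Vol(M,\g_i)\ge\Vol(B_\rho(p))\ge v(\dim M,\Lambda,\rho)>0$; since $\Vol(M,\g_i)\to0$ this is impossible for large $i$, so $\max_{M}\mathrm{inj}(\g_i)\to 0$ and the whole of $M$ collapses with bounded curvature.

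Next I would invoke the main smoothing theorem of \cite{CheegerFukayaGromov1992}: for the prescribed $\varepsilon$ there is a collapse threshold such that once $\mathrm{inj}(\g_i)$ is small enough, that is for all $i\ge N(\varepsilon)$, the manifold $M$ carries a nilpotent Killing structure $\fol_{\varepsilon,i}$ of positive rank (the rank is positive because the leaves of $\fol$ have positive dimension) together with an invariant metric $\g_{\varepsilon,i}$ satisfying $e^{-\varepsilon}\g_i<\g_{\varepsilon,i}<e^{\varepsilon}\g_i$ and, more strongly, close to $\g_i$ in $C^2$ with error controlled by $\varepsilon$. Because the sectional curvature depends continuously on the $2$-jet of the metric, this $C^2$-closeness transfers the bounds of $\g_i$ up to an error absorbed into $\varepsilon$, yielding $\lambda-\varepsilon\le\Sec(\g_{\varepsilon,i})\le\Lambda+\varepsilon$ after shrinking the threshold if necessary. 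This already gives both displayed estimates for a nilpotent Killing structure.

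It remains to promote this $N$-structure to an $\F$-structure, and here I expect the main obstacle. Two ingredients should combine. Since the collapsing directions are tangent to the flat leaves of $\fol$, they are infinitesimally generated by local parallel (translational) Killing fields, so the sheaf of nilpotent Lie algebras underlying the CFG structure is \emph{abelian} and its local orbits are quotients of flat tori. The hypothesis that $\pi_1(M)$ is finite then controls the monodromy of this sheaf around loops in $M$: the holonomy takes values in a finite group, which is exactly the finite structure group permitted in the definition of an $\F$-structure, so the abelian sheaf integrates to a genuine $\F$-structure rather than to a properly nilpotent or infinitely twisted local model. The delicate point, on which I would spend the most care, is to identify the abelian structure produced abstractly by the CFG smoothing with the one coming from the flat-leaf collapse, and to verify that finiteness of $\pi_1(M)$ genuinely excludes a nonabelian nilpotent fibre; granting this, $\fol_{\varepsilon,i}$ is an $\F$-structure, $\g_{\varepsilon,i}$ is $\fol_{\varepsilon,i}$-invariant, and the proof is complete.
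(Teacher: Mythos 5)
Your overall route is the same as the paper's: the paper proves this corollary by feeding the collapsing sequence of \th\ref{MT: Collapsing general flat foliations} directly into the Cheeger--Fukaya--Gromov structure theory, i.e.\ into \th\ref{T: Existence of N structures on} together with Rong's refinement \th\ref{T: Rong bounds sectional curvature}. Two points in your execution need repair, though. First, the curvature estimate $\lambda-\varepsilon\leq \Sec(\g_{\varepsilon,i})\leq \Lambda+\varepsilon$ does \emph{not} follow from what \cite{CheegerFukayaGromov1992} provides: the invariant metric there is $C^1$-close to $\g_i$ (with bounds on the derivatives of its own curvature), not $C^2$-close, so your argument ``curvature depends continuously on the $2$-jet'' has nothing to act on. Closeness of the sectional curvatures is precisely the extra content of Rong's theorem (\th\ref{T: Rong bounds sectional curvature}), which is why the paper states it separately and cites it; your proof should do the same rather than attribute $C^2$-control to CFG. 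Second, your last paragraph attacks a problem that is not there: the corollary does not assert any relation between $\fol_{\varepsilon,i}$ and the original foliation $\fol$, so there is no need to match the CFG structure with the flat-leaf collapse or to argue abelianity of the nilpotent sheaf via the leaf geometry. The passage from $\Ns$-structures to $\F$-structures under finite fundamental group is a general cited fact (built into the statement of \th\ref{T: Existence of N structures on}, which under the finite-$\pi_1$ hypothesis outputs a pure $\F$-structure of positive rank), so the ``delicate point'' you leave unresolved is already settled by the citation and your monodromy sketch can be deleted.

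Two smaller remarks. The threshold in \th\ref{T: Existence of N structures on} is stated in terms of volume, and \th\ref{MT: Collapsing general flat foliations} gives $\Vol(M,\g_i)\to 0$ directly, so your conversion from volume decay to injectivity-radius decay is correct but superfluous (what is actually needed, and what neither you nor the paper spells out, is a fixed rescaling so that $|\Sec(\g_i)|\leq 1$ before applying the theorem; this is harmless since the curvature bound from \th\ref{MT: Collapsing general flat foliations} is uniform in $i$). Finally, both you and the paper pass from curvature bounds for $\g$ to curvature bounds for the $\g_i$ without proof --- Rong's theorem pinches $\Sec(\g_{\varepsilon,i})$ against $\min\Sec(\g_i)$ and $\max\Sec(\g_i)$, not against $\lambda$ and $\Lambda$ --- so your sentence ``we may assume $\lambda\le\Sec(\g_i)\le\Lambda$'' is exactly as informal as the statement being proved; it is not a defect relative to the paper, but it is the one step of substance that a fully rigorous write-up would have to supply.
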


\begin{remark}
If we remove the hypothesis of $M$ being simply-connected in \th\ref{MC: collapse controlled by torus action}, we recover similar conclusions, but instead of an $\F$-structure we get an $\Ns$-structure, which is a more general structure.
\end{remark}

Another general deformation procedure for actions by isometries by compact Lie groups was presented by Cheeger in \cite{Cheeger1973} (see also \cite{Mueter}), inspired by Berger's example of collapsing the unit round sphere $\Sp^3$ to a $2$-sphere $\Sp^2$ of radius $1/2$. The basic idea is to ``shrink'' the length of the vectors tangent to the orbits, while keeping the length of the vectors normal to the orbits unchanged and either preserving or increasing (depending on the geometry of the compact Lie group acting) the lower bound on the sectional curvature of the original manifold (see  \cite{MoullieWebpage} for a nice visual explanation). Due to this last property, this deformation technique has been used to show the existence of metrics with positive lower sectional or Ricci curvature (for example \cite{CavenaghiESilvaSperanca2023,LawsonYau1972,SearleSolorzanoWilhelm2015,Searle2023}).

An algebraic generalization of Lie groups and smooth Lie group actions are Lie grou\-poids, and Lie grou\-poid actions. Groupoids were introduced and named by Brant in \cite{Brandt1927}, and Lie groupoids were introduced as a tool in differential topology and geometry by Ehresmann \cite{EhresmannCompltes} by adding differentiable structures. Moreover there are strong connections between singular Riemannian foliations and Lie groupoid actions \cite{Debord2001,Moerdijk}. Namely for a regular Riemannian foliation there exists the so-called \emph{holonomy groupoid}, whose orbits are the leaves of the foliation \cite{Moerdijk}. For  a closed singular Riemannian foliation $(M,\fol,\g)$, it was proven in \cite{AlexandrinoInagakiStruchiner2018}, that for a fixed leaf $L\in \fol$  the so-called ``linearized foliation'' defined in a tubular neighborhood of $L$ is given by the orbits of a Lie groupoid action. The linearized foliation is determined by the holonomy of the leaf, and the biggest possible homogeneous foliation contained in the infinitesimal foliation at any point in the leaf. In \cite[p. 210]{Molino} orbit-like foliations were introduced. These are singular Riemannian foliations, for which the linearized foliations of all leaves agree with the foliation \cite{AlexandrinoInagakiStruchiner2018}. A large family of examples of orbit-like foliations are closed singular Riemannian foliations of codimension $1$.

In \cite{Corro2025} the author introduced a generalization of Cheeger deformations for proper Lie groupoid actions, such that under a technical hypothesis we can compute the evolution of the sectional curvature in an analogous fashion to the classical Cheeger deformations for compact Lie group actions by isometries.

\begin{theorem}\th\label{MT: Cheeger deformation collapses}
Consider $\G\rightrightarrows M$ a proper Lie groupoid acting on $\alpha\colon P\to M$ a submersion, and $\eta^{(1)}$ a Riemannian metric on $\G$ making it a Riemannian Lie groupoid. Let $\eta^P$ a left-$\G$-invariant Riemannian metric on $P$, and assume that for all $p\in M$ we have $\nu_p(L_p)\subset T_p \alpha^{-1}(\alpha(p))$. Then for the Cheeger deformation $(P,\eta_\varepsilon)$ we have for $v,w\in T_pP$ linearly independent vectors, with $v = X^\ast(p)+v^\perp$ and $w = Y^\ast(p)+w^\perp$ for some $x,y\in \kernel(D_{\1_{\alpha(p)}}s)$. Then
\begin{linenomath}
\begin{align}\label{EQ: Full curvature description}
\begin{split}
K_{\eta_\varepsilon}\Big(\Ch^{-1}_\varepsilon(p)&(v),\Ch^{-1}_\varepsilon(p)(w)\Big)  = K_{\eta^P}(v,w)+\varepsilon^3K_{\eta^{(1)}}\Big(\Sh(p)(v),\Sh(p)(w)\Big)\\
+&3\Big\|A_{h(p)(v)}h(p)(w)\Big\|^2_{\left(\frac{1}{\varepsilon}\eta^{(1)}+\eta^P\right)}\\
+&\Big\|\II_\varepsilon\big((v,-\varepsilon \Sh(p)(v)),(w,-\varepsilon\Sh(p)(w))\big)\Big\|^2_{\frac{1}{\varepsilon}\eta^{(1)}+\eta^P}\\
 -&\left(\frac{1}{\varepsilon}\eta^{(1)}+\eta^P\right)\Big(\II_\varepsilon\big((v,-\varepsilon\Sh(p)(v)),(v,-\varepsilon\Sh(p)(v))\big),\\
 &\II_\varepsilon\big((w,-\varepsilon\Sh(p)(w)),(w,-\varepsilon\Sh(p)(w))\big)\Big).
\end{split}
\end{align}
\end{linenomath}
where $\II_\varepsilon(\cdot,\cdot)$ is the second fundamental form of $(\G\times_M P,\widehat{\eta}_\varepsilon)\subset (\G\times P,(1/\varepsilon)\eta^{(1)}+\eta^{P})$, and $A$ is the $A$-tensor of the Riemannian submersion $\bar{t}\colon (\G\times_M P,\hat{\eta}_\varepsilon)\to (P,\eta_t)$.
\end{theorem}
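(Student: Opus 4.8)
The plan is to realize this groupoid Cheeger deformation, in exact analogy with Cheeger's classical construction for compact group actions, as a Riemannian submersion whose total space is an embedded submanifold of a (scaled) Riemannian product, and then to read off the curvature by chaining the two standard comparison formulas: O'Neill's formula for the submersion and the Gauss equation for the embedding. Concretely, $(P,\eta_\varepsilon)$ is the quotient of the fiber product $\G\times_M P$ under the diagonal $\G$-action, where $\G\times_M P$ carries the metric $\hat\eta_\varepsilon$ obtained by restricting the scaled product metric $\tfrac1\varepsilon\eta^{(1)}+\eta^P$ on $\G\times P$, and $\bar t\colon(\G\times_M P,\hat\eta_\varepsilon)\to(P,\eta_\varepsilon)$ is the orbit (multiplication) map. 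Thus the proof splits into two comparisons chained together: $K_{\eta_\varepsilon}$ on $P$ is compared with the curvature of $\G\times_M P$ via O'Neill, and the latter is compared with the curvature of the ambient product $\G\times P$ via Gauss. Since the ambient metric is a product, its curvature splits into the $P$-contribution $K_{\eta^P}(v,w)$ and a (scaled) $\G$-contribution, which is the source of the groupoid-curvature term.

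The first task is to make the construction Riemannian. Using that $\eta^{(1)}$ makes $\G$ a Riemannian Lie groupoid (so that source and target are compatible with the metric) and that $\eta^P$ is left-$\G$-invariant, I would check that the diagonal action on $(\G\times_M P,\hat\eta_\varepsilon)$ is isometric and proper, identify the quotient with $(P,\eta_\varepsilon)$, and verify that $\bar t$ is a Riemannian submersion; properness of $\G$ is what guarantees the quotient geometry behaves well. I would then carry out the pointwise linear algebra along the unit section: decompose $T_{(\1_{\alpha(p)},p)}(\G\times_M P)$ into the vertical space (tangent to the $\G$-orbit) and its horizontal complement, and identify, for $v\in T_pP$ written as $v=X^\ast(p)+v^\perp$, the horizontal lift $h(p)(v)$ together with its representative $(v,-\varepsilon\Sh(p)(v))$ inside $T(\G\times P)$. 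This simultaneously pins down the shift operator $\Sh(p)$ (the $\G$-component of the horizontal lift) and the reparametrization $\Ch_\varepsilon(p)$ relating $\eta_\varepsilon$ to $\eta^P$ that appears on the left-hand side. The technical hypothesis $\nu_p(L_p)\subset T_p\alpha^{-1}(\alpha(p))$ enters decisively here: it forces the directions normal to the leaf to be tangent to the $\alpha$-fibers, which is exactly what makes the horizontal distribution of $\bar t$ compatible with the embedding $\G\times_M P\hookrightarrow\G\times P$ and lets the lift decompose cleanly; without it the two comparison steps would not decouple and the $\Sh$/$\Ch_\varepsilon$ description would fail.

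With the Riemannian submersion and the embedding in place, I would apply O'Neill's formula to obtain $K_{\eta_\varepsilon}(v,w)=K_{\hat\eta_\varepsilon}(h(p)(v),h(p)(w))+3\|A_{h(p)(v)}h(p)(w)\|^2_{\frac1\varepsilon\eta^{(1)}+\eta^P}$, producing the $A$-tensor term, and then the Gauss equation to rewrite $K_{\hat\eta_\varepsilon}(h(p)(v),h(p)(w))$ as the ambient curvature of $(\G\times P,\tfrac1\varepsilon\eta^{(1)}+\eta^P)$ evaluated on the lifts $(v,-\varepsilon\Sh(p)(v)),(w,-\varepsilon\Sh(p)(w))$ plus the two second-fundamental-form terms $\II_\varepsilon$, whose precise signs are those dictated by the curvature and second-fundamental-form conventions fixed in the paper. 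The remaining step is routine: the ambient metric is a product, so its (unnormalized) curvature splits into the $P$-part, giving $K_{\eta^P}(v,w)$, and the $\G$-part, giving the curvature of $\eta^{(1)}$ rescaled by $\tfrac1\varepsilon$ and evaluated on $-\varepsilon\Sh(p)(v),-\varepsilon\Sh(p)(w)$. Collecting the scalings — a factor $\tfrac1\varepsilon$ from rescaling the groupoid metric and a factor $\varepsilon^4$ from the two $\varepsilon$-scaled arguments, the curvature being quadratic in each slot — yields exactly $\varepsilon^3K_{\eta^{(1)}}(\Sh(p)(v),\Sh(p)(w))$, and assembling the four pieces gives \eqref{EQ: Full curvature description}.

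I expect the embedding step to be the genuine obstacle, since it has no counterpart in the classical setting. When $\G$ is an honest Lie group acting on $P$ the base $M$ is a point, the fiber product $\G\times_M P$ is the full product $\G\times P$, the embedding is the identity, and $\II_\varepsilon\equiv0$; Cheeger's original curvature formula therefore contains only the submersion ($A$-tensor) correction. For a genuine groupoid, $\G\times_M P=\{(g,x): s(g)=\alpha(x)\}$ is a proper submanifold of positive codimension, and the main work is to control its second fundamental form $\II_\varepsilon$ — in particular to show that, restricted to the horizontal lifts, it is governed entirely by the data $(v,-\varepsilon\Sh(p)(v))$ and scales correctly in $\varepsilon$ — and to verify that the technical hypothesis is precisely the condition that keeps the O'Neill and Gauss computations compatible rather than merely simplifying them.
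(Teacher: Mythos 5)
Your proposal follows the paper's own route exactly: the paper likewise realizes $(P,\eta_\varepsilon)$ via the Riemannian submersion $\bar{t}\colon(\G\times_M P,\widehat{\eta}_\varepsilon)\to(P,\eta_\varepsilon)$, identifies $h(p)(v)$ as the horizontal lift of $\Ch^{-1}_\varepsilon(p)(v)$, and then chains O'Neill's formula with the Gauss equation for the embedding $\G\times_M P\subset\big(\G\times P,\tfrac{1}{\varepsilon}\eta^{(1)}+\eta^P\big)$, the product-metric splitting and the $\tfrac{1}{\varepsilon}\cdot\varepsilon^{4}=\varepsilon^{3}$ scaling producing the groupoid curvature term. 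Your reading of the technical hypothesis $\nu_p(L_p)\subset T_p\alpha^{-1}(\alpha(p))$ (it is what makes the lifts $(v,-\varepsilon\Sh(p)(v))$ genuinely tangent to the fiber product) and your observation that the Lie group case gives $\II_\varepsilon\equiv 0$, recovering the classical Cheeger formula, also coincide with the paper's remarks.
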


As mentioned above, around a fixed leaf of a closed singular Riemannian foliation there exists a subfoliation given by a Lie groupoid representation. In the case when the foliation is an orbit-like foliation, i.e. this subfoliation agrees with the foliation, then it is natural to consider the following problem.

\begin{problem}
Let $(M,\fol,\g)$ be a closed orbit-like singular Riemannian foliation. Can we construct a global deformation procedure which agrees with the local Cheeger deformations presented here, while keeping global control over the evolution of the sectional curvature?
\end{problem}

We point out that due to the existence of foliated partitions of unity (e.g. \cite{CorroFernandezPerales22}), we can always glue these deformations to obtain a global deformation, but losing control on the sectional curvature.

The previous problem is relevant in the case of closed singular Riemannian foliations of codimension $1$ on compact Riemannian manifolds. These manifolds have the  topology of a gluing of the disjoint union of two disk bundles with diffeomorphic boundaries, via a diffeomorphism of their boundary. It has been conjectured that manifolds with positive sectional curvature admit a closed singular Riemannian foliations of codimension $1$.

\begin{conjecture}[\cite{GonzalezAlvaroGuijarro2023,Grove2002}]\th\label{ConjectureGrove}
Let $(M,\g)$ be a simply connected Riemannian manifold with positive (non-negative) sectional curvature. Then there exists a codimension one singular Riemannian foliation  $\fol$  (possibly with respect to a different Riemannian metric)on $M$.
\end{conjecture}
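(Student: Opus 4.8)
The statement is a well-known open conjecture, so what follows is a research program rather than a proof; I indicate the route I would attempt and where it currently stalls. \emph{Step 1 (reformulation).} The plan is first to translate the existence of a codimension one singular Riemannian foliation $\fol$ on a compact simply connected $M$ into analytic and topological data. On a connected compact manifold the leaf space of such an $\fol$ is a one-dimensional Alexandrov space, hence either a circle or a closed interval; since the leaves are closed (hence compact), a circular leaf space would yield a surjection $\pi_1(M)\twoheadrightarrow\Z$, so simple connectivity forces the leaf space to be a closed interval $[0,\ell]$ with two singular leaves $B_-,B_+$ at its endpoints and regular level hypersurfaces in between. Equivalently, I would seek a transnormal function $f\colon M\to[0,\ell]$, that is, a smooth $f$ with $\|\nabla f\|^2=b\circ f$, whose extremal sets $f^{-1}(0)=B_-$ and $f^{-1}(\ell)=B_+$ are the focal submanifolds. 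Such an $f$ realizes $M$ as the union of the normal disk bundles of $B_-$ and $B_+$, glued along a common regular level hypersurface. Thus the conjecture reduces to producing, after possibly changing the metric, such a double disk bundle decomposition on a positively (nonnegatively) curved $M$ together with a compatible transnormal function.

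\emph{Step 2 (candidate singular leaves).} Next I would try to produce the focal submanifolds $B_\pm$. In every known positively curved example (for instance the rank one symmetric spaces, or the Eschenburg and Bazaikin biquotients) these arise as totally geodesic or soul-type submanifolds, frequently as the singular orbits of an isometric cohomogeneity one action. One natural sub-strategy is therefore the symmetry route: invoke Grove's symmetry program to first endow $M$ with an isometric action whose orbit space is an interval, so that the orbit decomposition is automatically the desired codimension one singular Riemannian foliation. The defect of this route is that cohomogeneity one positively curved manifolds are severely constrained by the Grove--Wilking--Ziller classification, so homogeneity cannot be expected in general and genuinely non-homogeneous foliations must be constructed directly.

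\emph{Step 3 (making the leaves equidistant).} Granting a candidate pair $B_\pm$, I would build $f$ from the distance to $B_-$ and deform the metric so that the metric tubes around $B_-$ and around $B_+$ match up to the midpoint, which is exactly the equidistance, i.e.\ transnormality, requirement. Here the machinery of this survey enters: near each singular leaf the linearized foliation is modeled on a proper Lie groupoid action, so I would apply the groupoid Cheeger deformation of \th\ref{MT: Cheeger deformation collapses} to straighten the normal geometry while keeping, through the explicit curvature identity \eqref{EQ: Full curvature description}, control of $\Sec(\g)$; positivity of the groupoid correction term is what would preserve a lower curvature bound along the deformation.

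\emph{Step 4 (global gluing) and the main obstacle.} The decisive difficulty is global: one must fuse the two locally deformed, equidistant structures into a single globally transnormal $f$ while retaining positive curvature. As noted just before the conjecture, foliated partitions of unity always yield a global deformation, but gluing this way destroys both equidistance and the curvature estimate; reconciling the local Cheeger deformations into a curvature controlled global one is precisely the open Problem stated above, and it is a genuine bottleneck. Worse, even the purely topological input of Step 1 is currently unavailable: whether every simply connected positively curved manifold admits a double disk bundle decomposition is itself an open question of Gonz\'alez-\'Alvaro and Guijarro. Consequently I expect the main obstacle to lie not in the metric deformation of Steps 3--4 but already in Step 1: without an independent construction of the focal submanifolds $B_\pm$ (equivalently, of a compatible transnormal function) there is no object on which to run the deformation, and no present technique produces these on an arbitrary positively curved manifold.
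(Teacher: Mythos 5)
This statement is an open conjecture (due to Grove, and in the double disk bundle formulation to Gonz\'alez-\'Alvaro--Guijarro), and the paper offers no proof of it---it is stated only to motivate the discussion and the open Problems of Section~\ref{S: Connection between Singular Riemannian foliations and Lie groupoid actions}---so there is no proof to measure your attempt against. You correctly treat it as open, and the program you sketch (leaf space forced to be an interval by simple connectivity, the resulting double disk bundle decomposition, local groupoid Cheeger deformations near the singular leaves, with the curvature-controlled global gluing as the bottleneck) is essentially the same circle of ideas the paper itself develops around this conjecture and isolates as open Problems.
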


We present the double disk decomposition of a codimension one closed singular Riemannian foliation on a compact manifold in Section~\ref{S: Connection between Singular Riemannian foliations and Lie groupoid actions}, and what is a necessary condition get a global Cheeger deformation for these type of singular Riemannian foliations. 


This manuscript is organized as follows. In Section~\ref{S: Classical deformations} we present the Gromov-Hausdorff topology, as well as two classical notions of geometric deformations: Cheeger deformations for compact Lie group actions by isometries, and collapsing with bounded curvature as well as $\F$-structures. In Section~\ref{Singular Riemannian foliations} we introduce singular Riemannian foliations. In Section~\ref{S: Collapse of singular Riemannian foliations with flat leaves} we present proofs of \th\ref{T: Collapse of regular foliation} and \th\ref{MT: Collapsing general flat foliations}. Then in Section~\ref{S: Lie groupoid actions} we present the concepts of Lie group actions and their geometric properties needed to present in Section~\ref{S: Cheeger like deformation for Lie groupoids actions} generalized Cheeger deformations, and the results connected to \th\ref{MT: Cheeger deformation collapses}.


\section*{Acknowledgements}
The author thanks Marcos M. Alexandrino, Fernando Galaz-Garc\'{i}a, Karsten Grove, John Harvey, Alexander Lytchak, Jes\'{u}s N\'{u}\~{n}ez-Zimbr\'{o}n, Jaime Santos, Ilohann Speran\c{c}a, Wilderich Tuschmann and Marco Zambon for useful conversations.


\section{Classical deformations}\label{S: Classical deformations}

In this section we present classical deformations for torus and compact Lie group actions by isometries.

\subsection{Gromov-Hausdorff distance and collapse}

We begin by considering first $(X,d_X)$ and $(Y,d_Y)$ metric spaces. We define the \emph{Gromov-Hausdorff distance} between $(X,d_X)$ and $(Y,d_Y)$, denoted by 
\[
d_{\mathrm{GH}}((X,d_X),(Y,d_Y)),
\] 
to be the infimum  of all Hausdorff distances $d_H(f(X),g(Y))$ for all metric spaces $(Z,d_Z)$  and all isometric embeddings $f\colon X\to Z$ and $g\colon Y\to Z$ (see  \cite[Section 7.3]{BuragoBuragoIvanov}). Recall that
\begin{linenomath}
\begin{align*}
d_{\mathrm{H}}(f(X),g(Y)):= \max\Big\{&\sup\big\{\inf\{d_Z\big(f(x),g(y)\big)\mid y\in Y\}\,\big|\, x\in X\big\},\\
&\sup\big\{\inf\{d_Z\big(f(x),g(y)\big)\mid x\in Y\}\,\big|\, y\in X\big\}\Big\}.
\end{align*}
\end{linenomath}

For compact metric spaces we have the following characterization of the Gromov-Hausdorff distance: given $\varepsilon >0$ we say that a subset $S\subset X$ is an \emph{$\varepsilon$-net} if we have that $d_X(x,S) = \inf\{d_X(x,s)\mid s\in S\}\leq \varepsilon$. Given $X$, $Y$ two compact metric spaces, and $\varepsilon,\delta>0$, we say that \emph{$X$ and $Y$ are $(\varepsilon,\delta)$-approximations of each other} if there exists $\{x_i\}_{i=1}^N\subset X$, $\{y_i\}_{i=1}^N\subset Y$ such that 
\begin{enumerate}
\item The sets $\{x_i\}_{i=1}^N$, $\{y_i\}_{i=1}^N$ are $\varepsilon$-nets,
\item $|d_X(x_i,x_j)-d_Y(y_i,y_j)|<\delta$ for all $i,j\in \{1,\ldots,N\}$.
\end{enumerate}
In the case when $\varepsilon= \delta$, we say that \emph{$X$ and $Y$ are $\varepsilon$-approximations of each other}. With this we can write the characterization of convergence in the Gromov-Hausdorff sense for compact spaces.
\begin{proposition}[Proposition 7.4.11 in \cite{BuragoBuragoIvanov}]\th\label{P: characterization of GH convergence via approximations}
Let $X$ and $Y$ be compact metric spaces. Then we have
\begin{enumerate}[(1)]
\item If $Y$ is an $(\varepsilon,\delta)$ approximation of $X$, then $d_{\mathrm{GH}}(X,Y)< 2\varepsilon+\delta$.
\item If $d_{\mathrm{GH}}(X,Y)<\varepsilon$, then $Y$ is a $5\varepsilon$-approximation of $X$.
\end{enumerate}
\end{proposition}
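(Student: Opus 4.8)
The plan is to prove the two implications separately, since each corresponds to one of the two ways of comparing $X$ and $Y$: realizing them isometrically inside a common ambient space, and reading off combinatorial net data from such a realization.

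For part (1), suppose $\{x_i\}_{i=1}^N$ and $\{y_i\}_{i=1}^N$ witness that $Y$ is an $(\varepsilon,\delta)$-approximation of $X$. I would build an admissible metric $d$ on the disjoint union $W=X\sqcup Y$ extending $d_X$ and $d_Y$, by declaring the cross distances to be
\[
d(x,y)=\frac{\delta}{2}+\min_{1\le i\le N}\big(d_X(x,x_i)+d_Y(y_i,y)\big),\qquad x\in X,\ y\in Y.
\]
The offset $\delta/2$ is forced: it is exactly what is needed to absorb the distortion bound $|d_X(x_i,x_j)-d_Y(y_i,y_j)|<\delta$ when checking the triangle inequality. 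Concretely, for $x,x'\in X$ and $y\in Y$ one estimates $d_X(x,x')$ by inserting the minimizing indices $a,b$ for $d(x,y)$ and $d(x',y)$, passing through $x_a,x_b$ and then $y_a,y_b$, and using the single distortion step $d_X(x_a,x_b)\le d_Y(y_a,y_b)+\delta$; the two offsets $\delta/2$ then cover this $\delta$. The remaining triangle inequalities (two points in one factor, one in the other) follow more directly from the triangle inequalities of $d_X$ and $d_Y$. Since $d$ restricts to $d_X$ and $d_Y$, the inclusions $X,Y\hookrightarrow(W,d)$ are isometric embeddings into a common space, so $d_{\mathrm{GH}}(X,Y)\le d_{\mathrm{H}}(X,Y)$ computed in $(W,d)$. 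Finally, because $\{x_i\}$ and $\{y_i\}$ are $\varepsilon$-nets, every $x\in X$ lies within $\varepsilon$ of some $x_i$, hence $d(x,y_i)\le\varepsilon+\delta/2$, and symmetrically for points of $Y$; thus $d_{\mathrm{H}}(X,Y)\le\varepsilon+\delta/2$, which is well within the claimed bound $2\varepsilon+\delta$.

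For part (2), suppose $d_{\mathrm{GH}}(X,Y)<\varepsilon$. By definition of the infimum there exist a metric space $(Z,d_Z)$ and isometric embeddings of $X$ and $Y$ with $d_{\mathrm{H}}(X,Y)<\varepsilon$, and I would identify $X$ and $Y$ with their images in $Z$. Choose a finite $\varepsilon$-net $\{x_i\}_{i=1}^N$ of $X$ (which exists by compactness), and for each $i$ pick $y_i\in Y$ with $d_Z(x_i,y_i)<\varepsilon$, possible since $d_{\mathrm{H}}(X,Y)<\varepsilon$; this produces two indexed families of equal cardinality. Then $\{x_i\}$ is an $\varepsilon$-net by construction, while any $y\in Y$ is within $\varepsilon$ of some $x\in X$, which is within $\varepsilon$ of some $x_i$, which is within $\varepsilon$ of $y_i$, so $\{y_i\}$ is a $3\varepsilon$-net of $Y$. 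For the distortion, the triangle inequality in $Z$ gives $|d_X(x_i,x_j)-d_Y(y_i,y_j)|=|d_Z(x_i,x_j)-d_Z(y_i,y_j)|\le d_Z(x_i,y_i)+d_Z(x_j,y_j)<2\varepsilon$. Since $\varepsilon$-nets and $3\varepsilon$-nets are in particular $5\varepsilon$-nets and $2\varepsilon<5\varepsilon$, these families exhibit $Y$ as a $5\varepsilon$-approximation of $X$.

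The main obstacle is the verification of the triangle inequality for the glued metric in part (1); everything else is bookkeeping with the net and distortion inequalities. The delicate point is the precise size of the offset on the cross distances: too small and the inequality fails by up to $\delta$, while the choice $\delta/2$ is exactly balanced against the distortion. One should also note the harmless degenerate case $\delta=0$, where $d$ is only a pseudometric and one passes to the associated metric quotient (equivalently, takes the offset to be an arbitrarily small positive number). I would remark that this argument in fact yields the sharper constants $\varepsilon+\delta/2$ and $3\varepsilon$, which a fortiori give the stated bounds $2\varepsilon+\delta$ and $5\varepsilon$.
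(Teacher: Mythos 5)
Your proof is correct, and it is essentially the argument of Proposition 7.4.11 in Burago--Burago--Ivanov, which the paper cites in lieu of a proof: part (1) via the glued (pseudo)metric on $X\sqcup Y$ with cross-distance offset $\delta/2$, and part (2) by selecting near-partners of a finite $\varepsilon$-net inside a common ambient space. Your verification of the triangle inequality, the handling of the degenerate case $\delta=0$, and the sharper constants $\varepsilon+\delta/2$ and $3\varepsilon$ are all sound, so nothing is missing.
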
 

Given a smooth compact manifold $M$, we say that a sequence of Riemannian metrics $\{\g_n\}_{n\in \N}$ on $M$ \emph{collapses} if for any $p\in M$ the injectivity radius of $\g_n$ at $p$ goes to $0$ as $n\to \infty$. 

It is not difficult to give examples of collapsing: We can consider any compact Riemannian manifold $(M,\g)$ with diameter $D$ and set $\g_n:=\frac{1}{n}\g$. Then we have that $\diam(\g_n)= \frac{1}{\sqrt{n}}D$. Since the diameter of $\g_n$ goes to $0$ as $n\to \infty$, then the injectivity radius also goes to $0$. In this particular case the sequence $\{(M,\g_n)\}_{n\in\N}$ is converging in the Gromov-Hausdorff sense to the metric space $\{\ast\}$ consisting of only a point. Moreover, we have that the sectional curvature of $\g_n$ is $\Sec(\g_n)=n\Sec(\g)$; thus unless $(M,g)$ is  a flat manifold, we have that $|\Sec(\g_n)|$ goes to $+\infty$. This means that in some sense the geometry of $\g$ is being ``blown up''.

Thus it is of interest to give procedures or characterizations of collapse while keeping some control over the curvature of the manifold. We say that a Riemannian manifold $(M,\g)$ \emph{collapses with bounded curvature} if there exists a collapsing sequence of Riemannian metrics $\{\g_n\}_{n\in\N}$ with $\g_1=\g$ and there exists $\Lambda\geq 0$ such that  $|\Sec(\g_n)| \leq \Lambda$ for all $n\in\N$.

\subsection{Riemannian submersions and Cheeger deformations}\label{S: Cheeger deformations}

We start by presenting the classical theory of Cheeger deformations, and of Riemannian submersions.

Let $M$ be a  smooth manifold, and $G$ a compact Lie group. A smooth map $\mu\colon H\times M\to M$ is called a \emph{smooth action of $G$ on $M$} if the following hold:
\begin{enumerate}[(i)]
    \item For $e\in G$ the identity element, and any $p\in M$ we have $\mu(e,p) = p$. 
    \item For any $g_1,g_2\in G$ and any $p\in M$ we have $\mu(g_1,\mu(g_2,p)) = \mu(g_1 g_2, p)$.
\end{enumerate}
Given a fixed point $p \in M$, the \emph{orbit through $p$} is the set $G(p) = \{\mu(g,p)\mid h\in G\}$. The \emph{isotropy subgroup} at $p$ is the subgroup of elements of $G$ that fix $p$, i.e.  $G_p = \{g\in G\mid \mu(g,p) = p\}$. When  the map $\tilde{\mu}\colon G\times M\to M\times M$ given by $\tilde{\mu}(g,p) = (p,\mu(g,p))$ is a proper map, i.e.  the preimage of a compact set under $\tilde{\mu}$ is compact, we say that the action $\mu$ is a \emph{proper action}. For a proper action  the isotropy groups $G_p$ are closed subsets of $G$. Moreover, for a proper action we have that any orbit $G(p)$ is diffeomorphic to $G/G_p$.

Given a Riemannian metric $\g$ on $M$, we say that the (smooth) action $\mu\colon G\times M\to M$ is by isometries if for any $h\in G$ we have that the diffeomorphism $\mu_h\colon M\to M$ given by $\mu_h(p) = \mu(h,p)$ is an isometry of $(M,\g)$, i.e. $(\mu_h)^\ast(\g) = \g$. In particular the partition of $M$ by the orbits of $G$, i.e. $\fol_G = \{G(p)\mid p\in M\}$, is such that the orbits are locally equidistant. We refer to the quotient space $M/G$ consisting of all orbits of the action of $G$, equipped with the quotient topology as the \emph{orbit space of the action}. 

Given a smooth action $\mu\colon G\times M\to M$, we can define a smooth action $\overline{\mu}\colon G\times (G\times M)\to G\times M$  as follows:
\[
    \overline{\mu}(g_1,(g_2,p)) = (g_1 g_2,\mu(g_1,p)).
\]

We point out that the action $\overline{\mu}$ is free: fix $(g,p)\in G\times M$ and assume that there exists $\tilde{g}\in G$ such that  $\overline{\mu}(\tilde{g},(g,p)) = (g,p)$. Then we have that $\tilde{g}g = g$. This implies that $\tilde{g} = e$. Moreover, it can be proven that the map $\tilde{\phi}\colon (G\times M)\to M$ defined as
\[
	\tilde{\phi}(g,p) = \mu(g^{-1},p)
\]
is a submersion. For $p\in M$ fixed, we have the following description of the fiber $\tilde{\phi}^{-1}(p) = \{(g,\mu(g,p)\mid g\in G\} = G(e,p)$. 

Recall that given a smooth submersion $f\colon M\to N$ and a Riemannian metric $\g$ on $M$, $f$ is called an \emph{Riemannian submersion} if the Lie derivative of $\g$ in the directions tangent to the fibers of $f$ is zero. That is, the fibers are locally equidistant. Recall that given a Riemannian submersion $f\colon (M,\g)\to N$ we obtain a new Riemannian metric $\hh$ on $N$ \cite{GromollWalschap}.

Assume now that $G$ acts by isometries on $(M,\g)$, and that $G$ admits a bi-invariant metric $\QQ$. We consider $(G\times M, \frac{1}{t} \QQ+\g)$ and observe that for each $t>0$ the metric  $\frac{1}{t}\QQ+\g$ is $G$-invariant. Observe that for $(g,p)\in G\times M$ we have that $G(g,p) = G(e,g^{-1}p) = G(e,p')$. Thus, we conclude that for any $t>0$, the submersion $\tilde{\phi}\colon (G\times M, \frac{1}{t}\QQ+\g)\to M$ is a Riemannian submersion. 

Thus we obtain a family $\{\g_t\}_{t>0}$ of Riemannian metrics on $M$, such that 
\[
\tilde{\phi}\colon \left(G\times M,\frac{1}{t}\QQ+\g\right)\to (M,\g_t)
\]
is a Riemannian submersion. We call $\g_t$ the \emph{Cheeger deformation metric} of $\g$.

\begin{remark}
Observe that the construction of a Cheeger deformation depends on the choice of the bi-invariant metric $\QQ$. But the general ``geometric properties'' of $\QQ$ do not depend on the choice made, since the moduli space of bi-invariant metric on a compact Lie group is contractible \cite{FloresTorres2024}.
\end{remark}

Given a vector $x\in T_e G \cong \mathfrak{g}$ we define the \emph{action vector field $X^\ast(p)\in T_p M$ of $x$ at $p\in M$} as
\[
X^\ast(p):=D_{(e,p)}\mu(X,0).
\] 
With this, for each $p\in M$ we define a symmetric endomorphism $\Sh(p)\colon T_e G\to T_e G$, called the \emph{shape tensor} (or \emph{orbit tensor}) as the operator that satisfies the following  identity:
\[
	\QQ(\Sh(p)(x),y) = \g(X^\ast(p),Y^\ast(p))\quad \mbox{for all } y\in T_e G.
\]
Given an action of $G$ by isometries on $(M,\g)$, for $p\in M$ we denote by $\nu_p (G(p))$ the normal bundle to the orbits. That is, all tangent directions of points in the orbit which are $\g$-perpendicular to the directions tangent to the orbit. 

Given $t>0$ and $p\in M$, we define the \emph{Cheeger tensor} $\Ch_t(p)\colon T_pM\to T_pM$ as follows: Given any vector $v\in T_p M$, there exists $x\in T_e G$ and $\xi\in \nu_p(G(p))$ such that $v= X^\ast(p)+\xi$. Then we set
\[
\Ch_t(p)(v) =\Big((\Id+t\Sh(p))^{-1}(x)\Big)^\ast(p)+\xi.
\]
This is an invertible tensor, and moreover by \cite[Satz 3.3]{Mueter} we have that for any $t\geq 0$ and $v,w\in T_p M$ the following holds:
\[
\g_t(v,w) = \g(\Ch_t(p)(v),w).
\]

Recall that given a Riemannian submersion $f\colon (M,\g)\to (N,\hh)$ we can define two distributions on $M$. The the \emph{vertical distribution}  $\ver = \kernel\, D f$ and the \emph{horizontal distribution} $\hor = \kernel\, D\pi^\perp \subset TM$, i.e. the $\g$-perpendicular distribution to $\ver$. Observe that $TM = \ver\oplus \hor$. Given a vector field $X\in \mathfrak{X}(M)$, we denote  the \emph{vertical component} by  $X^v\in \ver$, and  the \emph{horizontal component} by $X^h\in \hor$.

Given a Riemannian submersion $\pi\colon (M,\g)\to (N,\hh)$ we have the following relationships between the sectional curvature of the metrics $\g$ and $\hh$.

\begin{thm}[O'Neill's formula \cite{ONeill1966}, see Section 1.5 in \cite{GromollWalschap}]\th\label{T: ONeills formula}
Let $f\colon (M,\g)\to (N,\hh)$ be a Riemannian submersion. Then for any linearly independent $X,Y\in \mathfrak{X}(N)$ it holds
\begin{linenomath}
\begin{align*}
K(\hh)(X,Y) =& K(\g)(\widetilde{X},\widetilde{Y})+3\Big\|A_{\widetilde{X}}\widetilde{Y}\Big\|^2_{\g}\\
&=K(\g)(\widetilde{X},\widetilde{Y})+\frac{3}{4}\Big\|[\widetilde{X},\widetilde{Y}]^v\Big\|^2_{\g}.
\end{align*}
\end{linenomath}
Here $\widetilde{X},\widetilde{Y}\in \hor$ are the unique horizontal lifts of $X,Y$, and 
\[
K(\hh)(X,Y) := \hh(R_{\hh}(X,Y)Y,X),
\]
where $R_{\hh}$ is the Riemannian curvature tensor of $\hh$ (analogous for $\g$).
\end{thm}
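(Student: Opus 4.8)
The plan is to reduce the statement to the behaviour of the Levi-Civita connection $\nabla$ of $\g$ with respect to the splitting $TM=\hor\oplus\ver$, evaluated on \emph{basic} horizontal lifts. Given $X,Y\in\mathfrak{X}(N)$ with lifts $\widetilde X,\widetilde Y\in\hor$, I would first establish the two structural facts about $\nabla$ on basic fields, both of which follow from the Koszul formula together with the submersion hypothesis (the inner product of two basic fields is constant along the fibres, and $[\widetilde X,U]\in\ver$ whenever $U\in\ver$): namely that the horizontal part of $\nabla_{\widetilde X}\widetilde Y$ descends to the base,
\[
(\nabla_{\widetilde X}\widetilde Y)^h=\widetilde{\nabla^N_{X}Y},
\]
where $\nabla^N$ is the Levi-Civita connection of $\hh$, while its vertical part defines the \emph{$A$-tensor} $A_{\widetilde X}\widetilde Y:=(\nabla_{\widetilde X}\widetilde Y)^v$.

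Next I would record the two properties of $A$ that drive the computation. Since $\nabla$ is torsion free, $A_{\widetilde X}\widetilde Y-A_{\widetilde Y}\widetilde X=[\widetilde X,\widetilde Y]^v$, and a short Koszul computation shows $A_{\widetilde X}\widetilde Y+A_{\widetilde Y}\widetilde X=0$; hence $A$ is alternating in its horizontal arguments and
\[
A_{\widetilde X}\widetilde Y=\tfrac12[\widetilde X,\widetilde Y]^v.
\]
In particular $\|A_{\widetilde X}\widetilde Y\|_\g^2=\tfrac14\|[\widetilde X,\widetilde Y]^v\|_\g^2$, which already gives the equality $3\|A_{\widetilde X}\widetilde Y\|_\g^2=\tfrac34\|[\widetilde X,\widetilde Y]^v\|_\g^2$ between the two right-hand sides in the statement. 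I would also use, for $U\in\ver$, the skew-adjointness $\g(A_{\widetilde X}\widetilde Y,U)=-\g(\widetilde Y,A_{\widetilde X}U)$, which comes from differentiating $\g(\widetilde Y,U)=0$ and noting that $A_{\widetilde X}$ interchanges $\hor$ and $\ver$.

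With these identities available, the decisive step is to expand $R_\g$ on horizontal arguments and to sort every term by its $\hor$- and $\ver$-component. Concretely I would establish the horizontal O'Neill equation, valid for all horizontal lifts,
\begin{linenomath}
\begin{align*}
\g\big(R_\g(\widetilde X,\widetilde Y)\widetilde Z,\widetilde H\big)
= \hh\big(R_\hh(X,Y)Z,H\big)\circ f
&+2\,\g\big(A_{\widetilde X}\widetilde Y,A_{\widetilde Z}\widetilde H\big)\\
&-\g\big(A_{\widetilde Y}\widetilde Z,A_{\widetilde X}\widetilde H\big)
+\g\big(A_{\widetilde X}\widetilde Z,A_{\widetilde Y}\widetilde H\big),
\end{align*}
\end{linenomath}
by substituting $\nabla_{\widetilde X}\widetilde Y=\widetilde{\nabla^N_{X}Y}+A_{\widetilde X}\widetilde Y$ into $R_\g(\widetilde X,\widetilde Y)\widetilde Z=\nabla_{\widetilde X}\nabla_{\widetilde Y}\widetilde Z-\nabla_{\widetilde Y}\nabla_{\widetilde X}\widetilde Z-\nabla_{[\widetilde X,\widetilde Y]}\widetilde Z$, reassembling the purely horizontal contributions into the pulled-back base curvature by the first step, and reducing the remaining vertical interactions using the skew-adjointness above; the splitting $[\widetilde X,\widetilde Y]=[\widetilde X,\widetilde Y]^h+2A_{\widetilde X}\widetilde Y$ is what feeds the curvature correction into the last covariant derivative. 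Specialising $\widetilde Z=\widetilde Y$, $\widetilde H=\widetilde X$ and using $A_{\widetilde Y}\widetilde Y=0$ together with $A_{\widetilde Y}\widetilde X=-A_{\widetilde X}\widetilde Y$ collapses the three $A$-terms to $-3\|A_{\widetilde X}\widetilde Y\|_\g^2$, which rearranges to $K(\hh)(X,Y)=K(\g)(\widetilde X,\widetilde Y)+3\|A_{\widetilde X}\widetilde Y\|_\g^2$.

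The main obstacle will be the bookkeeping in this last step: one must track precisely which covariant derivatives produce a vertical component (only these pair nontrivially against $A$) and apply the skew-adjointness identity with the correct sign, so that the two cross terms and the vertical half of the bracket term reinforce into a single coefficient $3$ rather than cancelling. Passing through the horizontal structure equation before specialising is the cleanest way to keep the signs under control, and the Hopf fibration $\Sp^3\to\Sp^2$, where the sectional curvature jumps from $1$ to $4$, serves as a sanity check that fixes the direction of the inequality.
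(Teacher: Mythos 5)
The paper does not actually prove this statement; it is quoted as a classical result with pointers to O'Neill and to Section 1.5 of Gromoll--Walschap, and your argument is precisely the standard proof found in those sources: the splitting of $\nabla$ on basic fields, the identity $A_{\widetilde X}\widetilde Y=\tfrac12[\widetilde X,\widetilde Y]^v$ together with skew-adjointness of $A_{\widetilde X}$, the horizontal curvature (structure) equation, and the specialisation $\widetilde Z=\widetilde Y$, $\widetilde H=\widetilde X$. Your sign bookkeeping is consistent with the paper's convention $K(\hh)(X,Y)=\hh(R_{\hh}(X,Y)Y,X)$: in the specialisation the three $A$-terms collapse to $-3\|A_{\widetilde X}\widetilde Y\|_{\g}^2$ on the total-space side, which rearranges to the claimed $+3\|A_{\widetilde X}\widetilde Y\|_{\g}^2$, and the alternative form $\tfrac34\big\|[\widetilde X,\widetilde Y]^v\big\|_{\g}^2$ follows immediately from $A_{\widetilde X}\widetilde Y=\tfrac12[\widetilde X,\widetilde Y]^v$.
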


Applying this result, we obtain the following expression for the sectional curvature of $\g_t$.

\begin{thm}[Section~3.d in \cite{Mueter}]\th\label{T: Classical cheeger deformation classical}
Consider $(M,\g)$ a Riemannian, and $G$ a Lie group acting smoothly and effectively by isometries on $M$. We consider $\QQ$ a bi-invariant metric on $G$. Then for the Cheeger deformation metric $\g_t$, we have for any $v=X^\ast(p)+\xi$, $w=Y^\ast(p)+\zeta\in T_pM$ the following
\begin{linenomath}
\begin{align}\label{EQ: Cheeger deformation curvature}
\begin{split}
K(\g_t)\Big(\Ch_t^{-1}(p)(v),\Ch^{-1}_t(p)(w)\Big) =& t^3 K(\QQ)(\Sh(p)(x),\Sh(p)(y))+ K(\g)(v,w)\\
&+3\|A_{(t\Sh(p)(x),v)}(t\Sh(p)(y),w)\|^2_{\frac{1}{t}\QQ+\g}\\
=& \frac{t^3}{4}\Big\|[\Sh(p)(x),\Sh(p)(y)]\Big\|^2_\QQ+K(\g)(v,w)+\\ 
&+\frac{3}{4}\Big\|\big[(t\Sh(p)(x),v),(t\Sh(p)(y),w)\big]^v\Big\|^2_{\frac{1}{t}\QQ+\g}.
\end{split}
\end{align}
\end{linenomath}
\end{thm}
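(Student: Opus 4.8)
The plan is to recognise $\g_t$ as the base metric of the Riemannian submersion $\tilde{\phi}\colon(G\times M,\tfrac1t\QQ+\g)\to(M,\g_t)$ constructed above, and to feed this submersion into O'Neill's formula (\th\ref{T: ONeills formula}). Since O'Neill's identity is tensorial, it suffices to work pointwise, and by $G$-equivariance of $\tilde{\phi}$ I may place myself at a point of the form $(e,p)\in G\times M$. There $T_{(e,p)}(G\times M)\cong\mathfrak{g}\oplus T_pM$, and differentiating the orbit map of the auxiliary action $\overline{\mu}$ shows that the vertical space (the tangent to the fiber) is $\ver=\{(z,Z^\ast(p))\mid z\in\mathfrak{g}\}$. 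The whole computation then reduces to identifying the horizontal lifts of $\Ch^{-1}_t(p)(v)$ and $\Ch^{-1}_t(p)(w)$ and applying O'Neill.

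The key step, and the one I expect to be the main obstacle, is computing these horizontal lifts. A vector $(y,u)\in\mathfrak{g}\oplus T_pM$ is $\tfrac1t\QQ+\g$-orthogonal to every $(z,Z^\ast(p))$ precisely when $\tfrac1t\QQ(y,z)+\g(u,Z^\ast(p))=0$ for all $z\in\mathfrak{g}$; decomposing $u=U^\ast(p)+\omega$ with $U\in\mathfrak{g}$ and $\omega\in\nu_p(G(p))$ and using the defining relation $\g(U^\ast(p),Z^\ast(p))=\QQ(\Sh(p)(U),z)$ of the shape tensor turns this into the single linear condition $y=-t\,\Sh(p)(U)$. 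Computing $D_{(e,p)}\tilde{\phi}$ on such a horizontal vector then produces $\big((\Id+t\Sh(p))(U)\big)^\ast(p)+\omega$, which, matched against the definition of $\Ch_t(p)$, shows that the horizontal lift of $\Ch^{-1}_t(p)(v)$ is exactly $(t\,\Sh(p)(x),v)$ (the overall sign being immaterial in what follows), and similarly for $w$. This is the point at which the Cheeger tensor $\Ch_t$ and the orbit tensor $\Sh$ enter the formula.

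With the horizontal lifts in hand I would invoke O'Neill's formula to write $K(\g_t)\big(\Ch^{-1}_t(p)(v),\Ch^{-1}_t(p)(w)\big)$ as the curvature of the total space evaluated on $(t\Sh(p)(x),v)$ and $(t\Sh(p)(y),w)$, plus the term $3\big\|A_{(t\Sh(p)(x),v)}(t\Sh(p)(y),w)\big\|^2_{\frac1t\QQ+\g}$. Because $\tfrac1t\QQ+\g$ is a Riemannian product metric, its curvature tensor splits as a direct sum and all mixed planes are flat, so the total-space term is just $K_{\frac1t\QQ}(t\Sh(p)(x),t\Sh(p)(y))+K_\g(v,w)$. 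Here I would use that rescaling $\QQ$ by $\tfrac1t$ leaves the Levi-Civita connection, hence the $(1,3)$-curvature, unchanged while scaling the unnormalised sectional curvature by $\tfrac1t$; combined with the degree-four homogeneity of $K_\QQ$ in the two arguments $t\Sh(p)(x),t\Sh(p)(y)$, this produces exactly the coefficient $t^3$ in front of $K_\QQ(\Sh(p)(x),\Sh(p)(y))$.

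Finally I would translate both curvature pieces into their bracket form to recover the second displayed equality of \eqref{EQ: Cheeger deformation curvature}. For the bi-invariant factor this is the standard identity $K_\QQ(a,b)=\tfrac14\|[a,b]\|^2_\QQ$, giving the term $\tfrac{t^3}{4}\|[\Sh(p)(x),\Sh(p)(y)]\|^2_\QQ$, while for the $A$-tensor term I would use the second form of O'Neill's formula, $3\|A_{\widetilde X}\widetilde Y\|^2=\tfrac34\|[\widetilde X,\widetilde Y]^v\|^2$, applied to the horizontal lifts $(t\Sh(p)(x),v)$ and $(t\Sh(p)(y),w)$. Assembling these yields both expressions. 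The effectiveness hypothesis on the action is needed only so that $\Sh(p)$ and the orbit decomposition are well defined, and the compactness of $G$ is what guarantees the existence of the bi-invariant metric $\QQ$ underlying the entire construction.
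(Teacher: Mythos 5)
Your proposal is correct and takes essentially the same route as the paper: the paper obtains the formula precisely by applying O'Neill's formula (\th\ref{T: ONeills formula}) to the Riemannian submersion $\tilde{\phi}\colon (G\times M,\tfrac{1}{t}\QQ+\g)\to (M,\g_t)$, deferring to M\"uter's thesis exactly the details you supply (the vertical space $\{(z,Z^\ast(p))\}$, the identification of the horizontal lift of $\Ch_t^{-1}(p)(v)$ via the shape tensor, the product-metric curvature splitting with the $t^3$ rescaling, and the bi-invariant identity $K_\QQ(a,b)=\tfrac{1}{4}\|[a,b]\|^2_\QQ$). Your parenthetical about the sign is also sound: with the paper's convention $\tilde{\phi}(g,p)=\mu(g^{-1},p)$ the lift comes out as $(-t\Sh(p)(x),v)$ rather than $(t\Sh(p)(x),v)$, but the map $(g,p)\mapsto (g^{-1},p)$ is an isometry of $\tfrac{1}{t}\QQ+\g$ (inversion preserves bi-invariant metrics) intertwining the two sign conventions, so all terms in \eqref{EQ: Cheeger deformation curvature}, in particular the $A$-tensor norm, are unaffected.
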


\subsection{Collapse with bounded curvature \texorpdfstring{$\F$}{F}-structures}

Recall that a compact Riemannian manifold $(M,\g)$ \emph{collapses with bounded curvature}, if there is a collapsing sequence $\{\g_n\}_{n\in \N}$ and there exists $0\leq \Lambda\in \R$ such for all $n\in \N$ we have $|\Sec(\g_n)|\leq \Lambda$.

This notion of collapse was studied in \cite{CheegerGromov1986,CheegerGromov1990,CheegerFukayaGromov1992}. In \cite{CheegerGromov1986} $\F$-structures were introduced. They are a generalization of torus actions, and in \cite{CheegerGromov1986} it was proven that a compact manifold equipped with Riemannian metric that is invariant under the $\F$-structure collapses with bounded curvature. 

Given a compact manifold $M$, an \emph{$\F$-structure} is a partition $\fol$ of $M$ together with an open cover $\{U_i\}_{i\in \Lambda}$ of $M$, and for each $i\in \Lambda$, there exists a finite normal covering map $\pi_i\colon \widetilde{U}_i\to U_i$ with covering transformation group $G_i$, an effective smooth $T^{k_i}$-action on $\widetilde{U}_i$ such that the there exists a map $\psi_i\colon G_i\to \mathrm{Aut}(T^{k_i})$ with $g_i(\xi_i\cdot \tilde{p})) = \psi(g_i)(\xi_i)\cdot \tilde{p}_i$ for $g_i\in G_i$, $\xi_i\in T^{k_i}$, $\tilde{p}\in \widetilde{U}_i$, and the following compatibility conditions for $U_{ij}:= U_i\cap U_j\neq \varnothing$:

\begin{enumerate}[(i)]
\item The submanifold $\pi_i^{-1}(U_{ij})$ is $T^{k_i}$-invariant. 
\item For the pullback $V_{ij}$ given by the following diagram
\begin{center}
\begin{tikzcd}
 V_{ij} \arrow[r]\arrow[d] & \pi^{-1}_j(U_{ij}) \arrow[d]\\
  \pi^{-1}_i(U_{ij})\arrow[r] & U_{ij}
\end{tikzcd}
\end{center}
we have finite covers $\widetilde{T}^{k_i}$ of $T^{k_i}$ and $\widetilde{T}^{k_j}$ of $T^{k_j}$ acting effectively on $V_{ij}$ and commuting with each other.
\end{enumerate}

A \emph{polarization} $\Pp$ of an $\F$-structure consist of a collection of connected subgroups $H_i\subset T^{k_i}$ such that the following holds:

\begin{enumerate}[(i)]
\item $H_i$ is invariant by the action of $\psi_{i}(G_{i})$.
\item For $p\in U_{ij}$, for any lifts $\tilde{p}_i\in \widetilde{U}_{i}$, $\tilde{p}_j\in \widetilde{U}_{j}$  either the image $\pi_{i}(H_{i}(\tilde{p}_i))$ contains $\pi_{j}(H_{j}(\tilde{p}_j))$ or viceversa (observe that this does not depend of the choices of $\tilde{p}_{i}$ and $\tilde{p}_{j}$).
\item The action of $H_i$ is locally free, i.e. $\dim(\pi_{i}(H_i(\tilde{p})))=\dim(H_i(\tilde{p})) = \dim(H_{i})$.
\end{enumerate}

We say that a polarization $\Pp$ is a \emph{pure polarization} if $\dim(H_{i}) = \dim(H_{j})$, i.e. for any $p\in U_{ij}$ and for any lifts $\tilde{p}_i\in \widetilde{U}_{i}$, $\tilde{p}_j\in \widetilde{U}_{j}$ we have $\pi_{i}(H_{i}(\tilde{p}_i)) =\pi_{j}(H_{j}(\tilde{p}_j))$.

An $\F$-structure is \emph{pure} if for any $p\in U_{ij}$ and for any lifts $\tilde{p}_i\in \widetilde{U}_{i}$, $\tilde{p}_j\in \widetilde{U}_{j}$ we have $\pi_{i}(T^{k_{i}}(\tilde{p}_i)) =\pi_{j}(T^{k_{j}}(\tilde{p}_j))$. An $\F$-structure has \emph{positive dimension} if the dimension of all the $T^{k_i}$-orbits is positive (i.e. the $T^{k_i}$-actions do not have fixed points).

\begin{example}
Any manifold $M$ equipped with smooth effective torus action gives an $\F$-structure, which may not be polarized, but it is pure.
\end{example}

\begin{example}
Consider the action of $T^2$ on $\Sp^3\subset \C^2$, by 
\[
(e^{i\theta_1},e^{i\theta_1})\cdot (z_{1},z_{2}):= (e^{2\pi\theta_1}z_{1},e^{2\pi\theta_2}z_{2}).
\]
We can define a polarization as follows: we consider $\{U_i\}_{i\in \Lambda}$ and arbitrary open covering by $T^2$-invariant open subsets. We set $\widetilde{U}_{i}:=U_{i}$ and $H_i:=G_1\times G_2$, where $G_j= \{e\}$ if $U_i$ intersects the subsphere $\{z_{j}=0\}$. We point out that this polarization is not pure, since the dimension of $H_i$ depends on the point we are.
\end{example}

\begin{example}
For the previous example we can define a pure polarization by setting $H=\{(e^{i pt},e^{i qt})\mid t\in \R \}$, for $p,q\neq 0$. Observe that when the line in $\R^2$ passes through $(p,q)$ and the origin does not have a rational slope, then $H$ is dense in $T^2$.
\end{example}

\begin{example}
The Klein bottle can be obtained as two open Möbius bands, intersecting at tubular neighborhood of their boundary, and observe that the double cover of each Möbius band is a cylinder $I\times \Sp^1$. Moreover, the circle action by rotation on the circle factor on each cylinder induces a foliation by circles on the Möbius band. Over the intersection of the Möbius bands, the actions of the circles agree with the rotation of the cylinder. Thus we get an $\F$-structure, which is a pure $\F$-structure not given by a torus action.
\end{example}

Given $M$ a manifold equipped with an $\F$-structure, a Riemannian metric $\g$ is \emph{$\F$-invariant} if the $T^{k_i}$-actions are by isometries with respect to $\widetilde{\g}_i$, the $\pi_{i}$-lifts of $\g|_{U_i}$. Such a  metric always exists for an $\F$-structure by the following theorem.

\begin{thm}[Lemma 1.3 in \cite{CheegerGromov1986}]
Let $M$ be a smooth manifold equipped with an $\F$-structure. Then there exists an $\F$-invariant Riemannian metric $\g$ on $M$. 
\end{thm}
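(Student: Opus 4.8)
The plan is to build $\g$ by the classical averaging-and-gluing method used to produce metrics invariant under (local) compact group actions. First I would fix an arbitrary background metric $\g_0$ on $M$ (which exists by paracompactness) and refine $\{U_i\}_{i\in\Lambda}$ to a locally finite cover each of whose members carries a relatively compact, orbit-saturated support region. Such a refinement is possible because the $T^{k_i}$-action preserves $\tilde U_i$ and $\pi_i$ is onto $U_i$, so the saturation of a subset of $U_i$ under the projected orbits stays inside $U_i$; this bookkeeping is what keeps all later averages supported where they should be.

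Next, on each chart I would manufacture a locally invariant metric. Pull $\g_0|_{U_i}$ back to a metric $\tilde\g_{0,i}=\pi_i^\ast(\g_0|_{U_i})$ on $\tilde U_i$ and average it against normalized Haar measure over the $T^{k_i}$-action, obtaining a $T^{k_i}$-invariant metric $h_i$. Then average $h_i$ over the finite deck group $G_i$. The equivariance $g\circ L_\xi = L_{\psi_i(g)(\xi)}\circ g$ (where $L_\xi$ denotes the action of $\xi\in T^{k_i}$) gives, for $T^{k_i}$-invariant $h_i$,
\[
L_\xi^\ast(g^\ast h_i)=(g\circ L_\xi)^\ast h_i=g^\ast\big(L_{\psi_i(g)(\xi)}^\ast h_i\big)=g^\ast h_i,
\]
so the $G_i$-average of $h_i$ is still $T^{k_i}$-invariant and in addition $G_i$-invariant; hence it descends to a metric $\g_i$ on $U_i$ whose $\pi_i$-lift is $T^{k_i}$-invariant. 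The same two-step average applied to an ordinary locally finite partition of unity $\{\sigma_i\}$ subordinate to the refined cover produces functions that are $T^{k_i}$- and $G_i$-invariant and supported in $U_i$ (this is exactly where the saturated-support refinement is needed). I would then set $\g:=\sum_i\rho_i\,\g_i$, where $\{\rho_i\}$ is the renormalized averaged partition of unity.

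The main obstacle, and the only place the compatibility conditions are used, is checking that $\g$ is $\F$-invariant, i.e. that $\pi_j^\ast(\g|_{U_j})$ is $T^{k_j}$-invariant for each $j$. Lifting to $\tilde U_j$ gives
\[
\pi_j^\ast(\g|_{U_j})=\sum_{i:\,U_{ij}\neq\varnothing}(\rho_i\circ\pi_j)\,\pi_j^\ast(\g_i|_{U_{ij}}),
\]
so I need each coefficient $\rho_i\circ\pi_j$ and each summand $\pi_j^\ast(\g_i|_{U_{ij}})$ to be $T^{k_j}$-invariant. Both requirements are of the same nature: an object produced on $U_i$ by averaging over $T^{k_i}$ must in addition be $T^{k_j}$-invariant on the overlap. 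To secure this one passes to the common cover $V_{ij}$ of condition (ii), where finite covers $\tilde T^{k_i}$ and $\tilde T^{k_j}$ act and \emph{commute}; condition (i) makes $\pi_i^{-1}(U_{ij})$ into a $T^{k_i}$-invariant set, so the overlap averages are well defined, and commutativity guarantees that averaging additionally over $T^{k_j}$ does not destroy $T^{k_i}$-invariance and that the order of averages is immaterial.

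Concretely I would therefore strengthen the second step: on each $U_i$, average $\g_0$ (and $\sigma_i$) not only over $T^{k_i}$ and $G_i$ but, on each overlap $U_{ij}$, also over the commuting $\tilde T^{k_j}$, performing this over the locally finite family of charts meeting $U_i$; commutativity ensures each successive average preserves all previously gained invariances. Feeding the resulting metrics and the resulting $\F$-invariant $\rho_i$ into $\g=\sum_i\rho_i\g_i$ and restricting to an arbitrary chart $U_j$ then makes every term of the lifted sum $T^{k_j}$-invariant, whence $\g$ is $\F$-invariant. The remaining work is the routine verification that these commuting overlap-averages can be organized consistently over the locally finite cover, which is precisely the bookkeeping that conditions (i) and (ii) are designed to support.
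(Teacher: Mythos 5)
Your overall strategy --- Haar averaging over the local torus actions, a deck-group average twisted by $\psi_i$, and gluing with an invariant partition of unity, with commutativity on the overlaps preserving previously gained invariances --- is the right one; note that the paper itself does not prove this statement (it quotes Lemma 1.3 of \cite{CheegerGromov1986}), and the cited proof is indeed an averaging argument of this kind. Your computation that the $G_i$-average of a $T^{k_i}$-invariant metric stays $T^{k_i}$-invariant is correct, as is your identification of the overlaps as the only obstruction to $\F$-invariance of the glued metric.

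The genuine gap is in your ``strengthened second step''. The action of $\widetilde{T}^{k_j}$ is defined only over $U_{ij}$ (on $V_{ij}$), so the $\widetilde{T}^{k_j}$-average of $\g_i$ (or of $\sigma_i$) is defined only over $U_{ij}$, and it does not agree with the unaveraged tensor near the frontier $\partial U_{ij}\cap U_i$. Hence the object you call $\g_i$ after overlap-averaging is in general not even continuous on $U_i$: it has been modified on $U_{ij}$ and left unchanged on $U_i\setminus U_{ij}$. The standard repair is to interpolate, $\g_i' = (1-\chi)\,\g_i + \chi\,\mathrm{Avg}_{\widetilde{T}^{k_j}}(\g_i)$, where $\chi$ must be constant on the orbits of the partition $\fol$ (otherwise the interpolation itself destroys the $T^{k_i}$-invariance you already gained) and supported in $U_{ij}$. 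But then $\g_i'$ is $T^{k_j}$-invariant only where $\chi\equiv 1$, and a cutoff supported in $U_{ij}$ cannot be identically $1$ on all of $\mathrm{supp}\,\rho_i\cap U_j$, since the closure of that set may meet $\partial U_j$. So after the repair the glued metric is invariant only over a saturated shrinking $W_j\Subset U_j$ of each chart, not over $U_j$ itself as the definition of $\F$-invariance used in this paper demands; one must then pass to the shrunk atlas $\{W_j\}$ and verify that it presents the same $\F$-structure (same partition, same tori, same compatibilities). This step, together with the construction of smooth functions constant on the orbits of $\fol$ for the partition of unity (your chart-by-chart averaging of $\sigma_i$ runs into the same discontinuity problem), is precisely the non-routine content of the lemma, and it is missing from your argument as written.
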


Moreover, the quotient space $M^\ast$ of the partition induced by the $\F$-structure admits a metric $d^\ast$ induced by $\g$, making the quotient map $\pi\colon M\to M^\ast$ into a \emph{submetry}, i.e. for any $\varepsilon>0$ and $p\in M$ we have $\pi(B_{\varepsilon}(p))=B_{\varepsilon}(\pi(p))$.

In \cite{CheegerGromov1986} it was shown that the presence of $\F$-structure of positive dimension on a smooth  manifold $M$ with an $\F$-invariant Riemannian metric $\g$, allows us to collapse $(M,\g)$ with bounded curvature.

\begin{thm}[Theorem 4.1 in \cite{CheegerGromov1986}]\th\label{T: collapse of F-structures}
Let $(M,\g)$ be a compact Riemannian manifold. Assume that $M$ has an $\F$-structure of positive dimension and that $\g$ is $\F$-invariant. Then $(M,\g)$ collapses with bounded curvature
\end{thm}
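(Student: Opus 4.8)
The plan is to adapt the Cheeger deformation of \th\ref{T: Classical cheeger deformation classical} to the local torus actions furnished by the $\F$-structure, shrinking the orbit directions while using the fact that each $T^{k_i}$ is abelian to keep the curvature under control. I would first work on a single chart: on each cover $\widetilde U_i$ the torus $T^{k_i}$ acts effectively and isometrically on $(\widetilde U_i,\widetilde\g_i)$, where $\widetilde\g_i$ is the $\pi_i$-lift of $\g|_{U_i}$, so I can Cheeger-deform $\widetilde\g_i$ with parameter $t$ to obtain a family $\widetilde\g_{i,t}$. The key point is that the Lie algebra of $T^{k_i}$ is abelian, so $[\Sh(p)(x),\Sh(p)(y)]=0$ and the leading term $\tfrac{t^3}{4}\|[\Sh(p)(x),\Sh(p)(y)]\|_{\QQ}^2$ in the curvature formula vanishes identically. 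What remains is $K(\widetilde\g_i)(v,w)$ together with the nonnegative $A$-tensor term, and using \th\ref{T: ONeills formula} together with the compactness of $M$ one checks that both stay bounded uniformly in $t$ and in $p$; the vanishing of the bracket term is precisely what prevents the upper curvature bound from blowing up as $t\to\infty$. Hence $|\Sec(\widetilde\g_{i,t})|\le\Lambda_i$ with $\Lambda_i$ independent of $t$.

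Next I would produce the collapse. As $t\to\infty$ the eigenvalues of $(\Id+t\Sh(p))^{-1}$ tend to $0$ along the orbit directions, so the length of every orbit-tangent vector shrinks to $0$; since the $\F$-structure has positive dimension, every point carries a positive-dimensional orbit, and therefore the injectivity radius tends to $0$ at every point. Choosing $t_n\to\infty$ with $t_1=0$ thus yields a collapsing sequence $\{\g_{t_n}\}$ with $\g_{t_1}=\g$.

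It remains to assemble the local deformations into a single globally defined metric $\g_t$ on $M$. Here I would invoke the compatibility conditions of the $\F$-structure: the deformation on $\widetilde U_i$ is invariant under the deck group $G_i$, which acts through automorphisms $\psi_i$ of $T^{k_i}$ and to which the Cheeger construction built from a bi-invariant metric $\QQ$ is insensitive, so $\widetilde\g_{i,t}$ descends to $U_i$; on an overlap $U_{ij}$ the finite covers $\widetilde T^{k_i}$ and $\widetilde T^{k_j}$ act on $V_{ij}$ commuting and effectively, which is what lets the two local deformations be matched. Once a single $\g_t$ is obtained, the curvature bound is automatic: every point lies in some $U_i$ where the local estimate applies, and a finite subcover of the compact $M$ gives a uniform $\Lambda=\max_i\Lambda_i$.

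I expect the consistency of this patching, rather than the curvature computation, to be the main obstacle. The difficulty is that on an overlap the orbits of $T^{k_i}$ and $T^{k_j}$ may have different dimensions, so the vertical directions being shrunk need not coincide and the naively glued object need not be a well-defined metric. The standard remedy is to pass to a polarization: in the pure polarized case the orbits $\pi_i(H_i(\tilde p_i))$ and $\pi_j(H_j(\tilde p_j))$ agree on $U_{ij}$, giving a globally well-defined vertical distribution along which to shrink, while the general positive-dimensional case is handled by a stratified argument that collapses the strata of differing orbit dimension in turn, keeping the curvature controlled throughout.
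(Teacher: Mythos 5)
The paper does not prove this statement itself --- it is quoted as Theorem 4.1 of \cite{CheegerGromov1986} --- so your proposal has to be measured against that proof, whose strategy is mirrored in the paper's own proofs of Theorem~\ref{T: Collapse of regular foliation} and Theorem~\ref{MT: Collapsing general flat foliations}. Your single-chart analysis is essentially sound: for an isometric torus action the abelianness kills the $t^3$-bracket term in Theorem~\ref{T: Classical cheeger deformation classical}, and positive dimension (no fixed points of the local $T^{k_i}$-actions) forces the injectivity radius to go to zero, so each $(\widetilde U_i,\widetilde\g_{i,t})$ collapses with bounded curvature. (Cheeger--Gromov, and this survey, instead rescale directly along the orbit directions, $\g_\delta=\delta^2\g|_{T\fol}\oplus\g|_{\nu}$, and control curvature via the Gromoll--Walschap identities using flatness of the torus orbits; locally the two routes are interchangeable.)

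The genuine gap is the globalization, which you correctly identify as the main obstacle but then dispose of in one sentence, and that sentence does not work. First, an $\F$-structure of positive dimension need \emph{not} admit a polarization, pure or otherwise: polarizability is a strictly stronger hypothesis, which is exactly why the polarized case is a separate theorem (Theorem 2.1 of \cite{CheegerGromov1986}, quoted immediately after this one in the paper) and why it yields the stronger conclusion of bounded diameter. So ``pass to a polarization'' is not an available reduction. Second, the ``stratified argument'' you invoke for the general case is not a quotable standard fact --- it \emph{is} the proof of Theorem 4.1. Even for a pure structure, the local tori, their finite covers, and the chosen bi-invariant metrics $\QQ_i$ differ across charts, so the local Cheeger deformations genuinely disagree on overlaps, and gluing them by a partition of unity destroys the curvature bound (the survey makes precisely this point about gluing foliated deformations). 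What the actual proof does is choose \emph{local} polarizations on saturated sets, shrink them at position-dependent rates, and stretch the transition regions by factors blowing up like $|\log\delta|$; this is exactly the bookkeeping with $\rho_{i,\delta}=\delta^{\log(\phi_i)/\log(1/2)}$ and the $\log(\delta)^2\g$ term in the paper's proof of Theorem~\ref{MT: Collapsing general flat foliations}, and it is this construction --- and only it --- that keeps the curvature bounded at the cost of letting the diameter blow up. As written, your proposal assumes this step rather than proving it, so it is circular at the crucial point.
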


In the general case, the diameter of the collapse given by \th\ref{T: collapse of F-structures} grows to infinity, and thus the collapsing sequence $(M,\g_{\delta})$ does not converge in the Gromov-Hausdorff sense to $(M^\ast,d^\ast)$.  However, in the case when the $\F$-structure admits a pure polarization $\Pp$, then we can also keep the diameter bounded.

\begin{thm}[Theorem 2.1 in \cite{CheegerGromov1986}]
Let $(M,\g)$ be a compact Riemannian manifold. Assume that $M$ has an $\F$-structure of positive dimension such that $\g$ is $\F$-invariant and there exists a pure polarization $\Pp$ of the $\F$-structure. Then $(M,\g)$ collapses with bounded curvature and bounded diameter, and the collapsing sequence converges in the Gromov-Hausdorff sense to $(M^\ast,d^\ast)$.
\end{thm}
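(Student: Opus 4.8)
The plan is to build the collapsing sequence by shrinking the metric along the orbits of the pure polarization, and then to read off the curvature, diameter, and Gromov--Hausdorff statements from the local Cheeger deformation picture. First I would exploit the purity of $\Pp$: condition (ii) in the definition of a polarization, together with $\dim(H_i)=\dim(H_j)$, forces $\pi_i(H_i(\tilde p_i))=\pi_j(H_j(\tilde p_j))$ on every overlap $U_{ij}$, so the local orbits of the polarizing tori $H_i$ patch into a single globally defined partition of $M$ into compact tori, all of one fixed dimension $k>0$. Since each $H_i$ acts locally freely (condition (iii)), these orbits are the fibres of a Riemannian submersion onto $M^\ast$ over the principal part, and the quotient map $\pi\colon (M,\g)\to(M^\ast,d^\ast)$ is a submetry for the $\F$-invariant metric $\g$. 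These orbits are the directions I would shrink.

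Second, I would define $\g_\delta$ by performing, over each cover $\widetilde U_i$, the Cheeger deformation of $\widetilde\g_i$ with respect to the torus $H_i$ (with a flat bi-invariant metric $\QQ_i$), letting the deformation parameter $t=t(\delta)\to\infty$ as $\delta\to 0$ collapse the orbit directions while fixing the normal directions, exactly as in the construction underlying \th\ref{T: collapse of F-structures}. The decisive simplification here is that $H_i$ is abelian: in the curvature formula of \th\ref{T: Classical cheeger deformation classical} the term $\tfrac{t^3}{4}\|[\Sh(p)(x),\Sh(p)(y)]\|^2_{\QQ_i}$ vanishes identically, because the bracket on the Lie algebra of a torus is zero. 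What survives is $K(\g)(v,w)$ together with the $A$-tensor term $\tfrac34\|[(t\Sh(p)(x),v),(t\Sh(p)(y),w)]^v\|^2_{\frac1t\QQ_i+\g}$, which one checks remains uniformly bounded as $t\to\infty$. Hence $|\Sec(\g_\delta)|\le\Lambda$ for a constant $\Lambda$ independent of $\delta$; using compactness of $M$ (finitely many charts $U_i$) and the fact that the shrinking directions agree on overlaps by purity, these local estimates assemble into a single uniform two-sided curvature bound on all of $M$.

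Third, I would extract the diameter and Gromov--Hausdorff conclusions from the submersion structure. The Cheeger tensor $\Ch_t(p)$ is the identity on vectors normal to the orbits, so the deformation never alters horizontal lengths: the submersion $\pi\colon(M,\g_\delta)\to(M^\ast,d^\ast)$ keeps its base metric fixed while the fibres (the polarization tori) shrink, with fibre diameter tending to $0$ as $t\to\infty$. Thus $\diam(M,\g_\delta)\le\diam(M^\ast,d^\ast)+\sup_{q\in M^\ast}\diam(\pi^{-1}(q),\g_\delta)$ stays bounded, and every point of $M$ lies within $\sup_q\diam(\pi^{-1}(q),\g_\delta)$ of a horizontal lift of an $\varepsilon$-net of $M^\ast$. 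Feeding this into \th\ref{P: characterization of GH convergence via approximations} shows that $(M,\g_\delta)\to(M^\ast,d^\ast)$ in the Gromov--Hausdorff sense as $\delta\to0$.

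The main obstacle I anticipate is the global gluing in the second step: the torus actions live only on the finite covers $\widetilde U_i$ and are intertwined by the transition data $\psi_i\colon G_i\to\mathrm{Aut}(T^{k_i})$ and by the commuting cover actions on the pullbacks $V_{ij}$, so one must verify that the locally defined Cheeger deformations descend to $M$, agree on overlaps, and carry their uniform curvature bound through the patching. Purity is exactly what makes the shrinking directions globally well defined, since the polarization orbits coincide on overlaps rather than merely being nested; this is why the pure hypothesis, and not merely the existence of a polarization, is what buys both the bounded diameter and the identification of the limit with $(M^\ast,d^\ast)$.
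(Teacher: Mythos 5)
The paper itself gives no proof of this statement (it is quoted from Cheeger--Gromov), so your proposal can only be measured against the standard argument, which is also the one the paper adapts when proving \th\ref{T: Collapse of regular foliation} and \th\ref{MT: Collapsing general flat foliations}. Measured that way, your second step has a genuine gap. You define $\g_\delta$ chart by chart as the Cheeger deformation of $\widetilde{\g}_i$ with respect to $(H_i,\QQ_i)$ and assert that purity makes these local deformations agree on overlaps. Purity makes the \emph{orbit foliation} globally well defined, but a Cheeger deformation is not determined by the orbit foliation and $\g$ alone: on orbit directions $\g_t$ is $\g\big((\Id+t\Sh(p))^{-1}(\cdot),\cdot\big)$, and the orbit tensor $\Sh$ depends on the chosen bi-invariant metric $\QQ_i$ and on how $H_i$ parametrizes the orbits (rescaling $\QQ_i$ by $c^2$ rescales $\Sh$ by $c^{-2}$, i.e.\ it changes the time parameter of the deformation). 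On $U_{ij}$ the tori $H_i$ and $H_j$ act through different finite covers and are related only by having equal orbit images, so at a fixed finite $t$ the two local Cheeger metrics will in general differ; you have not produced a single metric on $M$, and gluing by a partition of unity destroys the curvature bound. The repair is to deform canonically: let $V\subset TM$ be the distribution tangent to the polarization orbits (global by purity) and set $\g_\delta=\delta^2\,\g|_{V}\oplus\g|_{V^\perp}$. This depends only on $(\g,V)$, so it glues for free, and the curvature bound becomes a local computation in each chart using the isometric $H_i$-action and the flatness of its orbits (the abelian point you correctly isolate), via the Gromoll--Walschap identities exactly as in the paper's proof of \th\ref{T: Collapse of regular foliation}.

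Second, your identification of the limit does not follow from your construction. You treat the polarization orbits as compact tori that are the fibers of $\pi\colon M\to M^\ast$; but $M^\ast$ is the quotient by the full $T^{k_i}$-orbits, whereas the polarization orbits $\pi_i(H_i(\tilde p))$ may be proper subsets of them, and need not even be compact (the paper's own example $H=\{(e^{ipt},e^{iqt})\}$ can be dense in $T^2$). Shrinking only the polarization directions yields Gromov--Hausdorff convergence to the quotient of $M$ by the \emph{closures} of the polarization orbits, not by the structure orbits: for the $T^2$-structure on $\Sp^3$ polarized by the Hopf circle, your construction is precisely the Berger collapse, whose limit is a round $2$-sphere of radius $1/2$, not $\Sp^3/T^2\cong[0,\pi/2]$. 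So as written your third step proves convergence to the polarization quotient; convergence to $(M^\ast,d^\ast)$ would require either taking that quotient as the definition of the limit space or an extra argument that the polarization orbit closures fill out the structure orbits, and this discrepancy has to be confronted rather than absorbed into the submersion notation.
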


In a later work \cite{CheegerFukayaGromov1992}, expanding on the ideas in \cite{Fukaya1987,Fukaya1989}, it was proven that a more general structure, the so-called $\Ns$-structure controls the collapse. In general an $\Ns$-structure induces an $\F$-structure, and in the case when $M$ has finite fundamental group the concepts of $\Ns$-structures and $\F$-structures agree, in the sense that an $\Ns$-structure is an $\F$-structure and viceversa. 

\begin{thm}[Theorems 1.3 and 1.7 in \cite{CheegerFukayaGromov1992}]\th\label{T: Existence of N structures on}
For $m>2$ and we denote by $\mathfrak{M}(m)$ the class of $m$-dimensional compact connected Riemannian manifolds $(M,\g)$ with $|\Sec(\g)|\leq 1$, finite fundamental group.

Given $\varepsilon>0$, there exists a constant $v=v(m,D,\varepsilon)>0$ such that given $(M,\g)\in \mathfrak{M}(m)$ with $\Vol(\g)< v$, then $M$ admits a pure $\F$-structure $\widetilde{\eta}_\varepsilon\colon F(M) \to B$ of positive rank. Moreover, 
\begin{enumerate}[(a)]
\item there is a smooth metric $\g^\varepsilon$ on $M$, which is $\F$-invariant for the $\F$-structure $\widetilde{\eta}_\varepsilon$,
\item the fibers of $\widetilde{\eta}_\varepsilon$ have diameter less than $\varepsilon$ with respect to $g^\varepsilon$, 
\item and
\[
e^{-\varepsilon} \g< \g^\varepsilon < e^{\varepsilon}\g.
\]
\end{enumerate} 
\end{thm}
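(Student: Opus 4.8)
The plan is to follow the frame-bundle strategy of Fukaya, in the refined form of Cheeger--Fukaya--Gromov. The essential difficulty is that the hypothesis only gives a two-sided bound $|\Sec(\g)|\leq 1$ with no a priori control on the covariant derivatives of the curvature tensor, whereas every construction that produces an \emph{invariant} metric needs a regularized metric with $|\nabla^k R|\leq C_k$ for all $k$. So the first step is a smoothing: using Abresch-type embedding and mollification (or a short-time geometric flow), I would replace $\g$ by a bi-Lipschitz-close metric $\g'$ with $e^{-\varepsilon/3}\g<\g'<e^{\varepsilon/3}\g$, with curvature bound comparable to $1$ and uniform bounds on all $|\nabla^k R_{\g'}|$. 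This reduces the problem to the bounded-geometry regime at the cost of only a fixed factor in the final estimate (c).

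Second, pass to the orthonormal frame bundle $FM$, equipped with the canonical $\Orth(m)$-invariant metric $\tilde{\g}$ assembled from the Levi--Civita connection of $\g'$: a bi-invariant metric along the $\Orth(m)$-fibers together with the horizontal distribution of the connection. The regularity gained in the first step guarantees that $(FM,\tilde{\g})$ has uniformly bounded geometry. The decisive feature of the frame bundle is that the free isometric $\Orth(m)$-action trivializes the holonomy of the collapse: whereas the local models of a collapse of $M$ are in general infranilmanifolds, on $FM$ the local models become pure nilmanifolds carrying free local actions, since the finite ``infra'' part is absorbed into $\Orth(m)$. This is what makes the local nilpotent data globally coherent and descendable.

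Third, extract and glue the local nilpotent structure. On small balls of $FM$ the local fundamental pseudogroup is, by the Margulis lemma and Gromov's almost-flat-manifolds theorem, contained in a nilpotent Lie group $N$, and the collapse is locally modeled on a free isometric $N$-action commuting with the global $\Orth(m)$-action. I would glue these local models using an $\Orth(m)$-equivariant partition of unity whose cardinality is controlled by $D$, and average $\tilde{\g}$ over the local $N$-orbits by a center-of-mass construction in the space of metrics; this yields an $\Orth(m)\times N$-invariant metric $\hat{\g}$ with $e^{\pm\varepsilon/3}\tilde{\g}$-closeness. Pushing the invariant data down through $FM\to M$ gives the $\Ns$-structure on $M$ together with an invariant metric $\g^\varepsilon$ satisfying (a) and the bi-Lipschitz bound (c); choosing the collapse scale small, i.e. $\Vol(\g)<v$ with $v=v(m,D,\varepsilon)$ fixed after the universal constants above, forces the orbits to have diameter $<\varepsilon$, giving (b). The hard part is precisely this step: converting a purely metric collapse into Lie-theoretic data that patches $\Orth(m)$-equivariantly and survives the averaging.

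Finally, I would invoke finiteness of $\pi_1(M)$ to upgrade the $\Ns$-structure to a pure $\F$-structure of positive rank. The sheaf of nilpotent Lie algebras underlying the $\Ns$-structure carries a well-defined holonomy; a positive-dimensional non-abelian nilmanifold fiber would contribute an infinite subgroup to $\pi_1(M)$, so finiteness forces this sheaf to be a sheaf of tori. Hence the structure is an honest pure torus, i.e. a pure $\F$-structure of positive rank, which is the statement of Theorem 1.7 specialized to the finite-fundamental-group case.
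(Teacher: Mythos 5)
The paper itself offers no proof of this statement---it is quoted directly from Cheeger--Fukaya--Gromov (with the finite-$\pi_1$ reduction to $\F$-structures also stated there as known)---so your proposal can only be measured against the original argument. On that score your outline is faithful to the actual strategy: smoothing to gain bounds on all $|\nabla^k R|$, passing to the orthonormal frame bundle $F(M)$ where the local models of the collapse become genuine nilmanifolds with free local actions (the finite ``infra'' ambiguity being absorbed into the $\Orth(m)$-action), $\Orth(m)$-equivariant globalization of the local nilpotent data, center-of-mass averaging to produce an invariant metric, and descent to $M$. You are also right that the globalization is the hard core; in Cheeger--Fukaya--Gromov it is not a partition-of-unity argument on metrics but a compatibility statement for the local nilpotent group actions themselves, and it is the diameter bound $D$ that yields a single Fukaya fibration of $F(M)$ over one limit space, which is what makes the resulting structure \emph{pure} rather than mixed.

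The genuine gap is in your final step. You claim that a positive-dimensional non-abelian nilmanifold orbit ``would contribute an infinite subgroup to $\pi_1(M)$,'' which implicitly uses that the fundamental group of an orbit injects into (or has infinite image in) $\pi_1(M)$. That is false in general: in the Berger collapse of $\Sp^3$ the orbits are circles, $\pi_1$ of each orbit is $\Z$, yet $\pi_1(\Sp^3)$ is trivial, so infinitude of $\pi_1$ of a fiber by itself contributes nothing to $\pi_1(M)$. The correct reduction is subtler. Over the regular stratum a pure structure gives a fibration with infranilmanifold fiber $F$, and exactness of the homotopy sequence identifies $\kernel\big(\pi_1(F)\to\pi_1(M)\big)$ with the image of $\pi_2(\mathrm{base})$, which is \emph{central} in $\pi_1(F)$. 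Finiteness of $\pi_1(M)$ then makes $\pi_1(F)$ central-by-finite; by Schur's theorem its commutator subgroup is finite, hence trivial because $\pi_1(F)$ is torsion-free (the fiber is aspherical); so $\pi_1(F)$ is abelian, and by Mal'cev rigidity the nilpotent group acting locally is abelian, i.e.\ the fibers are tori. In other words, your conclusion is true precisely for the non-abelian part, but only via this centrality argument, which is the content your one-line justification omits; as written, the step fails, and it is exactly the step where your proposal goes beyond restating the Cheeger--Fukaya--Gromov outline.
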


Moreover, the metric $\g^\varepsilon$ can be chosen so that the sectional curvatures are close to the ones of $\g$ by the following theorem.

\begin{thm}[Theorem 2.1 in \cite{Rong1996}]\th\label{T: Rong bounds sectional curvature}
Let the assumptions be as in \th\ref{T: Existence of N structures on}. Then the nearby metric $\g^\varepsilon$ can  be chosen to satisfy in addition
\[
	\min \Sec(\g) -\varepsilon \leq \Sec(\g^\varepsilon) \leq\max \Sec(\g)+\varepsilon.
\]
\end{thm}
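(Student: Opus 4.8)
The plan is to obtain $\g^\varepsilon$ not as an arbitrary metric $C^0$-close to $\g$ in the sense furnished by \th\ref{T: Existence of N structures on}, but as the output of a two-stage \emph{smooth-then-average} process in which each stage is shown to move the sectional curvature by at most $\varepsilon/2$. The guiding observation is that the $C^0$-closeness $e^{-\varepsilon}\g<\g^\varepsilon<e^{\varepsilon}\g$ of \th\ref{T: Existence of N structures on} is far too weak to control curvature—a $C^0$-small perturbation can change $\Sec$ arbitrarily—so the refinement must be built into the construction of the invariant metric itself rather than deduced a posteriori.

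First I would apply a smoothing theorem (of Abresch type, as used in \cite{CheegerFukayaGromov1992}) to replace $\g$ by a metric $\tilde\g$ on $M$ satisfying uniform bounds $|\nabla^k R_{\tilde\g}|\le C_k$ on all covariant derivatives of its curvature tensor, while keeping $\tilde\g$ close to $\g$ in the sense that $e^{-\varepsilon/2}\g<\tilde\g<e^{\varepsilon/2}\g$ and
\[
\min\Sec(\g)-\tfrac{\varepsilon}{2}\le \Sec(\tilde\g)\le \max\Sec(\g)+\tfrac{\varepsilon}{2}.
\]
The role of this step is purely technical: the averaging estimate in the next stage requires bounds not only on $\Sec$ but on the first and second covariant derivatives of the curvature, and these are exactly what smoothing supplies, at the cost of a controlled and curvature-preserving-up-to-$\varepsilon$ change of metric.

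Next I would produce $\g^\varepsilon$ by averaging $\tilde\g$ over the local nilpotent (infra-nil) structure underlying the collapse. On the finite covers $\widetilde U_i$ appearing in the $\F$-structure of \th\ref{T: Existence of N structures on} (equivalently, on the frame bundle), the collapse is modeled on orbits of a local nilpotent group $N$ whose generating local Killing-type fields are $C^{k}$-close to genuine Killing fields of $\tilde\g$, with closeness governed by the diameter of the collapsing fibers—hence by $\Vol(\tilde\g)$, which the hypothesis makes small. Averaging $\tilde\g$ over these orbits and patching via an invariant partition of unity yields an $\F$-invariant metric $\g^\varepsilon$. The key quantitative input is the estimate that if the averaging diffeomorphisms are $\delta$-close in $C^{2}$ to isometries of $\tilde\g$, then the averaged metric satisfies $|\Sec(\g^\varepsilon)-\Sec(\tilde\g)|\le C\delta$, with $C$ depending only on the constants $C_k$ from the smoothing step; choosing the volume small enough forces $\delta$, and hence this difference, below $\varepsilon/2$. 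Combining with the smoothing estimate yields the claimed two-sided bound.

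The main obstacle is precisely this curvature-averaging estimate. One must show that averaging over a family of near-isometries perturbs the \emph{second} derivatives of the metric—and thus the full curvature tensor—by an amount linear in the $C^2$-defect of those maps from being isometries. This is delicate because averaging is an integral operation while curvature depends nonlinearly (through $\g^{-1}$ and second derivatives) on the metric; one controls it by differentiating the averaging formula, expressing $\partial^2\g^\varepsilon$ in terms of $\partial^2(\phi_s^*\tilde\g)$, and using that $\phi_s^*\tilde\g-\tilde\g$ together with its derivatives up to order two is $O(\delta)$ by the near-Killing property, all implied constants being tamed by the bounded geometry of $\tilde\g$. The subtlety, and the reason the order \emph{smooth-then-average} cannot be reversed, is that this argument consumes the bound $|\nabla^2 R_{\tilde\g}|\le C_2$, which is available only after the smoothing step has been carried out.
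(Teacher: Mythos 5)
The survey gives no proof of this statement---it is imported verbatim from Theorem 2.1 of \cite{Rong1996}---so the only meaningful comparison is with Rong's original argument, and your proposal reproduces it: smooth first (Abresch/Ricci-flow smoothing, costing $\varepsilon/2$ in the two-sided curvature bound but supplying uniform bounds on $\nabla^k R$), then perform the Cheeger--Fukaya--Gromov nilpotent averaging of \cite{CheegerFukayaGromov1992}, which under that bounded geometry is $C^2$-close (not merely $C^0$-close) to the smoothed metric and hence moves sectional curvature by at most another $\varepsilon/2$. Your observation that the two stages cannot be interchanged, because the averaging estimate consumes the derivative bounds produced by smoothing, is likewise the crux of the argument in the literature, so your proposal is correct and takes essentially the same route as the proof this paper relies on by citation.
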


In particular the metric $\g^\varepsilon$ in \th\ref{T: Existence of N structures on}  induces an inner metric $d_{\g^\varepsilon}^\ast$ on the  quotient spaces of the $\F$-structures  making them into Alexandrov spaces of curvature at least $\min \Sec(\g)-\varepsilon$. 

\begin{remark}
When $M$ is simply-connected a pure $\F$-structure is given by a torus action $\mu_\varepsilon\colon T^k\times M\to M$ (see \cite[Lemma 1.4 and Lemma 3.2]{Rong1996}), and this action is by isometries with respect to the Riemannian metric $\g^\varepsilon$.
\end{remark}


\section{Singular Riemannian foliations}\label{Singular Riemannian foliations}

In this section we present the formal definition of a singular Riemannian foliation and define some properties of these foliations.

\begin{definition}\th\label{D: Singular Riemannian foliation}
A \emph{singular Riemannian foliation} $\fol=\{L_p\mid p\in M\}$ on a Riemannian manifold $(M,g)$ is a partition of $M$ into injectively immersed submanifolds $L_p$, called leaves, such that it is:
\begin{enumerate}[(i)]
\item a smooth singular foliation: There exists a family $\{X_\alpha\}$ of smooth vector fields of $M$ such that for any $p\in M$ with $p\in L_p$ we have $\mathrm{Span}\{X_\alpha(p)\} = T_p L_p$;
\item a transnormal system: For any geodesic $\gamma\colon [0,1]\to M$ with $\gamma'(0)\perp T_{\gamma(0)} L_{\gamma(0)}$, we have that $\gamma'(t)\perp T_{\gamma(t)} L_{\gamma(t)}$.
\end{enumerate}
\end{definition}

The foliation $\fol$ is called regular if the dimension of the leaves is constant. We say that a singular Riemannian foliation is \emph{closed} if all the leaves are closed manifolds (i.e. compact without boundary); in this case the leaves are embedded submanifolds. A subset $U\subset M$ is called \emph{saturated} if for any $p\in U$ we have $L_p\subset U$. 

The foliation $\fol=\{L_p=\{p\}\mid p\in M\}$ is called the \emph{trivial point-leaves foliation}, and the foliation $\fol=\{L_p=M\mid p\in M\}$ is called the \emph{trivial single-leaf foliation}. Observe that both foliations are singular Riemannian foliations with respect to any Riemannian metric $\g$ on $M$.

The \emph{leaf space} $M/\fol$ of a singular Riemannian foliation $(M,\fol,\g)$ is the quotient space induced by the partition $\fol$, equipped with the quotient topology, i.e. the finest topology on $M/\fol$ making the projection $\pi\colon M\to M/\fol$ continuous. When the foliation $\fol$ is closed and $(M,\g)$ is complete, the Riemannian metric $\g$ induces a metric $d^\ast$ which induced the quotient topology and such that $(M/\fol,d^\ast)$ is locally an Alexandrov space \cite{LytchakThobergsson2010}.

\subsection{Infinitesimally foliation}

Let $(M,\g)$ be a compact Riemannian manifold equipped with $\fol$ a closed singular Riemannian foliation. Fix a point $p\in M$ and take $0<\varepsilon<\mathrm{injrad}(\g)(p)$. The \emph{normal disk tangent disk at $p$ of radius $\varepsilon$} is
\[
\D^\perp_p (\varepsilon) = \{v\in T_p M\mid \|v\|^2_{\g}\leq \varepsilon\}.
\]
For $v\in \D^\perp_p (\varepsilon)$, taking $q:= \exp_p(v)\in L_{\exp_p(v)}$ by \cite{Molino} we have that $d(L_p,L_q) = d(L_p,q)=\|v\|_{\g}$. This means that a sufficiently close leaf $L_q$ to $L_p$ is contained in the boundary of a $\|v\|_{\g}$-tubular neighborhood of $L_p$. When we consider the connected components of the intersections of leaves of $\fol$ with $\exp_p(\D^\perp(\varepsilon))$, then by \cite{Molino} the preimages under $\exp_p$ of the connected components induce a smooth foliation $\fol_p(\varepsilon)$ on $\D^\perp_p(\varepsilon)$. Moreover, with respect to the (restricted) Euclidean inner product $\g_p$ on $T_p M$ the foliation  $\fol_p(\varepsilon)$ is a singular Riemannian foliation. Given $0<\varepsilon'\leq \varepsilon$, we can consider  the homothety $h_{\varepsilon/\varepsilon'}\colon T_p M\to T_p M$ defined as $h_{\varepsilon/\varepsilon'}(w) := (\varepsilon/\varepsilon') w$. The homothety $h_{\varepsilon/\varepsilon'}$ is a foliated diffeomorphism between  $(\D^\perp_p(\varepsilon'),\fol_p(\varepsilon'))$ and $(\D^\perp_p,\fol_p(\varepsilon))$. This implies that the foliation $\fol_p(\varepsilon)$ does not depend on the scale $\varepsilon$ we have fixed. Thus for any $\varepsilon'$ we can use the homothety $h_{\varepsilon/\varepsilon'}\colon \D^\perp_p(\varepsilon')\to \D^\perp_p(\varepsilon)$ to define $\fol_p(\varepsilon'):=h_{\varepsilon/\varepsilon'}^{-1}(\fol_p(\varepsilon)) =\{h_{\varepsilon/\varepsilon'}^{-1}(\mathcal{L}_{v}\mid v \in  \D^\perp_p(\varepsilon)\}$, and it is such that for $\varepsilon'\leq \varepsilon$ we have $\fol_p(\varepsilon') = h_{\varepsilon/\varepsilon'}^{-1}(\fol_p(\varepsilon))$. Thus we may assume without loss of generality that $\fol_p$ is defined on $\nu_p(L_p)$, the normal space of $L_p$ at $p$. We call this foliation \emph{the infinitesimal foliation at $p$}, and by \cite{Molino} it is also a singular Riemannian foliation with respect to the Euclidean Riemannian metric $\g_p|_{\nu_p(L_p)}$.

We observe that the leaves of $\fol_p(\varepsilon)$ are contained in spheres in $\D^\perp_p(\varepsilon)$. That is, for $v\in \D^\perp_p(\varepsilon)$ the subset $\Sp^\perp_p(\|v\|_{\g}) := \{w\in \D^\perp_p(\varepsilon)\mid \|w\|^2_{\g}= \|v\|^2_{\g}\}$ is saturated by $\fol_p(\varepsilon)$. Since we have established that $\fol_p^\perp(\varepsilon)$ is independent of $\varepsilon$,  we may assume without loss of generalization that $\fol_p$ restricted to the boundary sphere of the unit disk $\Sp^\perp_p=\partial \D^\perp(1)\subset \nu_p(L_p)$ induces a singular foliation,  also called the infinitesimal foliation, and which we also denote by $\fol_p$. Moreover by \cite{Molino}, the foliation $(\Sp^\perp_p,\fol_p)$ is a singular Riemannian foliation with respect to the round metric on $\Sp^\perp_p$ induced from the Euclidean metric $\g_p|_{\nu_p(L_p)}$.


\section{Collapse of singular Riemannian foliations with flat leaves}\label{S: Collapse of singular Riemannian foliations with flat leaves}

In this section we consider $(M,\g)$ a complete Riemannian manifold equipped with a closed singular Riemannian foliation $\fol$ with connected leaves such that for any $L\in \fol$, the submanifold $(L,\g|_{L})$ is a flat Riemannian manifold. We refer to such a foliation as a \emph{foliation with flat leaves}, or a \emph{$B$-foliation}, where the $B$ stands for the leaves being Bieberbach manifolds (see \cite{Corro2019,Corro2024}). Thus by the Cartan-Hadamard Theorem, for any leaf $L$ in a foliation with flat leaves we have that its universal cover is contractible. This implies that each leaf is aspherical, and thus this foliation is an $A$-foliation. Moreover by \cite{Bieberbach} the leaves are isometric to a so-called Bieberbach manifold. That is, the quotient of a Euclidean space by the free action of a discrete group of   

Since we can collapse  closed flat manifolds with bounded curvature and diameter is natural to ask if it possible to collapse a continuous parametrization of such manifolds, i.e. a foliation, by ``shrinking'' the directions tangent to the leaves while keeping the curvature bounded, including the curvature of the mixed planes spanned by vectors tangent to the leaves of the foliation, and the horizontal vectors perpendicular to the leaves of the foliation. 

This is possible for a regular flat foliation. Here we present a different proof of \cite[Theorem B]{Corro2024}.\\

\begin{duplicate}[\ref{T: Collapse of regular foliation}]
Consider $(M, \fol, \g)$ a regular non-trivial foliation with flat leaves on a compact Riemannian manifold. For $0<\delta\leq 1$ and $p\in M$ given we consider $T_p M= T_pL_p \oplus \nu_p (L_p)$ and set 
\[
\g_\delta(p) = \delta^2 (\g(p)|_{T_p L_p})\oplus (\g(p)|_{\nu_p(L_p)}).
\]
Then $(M,\g_\delta)$ collapses with bounded curvature and uniformly bounded diameter to the leaf space $(M/\fol,d^\ast)$.
\end{duplicate}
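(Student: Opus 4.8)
The plan is to localise the statement to Riemannian submersions and then extract every sectional curvature from O'Neill's formula. Because $\fol$ is a regular Riemannian foliation on the compact manifold $M$, Molino's structure theory \cite{Molino} provides, around each point, a saturated neighbourhood on which $\fol$ is the fibre partition of a Riemannian submersion $\pi\colon U\to(B,\hh)$; by compactness I cover $M$ with finitely many such charts, which turns local curvature estimates into uniform ones. On each chart the splitting $TM=T\fol\oplus\nu$ identifies $\g_\delta$ with the \emph{canonical variation} $\delta^2\g|_{T\fol}\oplus\g|_\nu$ of $\pi$. Setting $t=\delta^2$, I would first express the Levi-Civita connection $\nabla^{t}$ of $\g_\delta$ through $\nabla$, the O'Neill $A$-tensor of $\pi$, and the second fundamental form $\II$ of the leaves; a Koszul-formula computation yields, for horizontal $X,Y$ and vertical $U,V$, the relations $\nabla^{t}_XY=\nabla_XY$ and $(\nabla^{t}_UV)^{\nu}=t\,\II(U,V)$, together with the analogous $t$-weighted mixed terms.

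With $\nabla^{t}$ available I would compute the three families of sectional curvatures on $\g_\delta$-orthonormal frames, rescaling each unit vertical vector to $U/\delta$. For a horizontal plane, O'Neill's formula (\th\ref{T: ONeills formula}) gives $\Sec_{\g_\delta}(X,Y)=\Sec_{\hh}(\pi_{\ast}X,\pi_{\ast}Y)-3t\|A_XY\|^{2}_{\g}\to\Sec_{\hh}$, hence bounded; for a mixed plane the curvature equals $\delta^{-2}$ times an expression that the connection computation shows to be $O(\delta^{2})$, hence $O(1)$. The delicate family is the purely vertical one. By the Gauss equation applied to a leaf inside $(M,\g_\delta)$, its vertical sectional curvature equals the \emph{intrinsic} leaf curvature (for the metric $\delta^{2}\g|_{L}$, i.e. $\delta^{-2}$ times the $\g|_L$-curvature) minus terms quadratic in the leaf's second fundamental form. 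This is exactly where the hypothesis enters: the leaves are flat, so the intrinsic term vanishes identically and the otherwise divergent $\delta^{-2}$ contribution disappears. I expect this to be the main obstacle: one still has to check that the second-fundamental-form terms are genuinely $O(1)$ and not $O(\delta^{-2})$, which follows from the short computation that in $\g_\delta$-orthonormal frames $\|\II^{\g_\delta}(U/\delta,V/\delta)\|_{\g_\delta}=\|\II(U,V)\|_{\g}$, the normal directions being unscaled. Maximising over the finite cover produces a single $\Lambda\ge0$ with $|\Sec(\g_\delta)|\le\Lambda$ for all $0<\delta\le1$.

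Granting the uniform curvature bound, the remaining properties are soft. Since the leaves have dimension $k=\dim\fol>0$, the deformation scales a $k$-dimensional volume factor, so $\Vol(M,\g_\delta)=\delta^{k}\Vol(M,\g)\to0$; together with $|\Sec(\g_\delta)|\le\Lambda$, Günther's volume comparison forces $\sup_{p}\mathrm{inj}_{\g_\delta}(p)\to0$ — a point of definite injectivity radius would carry a ball of volume bounded below, contradicting $\Vol\to0$ — which is exactly the collapse. For the diameter, $\delta\le1$ gives $\g_\delta\le\g$, hence $d_{\g_\delta}\le d_{\g}$ and $\diam(M,\g_\delta)\le\diam(M,\g)$, a uniform bound.

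Finally, to identify the limit I would use the projection $\pi\colon(M,\g_\delta)\to(M/\fol,d^{\ast})$, whose fibres are the leaves and therefore have diameter at most $\delta\cdot\max_{L}\diam(L,\g|_{L})\to0$. Lifting an $\varepsilon$-net of $(M/\fol,d^{\ast})$ to $M$ and using $d_{\g_\delta}\le d_{\g}$ together with the shrinking fibre diameters shows that $M/\fol$ and $(M,\g_\delta)$ are $\varepsilon_\delta$-approximations of one another with $\varepsilon_\delta\to0$. \th\ref{P: characterization of GH convergence via approximations} then gives $d_{\mathrm{GH}}\big((M,\g_\delta),(M/\fol,d^{\ast})\big)\to0$, so the bounded-curvature, bounded-diameter sequence $\{(M,\g_\delta)\}_{\delta\to0}$ collapses to the leaf space, as claimed.
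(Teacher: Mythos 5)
Your curvature argument is, at its core, the same as the paper's: both reduce to the canonical-variation curvature identities for a Riemannian submersion, and both hinge on the single observation that the only term carrying a factor $\delta^{-2}$ is the intrinsic leaf curvature, which vanishes because the leaves are flat. The differences are in packaging and in completeness. The paper works globally: it uses that $(M/\fol,d^\ast)$ is a Riemannian orbifold, treats $\pi\colon (M_{\prin},\g)\to (M^{\ast}_{\prin},\g^{\ast})$ as a Riemannian submersion, quotes the three curvature identities from Section~2.1 of \cite{GromollWalschap}, and concludes uniform boundedness by continuity and compactness; it does not spell out the collapse itself, the diameter bound, or the Gromov--Hausdorff identification of the limit. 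You instead localize to submersion charts and derive the identities from Koszul/O'Neill/Gauss --- one caveat: these charts are plaque (distinguished) charts, not \emph{saturated} neighbourhoods, since holonomy generally obstructs a saturated neighbourhood fibering as a genuine submersion; curvature being local, this is harmless, but you should drop the word ``saturated.'' Your route then buys two things the paper leaves implicit: (i) the soft conclusions are actually proved --- $\Vol(M,\g_\delta)=\delta^{k}\Vol(M,\g)\to 0$ together with the two-sided curvature bound forces $\mathrm{inj}(\g_\delta)\to 0$ via Günther's comparison, $\g_\delta\leq\g$ gives the uniform diameter bound, and the shrinking-fiber/$\varepsilon$-net argument gives the GH convergence to the leaf space; and (ii) the curvature bound itself needs no orbifold theory. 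What it costs: the GH-limit step genuinely needs closedness of the foliation (compact leaves of uniformly bounded intrinsic diameter, and $\pi$ a submetry), a hypothesis present in the paper's original Theorem~\ref{T: Collapse of regular foliation} but silently dropped in the restated version; you should flag that your last paragraph uses it.
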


\begin{proof}
Recall that $(M/\fol,d^{\ast})$ is a smooth orbifold, and that $d^\ast$ is induced by a smooth orbifold Riemannian metric $\g^{\ast}$. Moreover, over the principal stratum of $\fol$ the quotient map is a Riemannian submersion $\pi\colon (M_{\prin},\g)\to (M^{\ast}_{\prin},\g^\ast)$.

From \cite[Section 2.1]{GromollWalschap} we have the following sectional curvature identities over $M_\prin$, for $V,W\in T\fol$ and $X,Y\in (T\fol)^{\perp}$:
\begin{align}
\Sec(\g_\delta)(X,Y) &=(1-\delta)\Sec_{M/\fol}(\pi_{\ast}X,\pi_{\ast}Y)+\delta \Sec(\g)(X,Y),\\
\Sec(\g_\delta)(X,V) &= \Sec(\g)(X,V)- \frac{(1-\delta)}{\|X\|_{\g}^2\|V\|_{\g}^2}\|A^{\ast}_X V\|^2_{\g},\\
\Sec(\g_\delta)(V,W) &= \frac{(1-\delta)}{\delta^2} \Sec(\g|_{T\fol})(V,W)+\Sec(\g)(V,W).
\end{align}
From this we see that over the principal part the sectional curvature remains uniformly bounded independent of $\delta$ when $\Sec(\g|_{T\fol})\equiv 0$. Since the sectional curvatures of $(M,\g)$ and $(M/\fol,\g^\ast)$ are continuous, the conclusion about the uniform bounds of $\Sec(\g_{\delta})$ holds.
\end{proof}

This particular construction allows us to compare in \cite{Corro2024} regular closed  foliations with $\F$-structures in the case when $M$ has finite fundamental group, and to conclude that they are induced by torus actions when $M$ is simply connected.

In the case when the foliation is not regular but the singular strata are isolated, by doing a procedure similar to the case of polar $\F$-structures in \cite{CheegerGromov1986} we can collapse the foliated manifold with bounded curvature but not with bounded diameter.\\

\begin{duplicate}[\ref{MT: Collapsing general flat foliations}]
Consider $(M, \fol, \g)$ a  closed singular Riemannian foliation with flat leaves on a compact Riemannian manifold, such that all singular strata are isolated from one another. Then we can collapse $(M,\g)$  with bounded curvature, in a way that also the volume of the collapsing sequence goes to $0$.
\end{duplicate}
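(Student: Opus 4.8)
The plan is to localize the construction around the singular strata and glue it to the regular collapse of \th\ref{T: Collapse of regular foliation} on the principal part. Write the stratification of $\fol$ as $M = M_{\prin}\cup \Sigma_1\cup\cdots\cup\Sigma_k$, where $M_{\prin}$ is the open dense principal stratum and the $\Sigma_j$ are the singular strata. Each $\Sigma_j$ is a saturated embedded submanifold on which $\fol$ restricts to a \emph{regular} foliation with flat leaves, and since the strata are isolated from one another I can choose pairwise disjoint saturated tubular neighborhoods $U_j\supset\Sigma_j$ whose closures meet no other singular stratum. This reduces the problem to two model situations: the regular collapse on the complement of the tubes, and a collapse inside a single tube $U_j$ that agrees with the regular one near $\partial U_j$ while remaining smooth along $\Sigma_j$.

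Inside $U_j$ I would exploit the infinitesimal foliation structure of Section~\ref{Singular Riemannian foliations}. Identifying $U_j$ with a normal disk bundle of $\Sigma_j$, every nearby leaf of $\fol$ projects to a leaf of $\fol|_{\Sigma_j}$ and meets each normal slice $\D^\perp_p$ in a leaf of the infinitesimal foliation $\fol_p$. Thus $T\fol$ splits, along $U_j$, into the directions tangent to the leaves of $\fol|_{\Sigma_j}$ (which persist and do not degenerate as one moves off $\Sigma_j$) and the \emph{infinitesimal} directions $T\fol_p$ (which contract into $\Sigma_j$). First I would apply \th\ref{T: Collapse of regular foliation} to the regular flat foliation $(\Sigma_j,\fol|_{\Sigma_j})$ and extend this collapse radially-constantly across $U_j$; that is, shrink only the $\Sigma_j$-leaf directions by $\delta$ and leave the normal slices untouched. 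Because these directions are flat and do not degenerate along $\Sigma_j$, the resulting metric is smooth across $\Sigma_j$ and has curvature bounded independently of $\delta$, exactly as in the proof of \th\ref{T: Collapse of regular foliation}.

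The delicate point, and the main obstacle, is the interpolation over the overlap $U_j\cap M_{\prin}$ between this base-only collapse and the regular collapse of the full leaf, in which the infinitesimal directions $T\fol_p$ are \emph{also} shrunk by $\delta$. One cannot simply rescale $T\fol_p$ by a constant factor up to $\Sigma_j$: the infinitesimal leaves lie in spheres $\Sp^\perp_p$ that contract to $\Sigma_j$, so a uniform rescaling would produce cone-type singularities and force $|\Sec|$ to blow up like $\delta^{-1}$ near $\Sigma_j$. I would therefore taper the rescaling of $T\fol_p$ from $\delta$ on $\partial U_j$ to $1$ along $\Sigma_j$ through a basic (leafwise-constant) cut-off function of the distance to $\Sigma_j$, built using foliated partitions of unity (\cite{CorroFernandezPerales22}), and spread the transition over a definite radial range so that the first- and second-derivative terms it introduces in the O'Neill curvature formula \th\ref{T: ONeills formula} and the submersion identities used in \th\ref{T: Collapse of regular foliation} remain bounded. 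Keeping the mixed-plane curvatures bounded is precisely what obstructs a simultaneous diameter bound: the transition necessarily stretches $U_j$ in the Gromov--Hausdorff sense, so the diameter is not controlled, which is why the rigidity approach of \cite{Corro2024} (in particular \cite[Claim 3.2.2]{Corro2024}) cannot be reproduced here.

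Finally, I would verify the two quantitative conclusions. Collapse holds at every point because there at least the positive-dimensional $\Sigma_j$-leaf directions (away from the tubes, the full leaf directions) are shrunk by $\delta$, so that $\mathrm{injrad}(\g_\delta)\to 0$ as $\delta\to 0$; here the hypothesis that the leaves have positive dimension is essential, as it rules out the Euler-characteristic obstruction that would arise from $0$-dimensional leaves. For the volume, every point carries at least one leaf direction scaled by $\delta$, whence $\Vol(M,\g_\delta)\le C\,\delta\to 0$, giving the volume-collapse statement. Bounded curvature follows by combining the three regimes: flatness of the leaves kills the dangerous $\tfrac{(1-\delta)}{\delta^2}\Sec(\g|_{T\fol})$ term on the principal part as in \th\ref{T: Collapse of regular foliation}, the $A$-tensor contributions are uniformly bounded on the compact manifold $M$, and the tapered transition regions near the $\Sigma_j$ are arranged to keep their extra curvature contributions bounded.
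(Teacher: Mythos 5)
Your overall strategy is the same as the paper's: disjoint saturated tubes around the isolated singular strata, a regular subfoliation (polarization) inside each tube given by the stratum-leaf directions, the regular collapse of \th\ref{T: Collapse of regular foliation} on the principal part, and a Cheeger--Gromov--style gluing. However, there is a genuine gap at exactly the point you flag as delicate: you claim you can taper the scaling of the infinitesimal directions from $\delta$ at $\partial U_j$ to $1$ at $\Sigma_j$ ``over a definite radial range'' while keeping curvature bounded. This is impossible. If $f_\delta(r)$ denotes the scaling factor as a function of the distance $r$ to the stratum, the mixed curvatures contain the term $(f_\delta'/f_\delta)^2$, so a bound $|\Sec|\leq\Lambda$ forces $|(\log f_\delta)'|\leq \sqrt{\Lambda}$, and integrating gives $|\log\delta| \leq \sqrt{\Lambda}\cdot(\text{width of the transition region})$. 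Hence the transition collar must have metric width at least $|\log\delta|/\sqrt{\Lambda}\to\infty$; over any fixed collar the curvature necessarily blows up like $|\log\delta|^2$. This is precisely why the paper, following \cite[Section 4]{CheegerGromov1986}, inserts the global rescaling step $\g_1=\log(\delta)^2\,\g$ on the complement of the first tube before shrinking along the regular subfoliations via the factors $\rho_{i,\delta}=\delta^{\log(\phi_i)/\log(1/2)}$: the $\log(\delta)^2$ blow-up stretches every transition collar to metric width of order $|\log\delta|$, which is exactly enough room for the taper to have bounded logarithmic derivative, while multiplying the ambient curvature by $\log(\delta)^{-2}\to 0$ so nothing else is harmed.

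Your proposal omits this rescaling, and the omission propagates: your volume estimate $\Vol(M,\g_\delta)\leq C\delta$ should be $\Vol(M,\g_\delta)\leq C\delta^{\ell}|\log(\delta)|^{m}$ (still tending to $0$, since powers beat logarithms), and the diameter blow-up you correctly anticipate is not a side effect of the taper ``stretching $U_j$'' on its own --- with your fixed-width collar nothing stretches --- but a forced consequence of the $\log(\delta)^2$ factor one must introduce to make the curvature bound true. So the architecture of your proof is right and matches the paper's, but the single missing idea (the global logarithmic rescaling) is the one that makes the bounded-curvature claim, the volume asymptotics, and the loss of the diameter bound all come out correctly.
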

\vspace*{1em}

Let $(M,\g,\fol)$ be a  non-trivial closed singular $m$-dimensional Riemannian foliation with flat leaves on a compact Riemannian manifold. Given a  leaf $L_p\in \fol$ of dimension $\ell <m$. Consider $B$ the connected component of the dimension stratum $\Sigma_\ell$ containing $L$ and $\pi\colon U\to B$ the closest point projection (this map is well defined when we choose the radius of the tubular neighborhood sufficiently small). Then via the exponential map and the homothetic transformations Lemma (see for example \cite[Theorem 2.1]{AlexandrinoRadeschi2016}), we can identify $p\colon U\to B$ with the normal bundle $\nu(B)\to B$ in a foliated fashion.

As shown in \cite{AlexandrinoRadeschi2017} there exists on $TM|_U$ three distributions $\mathcal{T}$, $\mathcal{N}$ and $\mathcal{K}$, where $\mathcal{K} = \ker(p_\ast)$. Moreover, $\mathcal{T}|_{L_p} = TL_p$, and $(\mathcal{T}\oplus \mathcal{N})|_{B}=TB$. With these distributions we can define a local chart system on $U$.

\begin{proposition}[Proposition 6.1. in \cite{AlexandrinoRadeschi2017}]\th\label{P: Local foliated coordinates of linearized-foliation}
Around any point $p\in B$ there is a neighborhood $W$ of $p$ in $B$
such that $(p^{-1}(W), \fol^{\ell}|_{p^{-1}(W)})$ is foliated diffeomorphic to a product
\[
(\D^{\ell} \times \D^{n-\ell} \times U_p, \D^\ell \times \{\mathrm{pts.}\} \times \fol^\ell_p)
\]
where $\ell = \dim(\fol|_B)$ and $n = \dim (B)$.
\end{proposition}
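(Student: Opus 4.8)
The plan is to promote the abstract distribution data $\mathcal{T},\mathcal{N},\mathcal{K}$ into an honest product chart, by first trivializing the base foliation on the stratum $B$ and then trivializing the normal bundle $\nu(B)$ in a way that keeps the infinitesimal foliations constant. First I would pass to the linear model: using the foliated identification of the projection $p\colon U\to B$ with the normal bundle $\nu(B)\to B$ recorded above (via the exponential map and the homothetic transformations lemma), it suffices to produce the desired chart for the linearized foliation $\fol^\ell$ on the total space of $\nu(B)$. Under this identification the three distributions become: $\mathcal{T}$ the leaf-tangent directions along $B$, $\mathcal{N}$ the transverse-in-$B$ directions with $(\mathcal{T}\oplus\mathcal{N})|_B=TB$, and $\mathcal{K}=\kernel(p_\ast)$ the fiber directions of the bundle.

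Next I would build the chart on the base. Since $B$ is a single dimension stratum, the restriction $\fol|_B$ is a regular Riemannian foliation with leaves of constant dimension $\ell$. By the local structure of regular (Riemannian) foliations I can choose a neighborhood $W$ of $p$ in $B$ and coordinates $(x,y)\in\D^\ell\times\D^{n-\ell}$ in which the plaques of $\fol|_B$ are exactly the slices $\D^\ell\times\{y\}$; here the $\D^\ell$ factor integrates $\mathcal{T}|_B$ and the $\D^{n-\ell}$ factor integrates $\mathcal{N}|_B$.

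The heart of the matter is then to trivialize $\nu(B)|_W$ so that the fiberwise infinitesimal foliations are simultaneously identified with a single model. Fixing the model fiber $(U_p,\fol^\ell_p)\subset\nu_p(L_p)$, I need a bundle trivialization $\nu(B)|_W\cong\D^\ell\times\D^{n-\ell}\times U_p$ carrying the infinitesimal foliation $\fol_q$ at every $q\in W$ onto the fixed $\fol^\ell_p$. I would construct it by parallel transport along the coordinate directions (first along $\D^\ell$, then along $\D^{n-\ell}$) with respect to the connection on $\nu(B)$ induced by the metric (Bott) connection of the Riemannian foliation. The decisive point is that, \emph{for the linearized foliation}, the holonomy along each leaf preserves a single infinitesimal foliation, so this parallel transport sends $\fol_q$ to $\fol^\ell_p$; this is precisely the defining property of $\fol^\ell$ and must be invoked here rather than any feature of the true foliation $\fol$. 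Unwinding the three steps, the leaf of $\fol^\ell$ through a point with coordinates $(x_0,y_0,v_0)$ becomes the set of $(x,y_0,w)$ with $x\in\D^\ell$ and $w$ in the leaf of $\fol^\ell_p$ through $v_0$, i.e.\ $\D^\ell\times\{y_0\}\times(\text{infinitesimal leaf through }v_0)$; this is exactly the product foliation $\D^\ell\times\{\mathrm{pts.}\}\times\fol^\ell_p$, giving the claimed foliated diffeomorphism.

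The main obstacle is this last trivialization: obtaining a normal-bundle trivialization that is at once compatible with the foliated chart on $B$ and constant on the infinitesimal foliations. A priori $\fol_q$ varies with $q$ through holonomy, and only the linearized foliation has been arranged so that this holonomy is trivial along leaves in the chart. Making the two trivializations — of $\fol|_B$ and of the infinitesimal foliations — compatible over the full chart $\D^\ell\times\D^{n-\ell}$, rather than over a single plaque, is where the care is required.
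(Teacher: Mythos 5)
First, note that the survey itself does not prove this proposition; it is quoted from \cite{AlexandrinoRadeschi2017}, where the chart is produced by composing flows of \emph{linearized vector fields} (for the leaf directions) together with foliation-compatible identifications in the remaining directions, the engine being that flows of linearizations preserve $\fol^\ell$ by construction. Your overall strategy --- reduce to $\nu(B)\to B$, take a foliated chart $(x,y)\in\D^\ell\times\D^{n-\ell}$ for the regular foliation $\fol|_B$, then trivialize the fibers compatibly with the fiberwise foliations --- is the same in outline, and the leafwise part of your construction is sound: transport with respect to the Ehresmann connection $\nabla^H$ entering the definition of the linearized foliation does act by foliated maps between infinitesimal foliations, which is indeed ``the defining property'' you invoke, so identifying the fibers over a single plaque $\D^\ell\times\{y_0\}$ works.

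The genuine gap is the transverse direction, exactly the point you flag but do not resolve. The Bott connection is only a \emph{partial} connection, defined along directions tangent to the leaves, so it provides no parallel transport along the $\D^{n-\ell}$ factor at all; and the holonomy of the full metric connection on $\nu(B)$ has no reason to carry $\fol_q$ onto $\fol_{q'}$ when $q,q'$ lie on different leaves of $\fol|_B$ --- nothing in the definition of $\fol^\ell$ constrains transverse connection holonomy, since that definition only governs transport along a fixed leaf $L$. So the decisive sentence of your argument (``this parallel transport sends $\fol_q$ to $\fol^\ell_p$; this is precisely the defining property of $\fol^\ell$'') is unjustified precisely where it is needed. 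What fills this hole is a different mechanism: by \th\ref{C: splitting of full infinitesimal foliation}, at $q\in B$ the infinitesimal foliation splits as a trivial point-leaf foliation in the $\mathcal{N}$ directions times a foliation in the $\mathcal{K}$ directions, so moving in the $\mathcal{N}$ direction \emph{inside a single slice} (via the homothetic transformation lemma, \cite[Theorem 2.1]{AlexandrinoRadeschi2016}) canonically identifies the fiberwise foliations over nearby leaves; alternatively, as in \cite{AlexandrinoRadeschi2017}, one uses flows of suitably chosen $p$-projectable linearized fields spanning $\mathcal{N}$, whose flows preserve $\fol^\ell$. A minor but symptomatic imprecision: your model fiber should sit in the fiber $p^{-1}(p)\cong\mathcal{K}_p$, of dimension $m-n$, whereas writing $(U_p,\fol^\ell_p)\subset\nu_p(L_p)$ conflates the fiberwise linearized foliation with the full infinitesimal foliation $\fol_p$, which lives on an $(m-\ell)$-dimensional space and already contains the $\mathcal{N}$ directions you are trying to trivialize separately.
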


\begin{remark}
From the proof this this Proposition one infers that the tangent spaces corresponding to the $\D^\ell$-factor is spanned by $\mathcal{T}$. This implies that $\mathcal{T}$ is integrable, and induces a foliation $\fol'_L\subset \fol$, and $\fol_L'|_B=\fol|_B$. The factor $\D^\ell\times \D^{m-\ell}\times \{0\}$ is mapped to $B$. Thus the tangent spaces of the $\D^{m-\ell}$-factor correspond  to $\mathcal{N}$. From these observations we get the following corollary.
\end{remark}

\begin{cor}\th\label{C: polarization foliation}
Let $(M,\g,\fol)$ be a  non-trivial closed singular $m$-dimensional Riemannian foliation. For a given leaf $L\in \fol$ of dimension $\ell$, there exists a tubular neighborhood $U$ of $B$, the connected component of the the dimensional stratum $\Sigma_\ell$ containing $L$, such that it admits a regular foliation $\fol_L'\subset \fol$ and $\fol_L'|_{B}=\fol_B$.
\end{cor}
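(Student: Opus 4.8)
The plan is to realize $\fol_L'$ by integrating the globally defined distribution $\mathcal{T}$ on the tubular neighborhood $U$. Recall from \cite{AlexandrinoRadeschi2017} that $\mathcal{T}$, $\mathcal{N}$, $\mathcal{K}$ are smooth distributions defined on all of $TM|_U$, with $\mathcal{K}=\ker(p_\ast)$, $\mathcal{T}|_{L_p}=TL_p$ and $(\mathcal{T}\oplus\mathcal{N})|_{B}=TB$. Since $B$ is a connected component of the dimension stratum $\Sigma_\ell$, every leaf meeting $B$ has dimension exactly $\ell$, so $\mathcal{T}$ has constant rank $\ell$ throughout $U$; in particular it is a smooth subbundle of $TU$ of rank $\ell$. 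The strategy is then: show $\mathcal{T}$ is involutive, integrate it by Frobenius to a regular foliation $\fol_L'$, and verify the inclusion $\fol_L'\subset\fol$ and the boundary identity $\fol_L'|_B=\fol_B$.

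First I would establish that $\mathcal{T}$ is involutive, which is precisely the content of the Remark following \th\ref{P: Local foliated coordinates of linearized-foliation}. In the local product chart $\D^\ell\times\D^{n-\ell}\times U_p$ supplied by \th\ref{P: Local foliated coordinates of linearized-foliation}, the distribution $\mathcal{T}$ is the tangent distribution to the $\D^\ell$-factor, whose integral manifolds are exactly the slices $\D^\ell\times\{\mathrm{pt}\}\times\{\mathrm{pt}\}$. Since these slices foliate the chart, $\mathcal{T}$ is locally integrable there, hence involutive on the chart. As $p\in B$ was arbitrary, the sets $p^{-1}(W)$ cover $U$ when $W$ ranges over a neighborhood cover of $B$, so involutivity of $\mathcal{T}$ holds on all of $U$.

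With $\mathcal{T}$ a smooth, constant-rank, involutive distribution on $U$, the global Frobenius Theorem produces a regular foliation $\fol_L'$ of $U$ whose leaves are the maximal connected integral manifolds of $\mathcal{T}$. Uniqueness of maximal integral manifolds guarantees that the local slice-foliations of the charts of \th\ref{P: Local foliated coordinates of linearized-foliation} patch consistently into $\fol_L'$, so no separate gluing argument is needed. To obtain $\fol_L'\subset\fol$ I would note that in each chart the $\D^\ell$-slices lie inside leaves of the pulled-back foliation: the product decomposition carries $\fol$ to $\D^\ell\times\{\mathrm{pts.}\}\times\fol_p^\ell$, so a slice $\D^\ell\times\{q\}\times\{x\}$ is contained in the single $\fol$-leaf through it, i.e.\ $\mathcal{T}\subset T\fol$ and every leaf of $\fol_L'$ sits inside a leaf of $\fol$. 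Finally, restricting to $B$ and using $\mathcal{T}|_{L_p}=TL_p$ together with the identification of the zero-section slice $\D^\ell\times\{q\}\times\{0\}\subset B$ with the corresponding leaf of $\fol|_B$ yields $\fol_L'|_B=\fol_B$.

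The main obstacle I anticipate is not the Frobenius step itself but verifying rigorously that $\mathcal{T}$ is a genuinely global, smooth, constant-rank distribution on $U$ and that its local integral manifolds assemble into a single regular foliation. Once one commits to $\mathcal{T}$ being the globally defined distribution of \cite{AlexandrinoRadeschi2017}, constancy of rank follows from $B\subset\Sigma_\ell$, involutivity is local, and the global Frobenius Theorem closes the argument. The remaining subtlety is checking that the rank $\ell$ persists as one moves off $B$ along the normal directions $\mathcal{K}$, which is exactly what the product structure of \th\ref{P: Local foliated coordinates of linearized-foliation} guarantees, since the $\D^\ell$-factor has dimension $\ell$ at every point of the chart.
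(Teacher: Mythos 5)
Your proposal is correct and follows essentially the same route as the paper: the paper's own justification (the remark preceding the corollary) likewise observes that the distribution $\mathcal{T}$ of \cite{AlexandrinoRadeschi2017} spans the tangent spaces of the $\D^\ell$-factor in the charts of \th\ref{P: Local foliated coordinates of linearized-foliation}, hence is integrable, and that its integral foliation $\fol_L'$ sits inside $\fol$ with $\fol_L'|_B=\fol|_B$. You simply make the Frobenius step and the constant-rank and patching checks explicit, which the paper leaves implicit.
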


\begin{remark}\th\label{R: Homoeomorphism type of polarization}
The leaves of $\fol_L'$ are diffeomorphic to $L$ by construction. Moreover, we have that for any $q\in B$ we have $L_q=L_q'$, and for  any $\tilde{q}\in U$ we have that $L_{\tilde{q}}'\subset L_{\tilde{q}}$. Actually we have that $\mathcal{T}_{\tilde{q}}\subset T_{\tilde{q}} L_{\tilde{q}}$. 
\end{remark}

\begin{remark}
In general given a finite covering $\{U_i\}_{i=1,\ldots,N}$ of $M$, when we denote $\fol_i$ the regular foliation on $U_i$ given by \th\ref{C: polarization foliation}, it is not clear that for a point $w\in U_i\cap U_j$  in a non-empty intersection, the leaf $L_{w,i}\in \fol_i$ contains or is contained in the leaf $L_{w,j}\in \fol_j$. That is, we cannot guarantee that the regular distributions $\mathcal{T}_i$ on $U_i$ and $\mathcal{T}_j$ on $U_j$ agree over the non-empty intersections. 
\end{remark}

Nonetheless, such an obstruction only happens when the strata are not closed. This can also be characterized with the help of the following results.

Before we recall a result from \cite{RadeschiThesis}, recall that given two round spheres $\Sp^k\subset \R^{k+1}$ and $\Sp^{n-k-1}\subset\R^{n-k}$ we can parametrize the join $\R^k\oplus\R^{n-k-1}\supset\Sp^k\star \Sp^{n-k-1}\cong \Sp^n$ as:
\[
\Sp^k\star \Sp^{n-k-1}:= \{(\cos(t)v,\sin(t)w)\mid t\in [0,\pi],\  v\in \Sp^k,\  w\in\Sp^{n-k-1} \}.
\]

\begin{proposition}[Proposition 3.0.5. in \cite{RadeschiThesis}]\th\label{P: Splitting of infinitesimal foliation}
Let $(\Sp^n,\fol,\sigma)$ be a singular Riemannian foliation on the round sphere. Then the $0$-dimensional stratum $\Sigma_0\subset \Sp^n$ is a totally geodesic subsphere $\Sp^k\subset \Sp^n$, and the foliation $\fol$ decomposes as the joint of two singular Riemannian foliations on round spheres:
\[
(\Sp^k,\fol_0)\star (\Sp^{n-k-1},\fol_1)
\]
under the following identification:
\[
\fol \ni\mathcal{L}_{(\cos(t)v,\sin(t)w)} = \{(\cos(t)v',\sin(t)w')\mid v'\in \mathcal{L}_v\in \fol_0\ w'\mathcal{L}_w \in \fol_1\}.
\]
The foliation $\fol_0$ has only $0$-dimensional leaves, and all the leaves in $\fol_1$ have positive dimension.
\end{proposition}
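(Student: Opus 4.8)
The plan is to isolate the geometric meaning of a point-leaf and then bootstrap through the infinitesimal foliation. First I would record the basic mechanism: if $p\in\Sigma_0$, so that $L_p=\{p\}$, then $T_pL_p=\{0\}$ and every geodesic $\gamma$ issuing from $p$ satisfies $\gamma'(0)\perp T_pL_p$ trivially; by the transnormality clause of \th\ref{D: Singular Riemannian foliation} the velocity $\gamma'(t)$ stays perpendicular to every leaf it meets. Hence the radial geodesics from $p$ are everywhere orthogonal to the leaves, which is exactly the statement that $d_p=d(\cdot,p)$, equivalently the height function $x\mapsto\langle x,p\rangle$ on $\Sp^n\subset\R^{n+1}$, is constant along leaves. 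Thus each point-leaf is a \emph{pole}: every leaf lies on a single distance sphere around it. Applying the same reasoning at the antipode shows $-p\in\Sigma_0$ as well, since the velocities at $-p$ of the radial geodesics from $p$ fill out all of $T_{-p}\Sp^n$ and are all normal to $L_{-p}$.

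Next I would show $\Sigma_0$ is a totally geodesic subsphere, by induction on $n$. At a point-leaf $p$ the normal space is all of $T_p\Sp^n$, so by the discussion of the infinitesimal foliation in Section~\ref{Singular Riemannian foliations} the foliation near $p$ is, via $\exp_p$, the cone over the infinitesimal foliation $\fol_p$ on the unit sphere $\Sp^{n-1}\subset T_p\Sp^n$. Consequently $\Sigma_0$ near $p$ is the image under $\exp_p$ of the cone over $\Sigma_0(\fol_p)$. By the inductive hypothesis $\Sigma_0(\fol_p)$ is a totally geodesic subsphere of $\Sp^{n-1}$, so its cone is a linear subspace of $T_p\Sp^n$ and $\exp_p$ carries it to a totally geodesic subsphere piece of $\Sp^n$ through $p$. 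Thus $\Sigma_0$ is, near each of its points, an open piece of a great subsphere of a fixed dimension; using that $\Sigma_0$ is the minimal (hence closed) stratum together with the pole property above to propagate these local pieces along great circles, I would conclude $\Sigma_0=\Sp^n\cap V$ for a linear subspace $V=\R^{k+1}$, i.e. $\Sigma_0=\Sp^k$.

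With $V$ fixed, set $W=V^\perp$ and $\Sp^{n-k-1}=\Sp^n\cap W$, and write each point in join coordinates $x=\cos t\,v+\sin t\,w$. The function $t=d(\cdot,\Sigma_0)$ is an infimum of the leaf-constant functions $d_p$, $p\in\Sp^k$, hence itself leaf-constant, so leaves never change their join parameter. Moreover the nearest point $v\in\Sp^k$ is leaf-constant: for $t\in(0,\pi/2)$ the equalities $\langle x,p\rangle=\langle x',p\rangle$ (which hold on a leaf because each $d_p$ is constant on leaves) give $\cos t\,\langle v,p\rangle=\cos t\,\langle v',p\rangle$ for all $p\in\Sp^k$, and since $\Sp^k$ spans $V$ and $\cos t\neq0$ this forces $v=v'$. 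Therefore every leaf lies in a single join fibre $\{v\}\star\Sp^{n-k-1}$. Taking $t=\pi/2$ shows $\Sp^{n-k-1}$ is saturated, so $\fol_1:=\fol|_{\Sp^{n-k-1}}$ is a singular Riemannian foliation for the round metric, and it has no point-leaves because $\Sigma_0\cap W=\Sp^k\cap\Sp^{n-k-1}=\varnothing$. Finally, at $p=v$ I would identify the leaf through $x=\exp_v(t\,w)$ with the cone over the $\fol_p$-leaf of the direction $w$ and match the $W$-part of $\fol_p$ with $\fol_1$; this produces the join formula $L_x=\{v\}\star\mathcal{L}_w$, and since $\Sigma_0$ consists of point-leaves, $\fol_0$ is the trivial point foliation on $\Sp^k$.

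The main obstacle is globalisation. Both the upgrade in the second step from local totally geodesic pieces to a single honest subsphere $\Sp^k$, and the matching in the third step of the pole-dependent infinitesimal data $\fol_p|_W$ with the single foliation $\fol_1$, require knowing that $\fol$ is genuinely invariant under the normal geodesic flow of $\Sigma_0$, i.e. that the equidistant tubes around $\Sigma_0$ carry congruent foliations. This homothety/holonomy invariance around the minimal stratum is what glues the many local cone pictures at different poles into one coherent spherical join, and establishing it cleanly rather than merely pointwise is the crux of the argument.
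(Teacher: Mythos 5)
First, a point of order: the paper does not prove this proposition at all --- it is quoted verbatim from Radeschi's thesis --- so there is no internal proof to compare you against; what follows judges your argument on its own terms. The peripheral steps of your proposal are sound: the pole property (every leaf lies on a distance sphere about each point-leaf, and the antipode of a point-leaf is again a point-leaf), the local cone picture at a point-leaf via the infinitesimal foliation, and the linear-algebra confinement of each leaf to a single join fibre $\{v\}\star\Sp^{n-k-1}$ \emph{once} $\Sigma_0=\Sp^n\cap V$ is known. But the proposal never proves the statement on which both halves of the proposition rest, and which you yourself defer in your closing paragraph as ``the crux'': that at a point-leaf $p$ the cone structure is \emph{global}, i.e.\ radial transport $\exp_p(tu)\mapsto\exp_p(t'u)$ carries leaves onto leaves for all $t,t'\in(0,\pi)$, not merely for radii below the small scale on which the homothety discussion of Section~\ref{Singular Riemannian foliations} (Molino's homothetic transformation lemma) applies --- the radius of validity there is constrained by the local structure of the foliation, not only by the injectivity radius $\pi$. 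Without this, closedness of $\Sigma_0$ plus the local cone picture only shows that each \emph{connected component} of $\Sigma_0$ is a great subsphere, so you cannot rule out that $\Sigma_0$ is a disjoint union of several such spheres; and you cannot identify the radial projection of a leaf at join parameter $t$ with an $\fol_1$-leaf at parameter $\pi/2$. So the two global conclusions of the proposition are exactly the two places where the gap sits.

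This gap is the actual mathematical content, not a routine globalisation, and your own third paragraph makes the reduction explicit: restricting $\fol$ to the saturated great sphere $\Sp^n\cap(\R v\oplus W)$ reduces the whole proposition to the $k=0$ suspension statement, which is the missing lemma itself. One way to fill it (for closed leaves): for a leaf $L\subset S(p,t)$ and $t'\in(0,\pi)$, the radially transported set $F_{t'}(L)$ equals $\{x\in S(p,t')\mid d(x,L)=|t-t'|\}$ by the equality case of the triangle inequality through $p$, since minimizing geodesics from $p$ are unique up to length $\pi$; because $d(\cdot,p)$ and $d(\cdot,L)$ are constant on leaves, this set is a union of leaves, and transporting any leaf $L''\subset F_{t'}(L)$ back to level $t$ by the same argument forces $L''=F_{t'}(L)$, so radial transport maps leaves onto leaves. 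Note also that the statement as quoted does not assume closed leaves, so the leaf-constancy of $d(\cdot,L)$ invoked here needs Molino's closure theorem (or a separate argument) in the non-closed case --- a subtlety your sketch does not touch. This equidistance/open--closed mechanism is the engine of Radeschi's proof, and it is precisely what your proposal identifies but does not supply.
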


\begin{remark}
The splitting in \th\ref{P: Splitting of infinitesimal foliation} is up to a foliated isometry.
\end{remark}

Since the infinitesimal foliation $(\nu_p(L_p),\fol_p)$  of a closed foliation $(M,\fol)$ at a point $p\in M$ is given by the cone of the infinitesimal foliation on the normal sphere $(\Sp_p^\perp,\fol_p)$, then from \th\ref{P: Splitting of infinitesimal foliation} we get the following property.

\begin{cor}\th\label{C: splitting of full infinitesimal foliation}
Let $(M,\fol,\g)$ be a closed singular Riemannian foliation on an $m$-di\-men\-sio\-nal manifold. For $p\in M$ with $\dim(L_p)=\ell$ and such that the connected component of $\Sigma_\ell$ has dimension $n$, we have that the infinitesimal foliation $(\nu_p(L_p),\fol_p,(\g_p)|_{\nu_p(L_p)})$ is foliated isometric to a product $(\R^{n-\ell},\fol_0)\times (\R^{m-n},\fol_1)$ where $n$ is the dimension of the connected component of the dimensional stratum that contains $L_p$, $\fol_0$ is the trivial point-leaves foliation, i.e. each leaf of $\fol_0$ consists of a single point, and $\fol_1$ contains leaves of positive dimension, except at the origin $0\in\R^{m-k}$, where the leaf consists of only the origin. 
\end{cor}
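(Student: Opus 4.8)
The plan is to obtain the splitting by applying Radeschi's join decomposition on the unit normal sphere and then coning off to the Euclidean normal space. First I would recall from Section~\ref{Singular Riemannian foliations} that $\nu_p(L_p)\cong\R^{m-\ell}$ and that the infinitesimal foliation $(\nu_p(L_p),\fol_p)$ is the Euclidean cone over the singular Riemannian foliation $(\Sp^\perp_p,\fol_p)$ on the round unit normal sphere $\Sp^\perp_p\cong\Sp^{m-\ell-1}$: the leaf through $rv$ (with $r>0$ and $v\in\Sp^\perp_p$) is $\{rw\mid w\in\mathcal{L}_v\}$, while the origin is its own leaf. Applying \th\ref{P: Splitting of infinitesimal foliation} to $(\Sp^\perp_p,\fol_p)$ produces a foliated isometric join decomposition
\[
(\Sp^\perp_p,\fol_p)\cong(\Sp^{k},\fol_0)\star(\Sp^{m-\ell-k-2},\fol_1),
\]
in which $\Sp^{k}$ is the totally geodesic $0$-dimensional stratum $\Sigma_0\subset\Sp^\perp_p$, the foliation $\fol_0$ has only point leaves, and every leaf of $\fol_1$ has positive dimension.

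The heart of the argument is to show that $k=n-\ell-1$, i.e. that the $0$-dimensional stratum of $\fol_p$ is exactly the set of unit normal directions tangent to the stratum component $B$. Here I would use that, for $v\in\Sp^\perp_p$ and $q=\exp_p(\varepsilon v)$ with $\varepsilon$ small, the dimension of the infinitesimal leaf $\mathcal{L}_v$ equals $\dim(L_q)-\ell$, so $\mathcal{L}_v$ is $0$-dimensional precisely when $L_q$ again lies in the dimension stratum $\Sigma_\ell$, i.e. when $q\in B$ and hence $v\in T_pB\cap\nu_p(L_p)$. Since $T_pL_p\subset T_pB$, this intersection is the orthogonal complement $T_pB\ominus T_pL_p$, of dimension $n-\ell$. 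Thus the cone over $\Sigma_0$ is the linear subspace $T_pB\cap\nu_p(L_p)\cong\R^{n-\ell}$, forcing $k=n-\ell-1$ and making the complementary join factor a sphere $\Sp^{m-n-1}$.

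Finally I would cone off the join. Writing points of $\nu_p(L_p)$ as $(s_1v,s_2w)$ with $v\in\Sp^{k}$, $w\in\Sp^{m-n-1}$ and $s_1,s_2\ge0$ identifies $\R^{m-\ell}$ isometrically with the orthogonal sum $\R^{n-\ell}\oplus\R^{m-n}$, and coning the join-foliation formula in \th\ref{P: Splitting of infinitesimal foliation} sends the leaf through $(s_1v,s_2w)$ to $\{(s_1v',s_2w')\mid v'\in\mathcal{L}_v,\ w'\in\mathcal{L}_w\}$; this is exactly the product of the cone of $\fol_0$ with the cone of $\fol_1$. The cone of $\fol_0$ is the trivial point-leaves foliation on $\R^{n-\ell}$ (since all its leaves are points), while the cone of $\fol_1$ is a foliation on $\R^{m-n}$ whose leaves are positive dimensional away from the origin. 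This yields the desired foliated isometry $(\nu_p(L_p),\fol_p)\cong(\R^{n-\ell},\fol_0)\times(\R^{m-n},\fol_1)$.

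I expect the main obstacle to be the identification $k=n-\ell-1$ in the second paragraph: making rigorous the claim that the $0$-dimensional directions of the infinitesimal foliation coincide with the directions tangent to the stratum $B$ requires the homothetic transformation lemma together with a careful comparison of the leaf dimensions of nearby leaves (equivalently, that the distribution $\mathcal{T}\oplus\mathcal{N}$ from \th\ref{P: Local foliated coordinates of linearized-foliation} restricts to $T_pB$, so that $\mathcal{N}|_p$ is precisely the cone over $\Sigma_0$), whereas the coning step is essentially formal once this is in place.
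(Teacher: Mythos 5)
Your proof is correct and takes essentially the same route as the paper, which obtains the corollary directly by viewing $(\nu_p(L_p),\fol_p)$ as the cone over the normal-sphere foliation $(\Sp^\perp_p,\fol_p)$ and invoking the join splitting of Proposition~\ref{P: Splitting of infinitesimal foliation}. Your extra steps (identifying $k=n-\ell-1$ by comparing leaf dimensions of nearby leaves, and coning off the join to get the product) simply make explicit the details the paper leaves implicit.
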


\begin{remark}
The $\R^n$-factor corresponds to normal directions to $L_p$ at $p$ which are tangent to the stratum containing $L_p$.
\end{remark}

\begin{remark}
Let $(M,\fol,\g)$ be a closed singular Riemannian foliation, and fix $p\in M$. We consider the foliated decomposition of infinitesimal foliation $(\nu_p(L_p),\fol_p) \cong (\R^n,\fol_0)\times (\R^{n-\ell},\fol_1)$ given by \th\ref{C: splitting of full infinitesimal foliation}. We point out that when the foliation $\fol_1$ is not regular, then $p$ is in the closure of some singular stratum different to the one that $p$ belongs to. Conversely, if a singular stratum is not closed, then for any point $p$ in the closure of the singular stratum, we have that the foliation $\fol_1$ is not regular. Thus a  closed singular Riemannian foliation has all singular strata closed, and thus isolated from one another, if and only if, for any point, the foliation $\fol_1$ is regular. 
\end{remark}

\begin{lemma}\th\label{L: existence of cover with compatible regular foliation}
Let $(M,\fol,\g)$ be a closed singular Riemannian foliation on a compact manifold, such that for each point $p\in M\setminus M_\reg$ the following holds for the infinitesimal foliation: the foliation $\fol_1$ in \th\ref{C: splitting of full infinitesimal foliation} is a regular foliation. Then there exists a finite open cover $\{U_i\}_{i=0,\ldots, N}$ such that for each index $i$, there exists a regular foliation $\fol_i$ on $U_i$, and for $w\in U_i\cap U_j$, we have that the leaf $L_{w,i}\in \fol_i$ is either contained in, or contains $L_{w,j}\in \fol_j$.
\end{lemma}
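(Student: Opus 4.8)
The plan is to exploit the fact, recorded in the remark preceding the statement, that the hypothesis on $\fol_1$ is equivalent to all singular strata being closed, and hence pairwise isolated. This reduces the problem to assembling the local regular foliations furnished by \th\ref{C: polarization foliation} along \emph{disjoint} tubular neighborhoods, so that the only overlaps that occur are with the regular part, where the required leaf containment is built into \th\ref{R: Homoeomorphism type of polarization}.

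First I would fix the dimension stratification of $\fol$. On a compact manifold a closed singular Riemannian foliation has only finitely many dimension strata, and each stratum is an embedded submanifold with finitely many connected components; under our hypothesis every singular stratum is closed in $M$ (by the equivalence in the preceding remark and \th\ref{C: splitting of full infinitesimal foliation}), so each connected component $B_1,\dots,B_N$ of a singular stratum is compact. Being pairwise disjoint closed subsets of the compact manifold $M$, they satisfy $d(B_i,B_j)=:\epsilon_{ij}>0$ for $i\neq j$. Set $\epsilon:=\tfrac12\min_{i\neq j}\epsilon_{ij}$.

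Next I would build the cover. Put $U_0:=M_\reg$ and $\fol_0:=\fol|_{M_\reg}$, which is regular by definition of the regular stratum. For each singular component $B_i$ apply \th\ref{C: polarization foliation} to obtain a tubular neighborhood of $B_i$ carrying a regular foliation $\fol_i\subset\fol$ with $\fol_i|_{B_i}=\fol|_{B_i}$; shrinking the tubular radius below $\epsilon$ yields open sets $U_1,\dots,U_N$ that are pairwise disjoint and satisfy $U_i\setminus B_i\subset M_\reg$ (here the isolated-strata hypothesis is used: transverse to $B_i$ the foliation $\fol_1$ is regular, so a small punctured tube around $B_i$ meets no other singular stratum). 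The collection $\{U_0,U_1,\dots,U_N\}$ is then a finite open cover of $M$, since every regular point lies in $U_0$ and every singular point lies in the unique $B_i\subset U_i$ containing it. It then remains to verify compatibility on overlaps: for distinct $i,j\geq 1$ we have $U_i\cap U_j=\varnothing$ by construction, so no condition is imposed, and the only nontrivial overlaps are $U_0\cap U_i$; a point $w$ there is a regular point lying in the tube of $B_i$, so $L_{w,0}=L_w$ is the regular leaf of $\fol$ through $w$, while by \th\ref{R: Homoeomorphism type of polarization} the leaf $L_{w,i}=L'_w$ of $\fol_i$ satisfies $L_{w,i}\subset L_w=L_{w,0}$, as required.

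The main obstacle, and the only place the hypothesis truly enters, is ensuring that the tubular neighborhoods of distinct singular strata can be made disjoint; this is exactly the configuration that fails when strata are not closed, and it is what forces the reduction to overlaps with the regular part. The two routine points to check with care are the finiteness of the stratification and that \th\ref{C: polarization foliation} retains its conclusions—regularity of $\fol_i$ and the containment $L'_w\subset L_w$—after the radius is shrunk, both of which follow from the homothetic scaling used to construct the tube.
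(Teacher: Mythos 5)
Your proof is correct and takes essentially the same approach as the paper's: deduce from the hypothesis (via the preceding remark) that the singular strata are closed and pairwise isolated, surround their finitely many compact components by pairwise disjoint tubular neighborhoods carrying the regular foliations of \th\ref{C: polarization foliation}, and handle the only remaining overlaps---those with the regular part---via the containment $L_{w,i}\subset L_w=L_{w,0}$ from \th\ref{R: Homoeomorphism type of polarization}. The only cosmetic difference is that you take $U_0=M_\reg$ whereas the paper takes $U_0$ to be the complement of the closed half-radius tubes; both choices yield the same cover properties and the same compatibility check.
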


\begin{proof}
From our hypothesis we see that each singular stratum is isolated from other singular strata. If not, then we would be able to find a singular stratum and a point in this stratum , such that  the infinitesimal foliation  at this point would have singular leaves which do not correspond to directions tangent to the stratum.

For each connected component of a singular stratum, by \th\ref{C: polarization foliation} we can find a tubular neighborhood $U_i$ around it with a regular foliation. By taking the tubular neighborhoods to be small enough, we can guarantee that these foliated tubular neighborhoods $U_1,\ldots,U_N$ are pairwise disjoint. 

Now assume that the smallest tubular neighborhood has radius  $r>0$. Let $U_0\subset M_\reg$ be the complement in $M$ of the union of the closed tubular neighborhoods of radius $r/2$ over the singular strata, and observe that $\fol|_{U_0}$ induces a regular foliation.

Now we observe that since only $U_0$ intersects any other element in the open cover, the non-empty intersections are $U_i\cap U_0$. In this case by construction we have that for $w\in U_i\cap U_0$ we have $L_{w,i}\subset L_w = L_{0,w}$. 
\end{proof}

\begin{lemma}\th\label{L: Polarization of B-foliations}
Let $(M,\fol,\g)$ be a  non-trivial closed Singular $m$-dimensional Riemannian foliation with flat leaves on a compact Riemannian manifold. Given a  leaf $L\in \fol$ of dimension $\ell <m$, consider the tubular neighborhood $U$ of the strata containing $L_p$ with the regular foliation $\fol_L'\subset \fol$ given from \th\ref{C: polarization foliation}. Then for any $p\in L$, for the system of local coordinates $(x,y)=(x_1,\ldots,x_{\ell},y_1,\ldots,y_{n-\ell},z_1,\ldots,z_{m-n})$ in \th\ref{P: Local foliated coordinates of linearized-foliation}, we have that
the leaves of $\fol_L'$ consist of points where $y_j = \mathrm{constant}_j$ and $z_i=\mathrm{constant}_i$, and  we have
\[
g(x,y,z) = g(y,z) =\begin{pmatrix}
A(y,z) & B(y,z)\\
B(y,z) & \tilde{C}(y,z)
\end{pmatrix},
\]
where $A_{j,i} = g(\partial/\partial x_j,\partial/\partial x_i)$, $B_{j,i} = g(\partial/\partial x_j,\partial/\partial \alpha_i)$  and \linebreak$\tilde{C}_{j,i} = g(\partial/\partial \alpha_j,\partial/\partial \beta_i)$ for $\alpha=y,z$.
\end{lemma}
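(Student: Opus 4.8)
The plan is to prove the two assertions in turn: that the leaves of $\fol_L'$ are the coordinate slices $\{y=\mathrm{const},\ z=\mathrm{const}\}$, and that in these coordinates every block of $\g$ is independent of the leaf coordinate $x$.

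The first assertion I would read off directly from \th\ref{P: Local foliated coordinates of linearized-foliation}. The foliated diffeomorphism there identifies $(p^{-1}(W),\fol^\ell|_{p^{-1}(W)})$ with the product $(\D^\ell\times\D^{n-\ell}\times U_p,\ \D^\ell\times\{\mathrm{pts.}\}\times\fol^\ell_p)$, and by the remark following that proposition the $\D^\ell$-factor is spanned by $\mathcal{T}$, whose integral manifolds are precisely the leaves of $\fol_L'$. Hence each leaf of $\fol_L'$ is the slice obtained by fixing the $\D^{n-\ell}$-coordinate $y$ and the $\fol^\ell_p$-coordinate $z$, which is the claim $y_j=\mathrm{const}_j$, $z_i=\mathrm{const}_i$.

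For the second assertion the key reduction is the observation that, since coordinate fields commute, $\partial g_{\mu\nu}/\partial x_k=(\mathcal{L}_{\partial/\partial x_k}\g)(\partial_\mu,\partial_\nu)$; thus the block structure $\g=\g(y,z)$ is equivalent to the single statement that each $\partial/\partial x_k$ is a Killing field of $\g$ on $U$. I would therefore aim to produce a local isometric $\R^\ell$-action on $U$ whose orbit distribution is $\mathcal{T}$ and whose orbits are the leaves of $\fol_L'$, and then choose the $x$ as the orbit parameters, so that the $\partial/\partial x_k$ become the (Killing) action fields. This is where the flat-leaf hypothesis enters: each leaf, being flat, carries a transitive local action by its own translation group, and the transnormal (bundle-like) property of the Riemannian foliation forces the normal spacing of the leaves to be preserved under these leafwise translations, so that they should assemble into genuine isometries of $(U,\g)$. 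This is precisely the homogeneity of the linearized foliation for flat leaves, in the spirit of the groupoid-action description of the linearized foliation; note also that such a picture is consistent with the hypothesis, since an orbit of an abelian group acting by isometries is automatically flat in the induced metric.

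The main obstacle is exactly the promotion of the intrinsic leafwise translations to ambient isometries, equivalently the identity $\partial B/\partial x=0$ (the leaf--leaf block $A$ being controlled by choosing the $x_k$ parallel along the flat leaves, so that $\partial A/\partial x=0$). Concretely, writing $\nabla_{\partial_{x_k}}\partial_{x_i}=\II(\partial_{x_k},\partial_{x_i})$, the leaf term vanishing by parallelism, and expanding $\partial_{x_k}B_{i\alpha}=\g(\II(\partial_{x_k},\partial_{x_i}),\partial_\alpha)+\g(\partial_{x_i},\nabla_{\partial_\alpha}\partial_{x_k})$, one has to show that the second fundamental form of the leaves and the transverse variation of $\mathcal{T}$ cancel along each leaf; I expect this to follow from the Gauss equation (the intrinsic flatness constraining $\II$) together with the equidistance of the leaves. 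Once $\partial B/\partial x=0$ is established, $\partial\tilde C/\partial x=0$ is automatic: the bundle-like condition says the transverse (Schur-complement) metric $\tilde C-B^{\top}A^{-1}B$ depends only on $(y,z)$, and with $A$ and $B$ already $x$-independent this forces the same for $\tilde C$.
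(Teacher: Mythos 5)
Your first assertion and its justification---reading off the leaves of $\fol_L'$ as the slices $\{y=\mathrm{const},\ z=\mathrm{const}\}$ from the product chart of \th\ref{P: Local foliated coordinates of linearized-foliation} together with the remark that $\mathcal{T}$ spans the $\D^\ell$-factor---is exactly what the paper does. The problem is the second assertion, where there is a genuine gap: the identity $\partial B/\partial x=0$, which you yourself single out as ``the main obstacle,'' is never established; you only state that you \emph{expect} it to follow from the Gauss equation and equidistance. That expectation is doing all the work. Promoting the intrinsic translations of a flat leaf to ambient Killing fields is precisely the content of the lemma (by your own reduction it is equivalent to $\g=\g(y,z)$), so leaving it as a plausible cancellation means there is no proof. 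Your closing step for $\tilde C$ is also unsound as stated: the transnormal (bundle-like) hypothesis is on $\fol$, not on $\fol_L'$, and near the stratum the regular leaves of $\fol$ are strictly larger than the $x$-slices---they are graphs over $x$ and part of the $z$-variables---so the Schur-complement argument ``the transverse metric of the $x$-slices is basic'' is not licensed by the hypotheses, since $\fol_L'$ is not known to be a Riemannian foliation for $\g$.

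For comparison, the paper's proof does not attempt any tensorial cancellation. It first uses the $\g$-orthogonality of $\mathcal{N}$ and $\mathcal{K}$ to show that integral curves of $\mathcal{N}$ project to curves equidistant to the stratum $B$, hence that $\mathcal{N}$ is normal to \emph{every} leaf of $\fol$ in the neighborhood; it then observes that the full leaf $L_q$ through a nearby point is locally a graph over the $(x,z)$-variables (with some $z$-coordinates frozen), and concludes the $x$-independence of the metric components from the fact that the induced metric on the flat leaf $L_q$ is homogeneous. In other words, the ambient input you are missing is supplied there by combining normality of $\mathcal{N}$ to all nearby leaves with homogeneity of the flat leaf metric, rather than by a Gauss-equation computation. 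If you want to salvage your route, you must actually carry out the cancellation in your expansion of $\partial_{x_k}B_{i\alpha}$, and you would in any case first need the paper's observation that $\mathcal{N}$ is normal to all nearby leaves---without it, the ``transverse variation of $\mathcal{T}$'' term has no reason to pair against the second fundamental form $\II$ of the leaves.
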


\begin{proof}
We set $B$ the connected component of the strata $\Sigma_\ell$ that contains $L$. And we consider $\tilde{W}$ a sufficiently tubular neighborhood of $L$, and set $W = \pi^{-1}(\tilde{W}\cap B)$, where $\pi\colon U\to B$ is the closest point projection.

Recall that for the  integrable regular distributions $\mathcal{T}$, $\mathcal{N}$ and $\mathcal{K}$ we have $\mathcal{N}(q)$ is $g(q)$-orthogonal to $\mathcal{K}(q)$ on $W$. Moreover $n:=\dim(\mathcal{N})=\dim(B)-k$, $k:=\dim(\mathcal{K}) =\dim(M)-\dim(B)$.

From these distributions we obtain by \th\ref{P: Local foliated coordinates of linearized-foliation} over a point $q\in W$ a local coordinate system $(\tilde{x}_1,\ldots,\tilde{x}_\ell,y_1,\ldots,y_{n-\ell},z_1,\ldots,z_k)$.

We also observe that since $\mathcal{N}$ is orthogonal to $\mathcal{K}$, then an integral curve $\gamma$ tangent to $\mathcal{N}$ gets projected to a curve which is equidistant to $B$. This means that $\mathcal{N}$ is normal to all leaves of $\fol$ in $W$.

We point out that for $q\in W\setminus (W\cap B)$ we have that with respect to the coordinates in \th\ref{P: Local foliated coordinates of linearized-foliation} around $q$ we have that  $(x,\mathrm{cte}_1,\mathrm{cte}_2)$ are contained in the Leaf $L_q$, where the constants $\mathrm{cte}_i$ are determined by $q$. Observe that locally the full leaf $L_q$ is determined by a graph over the variables $x$ and $z$, but some of the $z_j$-factors are constant. This implies that the foliated Riemannian metric does not depend on the variable $x$ since it a homogeneous metric on $L_q$. 
\end{proof}

\begin{lemma}
Consider $(M,\fol,\g)$ and $L\in\fol$ as in the previous Lemma. Assume that $L$ is diffeomorphic to an $\ell$-dimensional torus. Then the leaves of $\fol_L'$  are also flat.
\end{lemma}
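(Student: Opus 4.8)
The plan is to read the flatness of the $\fol_L'$-leaves directly off the local normal form of the metric from \th\ref{L: Polarization of B-foliations}, using the torus hypothesis only to guarantee that this normal form is genuinely independent of the leaf-direction coordinates.

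First I would fix a leaf $L'\in\fol_L'$ together with a point $q\in L'$, and recall from \th\ref{R: Homoeomorphism type of polarization} that $L'$ is diffeomorphic to $L\cong T^\ell$. In the adapted coordinates $(x,y,z)=(x_1,\dots,x_\ell,y_1,\dots,y_{n-\ell},z_1,\dots,z_{m-n})$ of \th\ref{P: Local foliated coordinates of linearized-foliation}, the leaf $L'$ is the coordinate slice $\{y=y_0,\ z=z_0\}$, with tangent frame $\partial/\partial x_1,\dots,\partial/\partial x_\ell$. Hence the induced metric on $L'$ is precisely the upper-left block $A$ of the normal form, namely
\[
\g|_{L'}=\sum_{i,j=1}^{\ell} A_{ij}(y_0,z_0)\,dx_i\,dx_j.
\]

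Second, I would invoke \th\ref{L: Polarization of B-foliations}, which records that the metric coefficients do not depend on the variable $x$. This is exactly where the assumption $L\cong T^\ell$ enters: a flat torus is homogeneous under its translation action in the $x$-directions, and it is this homogeneity of $L_q$ that forces the foliated metric to be invariant under translation in $x$ on the whole tubular neighborhood. Consequently $A(y_0,z_0)$ is a \emph{constant} positive-definite symmetric matrix along $L'$, so that $\g|_{L'}$ is a constant-coefficient metric in the chart.

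Finally, I would conclude by the elementary observation that a Riemannian metric with constant coefficients in a coordinate chart has vanishing Christoffel symbols, and hence vanishing Riemann curvature tensor, on that chart. Covering the compact leaf $L'$ by such adapted charts then yields $\Sec(\g|_{L'})\equiv 0$; equivalently, the translations in $x$ realize $T^\ell$ as a transitive group of isometries of $L'$, exhibiting $\g|_{L'}$ as a left-invariant metric on the abelian Lie group $T^\ell$, which is flat. Thus every leaf of $\fol_L'$ is a flat torus. The step I expect to be the main obstacle is the second one: securing the $x$-independence of the metric. For a general Bieberbach leaf the metric need not be homogeneous along the leaf directions, so the block $A$ could vary along $L'$ and the constant-coefficient argument would fail; the torus assumption is precisely what excludes this. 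The crux is therefore to propagate the homogeneity of $L\cong T^\ell$ to genuine $x$-invariance of $\g$ on the tubular neighborhood, as encoded in \th\ref{L: Polarization of B-foliations}, after which flatness is immediate.
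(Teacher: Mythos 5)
Your proof is correct, but it takes a genuinely different route from the paper's. You read flatness off the coordinate normal form: by Lemma~\ref{L: Polarization of B-foliations} the metric in the adapted chart is independent of $x$, so along each leaf $\{y=y_0,\,z=z_0\}$ of $\fol_L'$ the induced metric is the constant-coefficient block $A(y_0,z_0)$, whence the Christoffel symbols and the curvature vanish chart by chart. The paper never touches the normal form and argues globally instead: by Remark~\ref{R: Homoeomorphism type of polarization} each leaf $L_q'$ is a subtorus of the compact flat manifold $L_q$; passing to the flat Riemannian universal cover $(\R^k,\widetilde{\g_{TL_q}})$, the lift of $L_q'$ is a linear subspace spanned by integer vectors, and the restriction of the flat metric to such a linear subspace is flat. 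The trade-off is where the torus hypothesis does its work. As you yourself observe, Lemma~\ref{L: Polarization of B-foliations} as stated would hand you the conclusion for arbitrary flat (Bieberbach) leaves, with no torus assumption at all; the hypothesis enters your argument only through the justification of that lemma's $x$-independence, which rests on homogeneity of the metric on $L_q$ --- something that among compact flat manifolds is available precisely for tori. So in your route all the real content, and the only genuine use of the torus hypothesis, is hidden inside the previous lemma, and your proof inherits whatever burden its proof carries. The paper's route is self-contained on this point and makes the role of the hypothesis transparent (subtori of flat tori lift to linear subspaces of Euclidean space); it also yields slightly more structural information, namely that the leaves of $\fol_L'$ sit inside the flat leaves as linear subtori, rather than merely being flat in the induced metric.
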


\begin{proof}
We have that for $q\in U$, the leaf $L_q'\in \fol_L'$ is contained in the flat space $(L_q,\g_{TL_{q}})$. By \th\ref{R: Homoeomorphism type of polarization}, we have then a subtorus $L_q'$ of the flat manifold $L_q$. Assume that $k=\dim(L_q)\geq \ell$. Passing to the flat Riemannian universal cover $(\R^k,\widetilde{\g_{TL_{q}}})$ of $(L_q,\g_{TL_q}$, the submanifold $L_q'$ corresponds to a linear subspace $A_q$ being spanned by vectors with integer entries. This implies that the metric induced by $\g_{TL_{q}}$ on $L_q'$ is flat, since it is given by the metric $\widetilde{\g_{TL_{q}}}$ induced on the linear subspace $A_q$.
\end{proof}

\begin{lemma}
Let $(M,\fol,\g)$ be a closed singular Riemannian foliation as in \th\ref{L: existence of cover with compatible regular foliation}. Furthermore, assume that the leaves of the foliation are flat manifolds with respect to the induced metric. Let $\{U_i\}_{i=1,\ldots,N}$ be the open cover with regular foliations given by \th\ref{L: existence of cover with compatible regular foliation}, and  $p\in U_i$ be contained in a singular leaf. Then under the coordinates presented in \th\ref{L: Polarization of B-foliations} the regular leaves in $U_i\cap U_0$ are determined by, after a linear change of coordinates, by having some $z_j$-coordinates constant.
\end{lemma}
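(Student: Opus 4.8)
The plan is to pass to the foliated product coordinates, reduce the assertion to the normal ($z$) directions, and then linearize using the flat structure of the leaves. First I would fix $q\in U_i\cap U_0$ and read off the local model: combining the product coordinates of \th\ref{P: Local foliated coordinates of linearized-foliation} with the splitting of the infinitesimal foliation in \th\ref{C: splitting of full infinitesimal foliation}, in the coordinates $(x,y,z)$ of \th\ref{L: Polarization of B-foliations} the foliation $\fol$ is foliated diffeomorphic to $\D^\ell\times\{\mathrm{pts}\}\times\fol_1$, where the $\D^\ell$-factor carries the polarization $\fol_L'$ (the distribution $\mathcal T$, i.e. the $x$-directions), the middle factor $\mathcal N$ (the $y$-directions) is a single point on each leaf, and $\fol_1$ is the infinitesimal foliation in the $\mathcal K$-directions (the $z$-space). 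Since $p$ lies in a singular leaf and all strata are isolated, \th\ref{C: splitting of full infinitesimal foliation} guarantees that $\fol_1$ is regular off the origin. Hence every regular leaf in $U_i\cap U_0$ has all $y_j$ constant and meets each slice $\{x=\mathrm{c},\,y=\mathrm{c}\}$ in exactly one leaf of $\fol_1$, and the statement reduces to showing that, after a linear change of the $z$-coordinates, the leaves of $\fol_1$ are cut out by holding some of the $z_j$ constant.

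Next I would invoke the flat-leaf hypothesis. By \th\ref{R: Homoeomorphism type of polarization} we have $\mathcal T_q\subset T_qL_q$, and by \th\ref{L: Polarization of B-foliations} the metric induced on $L_q$ is independent of $x$; thus $L_q$ is flat and the polarization sits inside it exactly as in the torus lemma. Passing to the flat Riemannian universal cover of $L_q$, the polarization develops to an integer-spanned linear subspace, and the complementary $\fol_1$-directions develop to a complementary linear subspace. I would then choose the linear change of coordinates on $(y,z)$ that aligns $\mathcal T$, the $\fol_1$-tangent part of $TL_q$, the remaining $\mathcal K$-directions and $\mathcal N$ with coordinate subspaces; in these coordinates the developed leaf becomes the affine slice obtained by setting the complementary $z_j$ equal to constants, which is the asserted description.

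The main obstacle is that the leaves of $\fol_1$ are a priori contained in round spheres of the $z$-space and are therefore curved as subsets of $\R^{m-n}$, so that no linear change of the \emph{ambient} $z$-coordinates can straighten them. The resolution is that the linearization must be carried out in the flat affine chart provided by the developing map of $L_q$: flatness trivializes the holonomy of the leaf, and in this chart the spherical $\fol_1$-leaves unwrap to linear subspaces, so that the constancy of the complementary $z_j$ cuts them out. The delicate technical step is to produce this linearizing change of coordinates uniformly on the overlap, depending only on the component $B$ of the stratum and not on the individual leaf, and to check that it is compatible with the block form of the metric in \th\ref{L: Polarization of B-foliations}; this should follow from the smoothness and flatness of the affine structure along $B$ together with \th\ref{C: polarization foliation}.
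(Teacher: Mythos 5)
Your setup matches the paper's: you pass to the product chart of \th\ref{P: Local foliated coordinates of linearized-foliation}, use \th\ref{C: splitting of full infinitesimal foliation} to see that a regular leaf in $U_i\cap U_0$ is locally of the form $\D^\ell\times\{y_0\}\times\mathcal{L}_{z_0}$ with $\mathcal{L}_{z_0}$ a leaf of $\fol_1$, and you correctly reduce the claim to straightening $\mathcal{L}_{z_0}$ inside the $z$-coordinates. You also correctly flag the genuine difficulty (leaves of $\fol_1$ sit in round spheres about the origin of the $z$-space, hence are extrinsically curved). The gap is in your proposed resolution of that difficulty. The developing map of the flat leaf $L_q$ is an \emph{intrinsic} object: it is defined on the universal cover of $L_q$ and takes values in an abstract Euclidean space; it does not furnish a chart of the ambient $z$-space, and it cannot change how $\mathcal{L}_{z_0}$ is embedded in $\R^{m-n}$. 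Flatness of $L_q$ constrains the induced metric on $\mathcal{L}_{z_0}$, not its second fundamental form in the $z$-space, so ``unwrapping in the developed chart'' cannot turn a Hopf-type circle into an affine slice. Since the statement explicitly asks for a \emph{linear} change of the coordinates of \th\ref{L: Polarization of B-foliations}, replacing those coordinates by a developed chart of the leaf changes the statement rather than proves it. On top of this, the step you call delicate (choosing the coordinate change uniformly on the overlap and checking compatibility with the block form of the metric) is only asserted to ``follow from smoothness and flatness,'' so the conclusion is not reached even granting the developing-map idea.

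The paper's own proof is a one-line combination of the two preceding lemmas, and it is purely local: from \th\ref{L: existence of cover with compatible regular foliation} one has the nesting $L_{w,i}\subset L_w$ on $U_i\cap U_0$, and from the proof of \th\ref{L: Polarization of B-foliations} one has that, near any regular point of the overlap, the full leaf is a graph over the $x$-directions together with the directions tangent to $\mathcal{L}_{z_0}$, so that after a linear change of the $z$-coordinates adapted to $T_{z_0}\mathcal{L}_{z_0}$ the complementary $z_j$ are constant along the leaf; the spherical constraint you worried about is absorbed by the fact that the leaves lie in level sets of the distance to the stratum. No global unwrapping and no developing map enter: the intrinsic flatness is used only (there and in the lemma preceding this one) to see that the induced metric does not depend on $x$ and that the polarization leaves are themselves flat, not to straighten the $\fol_1$-leaves in the ambient coordinates. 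Reworking your argument along these local lines, and dropping the developing-map step, would bring it in line with what the paper actually proves.
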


\begin{proof}
This follows from combining the proof of Lemma \ref{L: Polarization of B-foliations} with Lemma \ref{L: existence of cover with compatible regular foliation}.
\end{proof}

\begin{proof}[Proof of \th\ref{MT: Collapsing general flat foliations}]
We follow the same strategy as in \cite[Section 4]{CheegerGromov1986}.

Consider the open cover $\{U_i\}_{i=1,\ldots,N}$ of $M$ from \th\ref{L: existence of cover with compatible regular foliation}. Now recall that for $i\neq 0$ each $U_i$ is a tubular neighborhood of a connected component $B_i$ of singular stratum. Then for each $i=1,\ldots,N$ we can cover $U_i$ by tubular neighborhoods $\{\widetilde{U}_{i,\alpha}\}_{\alpha\in \Lambda_i}$ of leaves in $B_i$, where the radius of $\widetilde{U}_{i,\alpha}$ is sufficiently small, so that \cite[Theorem A]{MendesRadeschi2015} holds. 

On each $\widetilde{U}_{i,\alpha}$ the regular foliation $\fol_i$ of $U_i$ induces a regular foliation $\fol_{i,\alpha}$ whose leaves are contained in the leaves of $\fol$. 

We extend the union of $\{\widetilde{U}_{i,\alpha}\}_{\alpha\in \Lambda}$ to an open cover of $M$, by taking also tubular neighborhoods centered at regular leaves, whose radius is small enough so that \cite[Theorem A]{MendesRadeschi2015} holds.

From this we obtain a new finite cover $\{\overline{U}_j\}_{j=1,\ldots,\bar{N}}$ of $M$. Recall that by \cite[Lemma 3.6]{CorroFernandezPerales22} there exists a partition of unity $\{\phi_j\}_{j=1,\ldots,\bar{N}}$ subordinate to $\{\overline{U}_j\}_{j=1,\ldots,\bar{N}}$. We set $\rho_{i,\delta}=\delta^{\log(\phi_i)/\log(1/2)}$.

On $\overline{U}_j$ we  denote the regular subfoliations we have constructed by $\overline{\fol}_j$, and observe that we have chosen an arbitrary ordering of the open cover.  On $\overline{U}_1$ we decompose $\g|_{\overline{U}_1}= \g_1'\oplus \hh_1$, where $\g_1'$ is the restriction of $\g|_{\overline{U}_1}$ to $T\overline{\fol}_i$. We set
\[
\g_1 = \begin{cases}\rho_{1,\delta}^2\g_1'\oplus \hh_1, & \overline{U}_1\\
\log(\delta)^2\g & M\setminus \overline{U}_1,
\end{cases}
\]
and proceeding by induction  on $\overline{U}_{j+1}$ we write $\g_j = \g_{j+1}'\oplus \hh_{j+1}$ where  $\g_{j+1} = \g_j|_{T\overline{\fol}_j}$. We now set 
\[
\g_j = \begin{cases}\rho_{j+1,\delta}^2\g_{j+1}'\oplus \hh_{j+1}, & \overline{U}_{j+1}\\
\g_j & M\setminus \overline{U}_{j+1}.
\end{cases}
\]

The metric $\g_\delta:=\g_{\overline{N}}$ induces collapse with bounded curvature, and the proof is verbatim to the one of \cite[Section 4]{CheegerGromov1986}: For this we use the identities in \cite[Section 2.1]{GromollWalschap}, and that how our metrics are changing depends on the description in \th\ref{P: Local foliated coordinates of linearized-foliation} of the foliated charts. Moreover the same proof yields that $\mathrm{vol}(\g_\delta)(M)\leq C \delta^\ell |\log(\delta)|^m$, for some constant $C>0$, and $\ell=\dim(\fol)<m=\dim(M)$.
\end{proof}

As a corollary from \cite{CheegerGromov1990,CheegerFukayaGromov1992} we the following:\\

\begin{duplicateCOR}[\ref{MC: collapse controlled by torus action}]
Consider $(M, \fol, \g)$ a  closed singular Riemannian foliation with flat leaves on a compact Riemannian manifold with finite fundamental group, such that all singular strata are isolated from one another, and assume that $\lambda\leq \Sec(\g)\leq \Lambda$.  Let $\{\g_i\}_{i\in \N}$ be the collapsing sequence given by \th\ref{MT: Collapsing general flat foliations}. Then, given $\varepsilon>0$ there exists $N:=N(\varepsilon)\in \N$ such that for all $i\geq N$ there exists an $F$-structure $\fol_{\varepsilon,i}$ on $M$ and an $\fol_{\varepsilon,i}$-invariant Riemannian metric $\g_{\varepsilon,i}$ such that
\[
e^{-\varepsilon}\g_i<\g_{\varepsilon,i}< e^{\varepsilon}\g_i,
\]
and
\[
\lambda-\varepsilon\leq \Sec(\g_{\varepsilon,i})\leq \Lambda+\varepsilon.
\]
\end{duplicateCOR}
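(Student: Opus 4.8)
The plan is to combine the collapse with bounded curvature from \th\ref{MT: Collapsing general flat foliations} with the structural results of Cheeger--Fukaya--Gromov, applied at the appropriate scale. First I would observe that \th\ref{MT: Collapsing general flat foliations} produces a collapsing sequence $\{\g_i\}_{i\in\N}$ with a uniform two-sided curvature bound, say $|\Sec(\g_i)|\leq\Lambda_0$ for some $\Lambda_0$ depending only on $(M,\fol,\g)$, together with $\Vol(\g_i)\to 0$. Since $M$ is compact with finite fundamental group and $\dim M = m$, after rescaling each $\g_i$ so that its curvature satisfies $|\Sec|\leq 1$ (a rescaling that changes volumes by a controlled factor), the rescaled metrics lie in the class $\mathfrak{M}(m)$ of \th\ref{T: Existence of N structures on}. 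The key point is that the volumes still tend to $0$, so for any prescribed volume threshold $v=v(m,\varepsilon)$ given by \th\ref{T: Existence of N structures on}, all but finitely many of the $\g_i$ fall below it.

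Next I would apply \th\ref{T: Existence of N structures on} and \th\ref{T: Rong bounds sectional curvature} to these small-volume metrics. Concretely, fix $\varepsilon>0$. Since $\Vol(\g_i)\to 0$, there is $N=N(\varepsilon)\in\N$ such that for all $i\geq N$ the metric (suitably normalized) satisfies the volume hypothesis of \th\ref{T: Existence of N structures on} with parameter $\varepsilon$. Because $M$ has finite fundamental group, the $\Ns$-structure produced coincides with a pure $\F$-structure $\fol_{\varepsilon,i}$ of positive rank, and \th\ref{T: Existence of N structures on}(a),(c) yields an $\fol_{\varepsilon,i}$-invariant metric $\g_{\varepsilon,i}$ with $e^{-\varepsilon}\g_i<\g_{\varepsilon,i}<e^{\varepsilon}\g_i$. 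Finally \th\ref{T: Rong bounds sectional curvature} upgrades the curvature control to $\min\Sec(\g_i)-\varepsilon\leq\Sec(\g_{\varepsilon,i})\leq\max\Sec(\g_i)+\varepsilon$. To land on the stated bounds $\lambda-\varepsilon\leq\Sec(\g_{\varepsilon,i})\leq\Lambda+\varepsilon$, I would verify that the collapsing construction preserves the original two-sided bound $\lambda\leq\Sec(\g)\leq\Lambda$ uniformly in $i$ on the collapsing sequence, so that $\min\Sec(\g_i)\geq\lambda$ and $\max\Sec(\g_i)\leq\Lambda$ for all large $i$.

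The main obstacle I expect is the bookkeeping of the normalization. The theorems of Cheeger--Fukaya--Gromov and Rong are stated for the fixed normalization $|\Sec|\leq 1$, whereas our sequence has a fixed bound $\Lambda_0$ that is not normalized to $1$, and more importantly I want the final curvature bounds expressed in terms of the \emph{original} constants $\lambda,\Lambda$ rather than in terms of $\min\Sec(\g_i),\max\Sec(\g_i)$. The delicate step is therefore to confirm that the metrics $\g_i$ in \th\ref{MT: Collapsing general flat foliations} satisfy $\lambda\leq\Sec(\g_i)\leq\Lambda$ on the nose (or with error tending to $0$), and that the rescaling needed to enter $\mathfrak{M}(m)$ can be absorbed into the $\varepsilon$ without spoiling the two-sided estimate. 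Here I would invoke the explicit curvature identities from \cite[Section 2.1]{GromollWalschap} used in the proof of \th\ref{MT: Collapsing general flat foliations}: since the leaves are flat, the dangerous $\delta$-dependent terms (those scaling like $(1-\delta)/\delta^2$) vanish, and one sees directly that the sectional curvatures of $\g_\delta$ stay inside a neighborhood of $[\lambda,\Lambda]$ that shrinks as the construction refines.

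Once the normalization is handled, the remaining argument is a direct citation: the collapse supplies small volume, finite $\pi_1$ converts the $\Ns$-structure into an $\F$-structure, and Rong's refinement gives the sharp curvature window. I would close by remarking that when $M$ is in addition simply connected, the pure $\F$-structure $\fol_{\varepsilon,i}$ is realized by an isometric torus action $\mu_{\varepsilon,i}\colon T^{k}\times M\to M$ with respect to $\g_{\varepsilon,i}$, by \cite[Lemma 1.4 and Lemma 3.2]{Rong1996}, which is the refinement recorded in the remark following the statement.
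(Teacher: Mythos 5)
Your overall route coincides with the paper's: the paper gives no separate argument for this corollary, presenting it as a direct application of \th\ref{T: Existence of N structures on} and \th\ref{T: Rong bounds sectional curvature} to the collapsing sequence of \th\ref{MT: Collapsing general flat foliations}, with finiteness of $\pi_1(M)$ converting the resulting $\Ns$-structure into a pure $\F$-structure. Your steps of rescaling to $|\Sec|\leq 1$, waiting until the volume falls below the threshold, and transferring back the $e^{\pm\varepsilon}$-closeness and the curvature estimates are exactly this citation, and the normalization bookkeeping is harmless since $\varepsilon$ is arbitrary.

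The genuine gap is your final step: the claim that $\lambda\leq\Sec(\g_i)\leq\Lambda$ holds, up to an error that shrinks along the sequence, because flatness of the leaves kills the dangerous terms. Flatness only removes the term $\frac{(1-\delta)}{\delta^2}\Sec(\g|_{T\fol})$ in the vertical curvatures. The other two identities of \cite[Section 2.1]{GromollWalschap} do not in general remain in $[\lambda,\Lambda]$: the mixed curvatures $\Sec(\g)(X,V)-\frac{(1-\delta)}{\|X\|^2_{\g}\|V\|^2_{\g}}\|A^{\ast}_X V\|^2_{\g}$ drop below $\lambda$ by an amount that \emph{increases} to the full $A$-tensor term as $\delta\to 0$, while the horizontal curvatures $(1-\delta)\Sec_{M/\fol}+\delta\Sec(\g)$ rise toward $\Sec_{M/\fol}=\Sec(\g)+3\|A\|^2$, which exceeds $\Lambda$ whenever the horizontal distribution is non-integrable; the deviation is governed by the O'Neill tensor and grows, rather than shrinks, as the collapse refines. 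Concretely, for the Hopf circle foliation on the round $\Sp^3$ (so $\lambda=\Lambda=1$) the collapsing metrics are the Berger metrics, whose curvatures fill out $[\delta^2,\,4-3\delta^2]$, and no metric $e^{\pm\varepsilon}$-close to a very collapsed Berger metric can have curvature in $[1-\varepsilon,1+\varepsilon]$, since Klingenberg's injectivity radius estimate would force a lower volume bound. Moreover, in the singular construction actually used in \th\ref{MT: Collapsing general flat foliations}, the metric equals $\log(\delta)^2\g$ away from the singular tubes, so its curvatures there tend to $0$, again destroying the bound whenever $\lambda>0$. What Rong's theorem genuinely yields is $\min\Sec(\g_i)-\varepsilon\leq\Sec(\g_{\varepsilon,i})\leq\max\Sec(\g_i)+\varepsilon$; so the constants in the conclusion must be read as uniform two-sided curvature bounds for the collapsing sequence (these exist by \th\ref{MT: Collapsing general flat foliations}), not as the bounds on the original $\g$, and your verification step cannot be carried out as stated. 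A smaller caveat in the same spirit: the volume threshold in \th\ref{T: Existence of N structures on} is $v(m,D,\varepsilon)$, and the collapse of \th\ref{MT: Collapsing general flat foliations} has no uniform diameter bound, so the step ``volume $\to 0$, hence eventually below the threshold'' requires the injectivity-radius formulation of the Cheeger--Fukaya--Gromov theorem rather than the volume formulation you quote.
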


\begin{remark}
Now we point out that the same proof as \cite[Theorem A]{Corro2024} of rescaling the metrics from \th\ref{MC: collapse controlled by torus action} in the directions tangent to the leaf, to conclude that the foliation is homogeneous. This is because the collapsing sequence in \th\ref{MT: Collapsing general flat foliations} does not converge in the Gromov Hausdorff sense to the leaf space $(M/\fol,d^\ast)$ due to the fact that we need to introduce the factor $\log(\delta)$ to control the sectional curvature.
\end{remark}


\section{Lie groupoid action}\label{S: Lie groupoid actions}

\begin{definition}
A \emph{Lie groupoid $\G\rightrightarrows M$} consists of two smooth manifolds $\G$ and $M$ together with two submersions $s,t\colon \G\to M$, a smooth embedding $\1\colon M\to \G$, a diffeomorphism $i\colon \G\to M$, and a smooth map $\m\colon \G^{(2)} = \{(g,h)\in \G\times \G\mid s(g) = t(h)\}\to \G$, which satisfy the following:
\begin{enumerate}[i)]
\item For all $(g,h)\in \G^{(2)}$ it holds
\[
	s(\m(g,h)) = s(h).
\]
\item For all $(g,h)\in \G^{(2)}$ it holds
\[
	t(\m(g,h)) = t(g).
\]
\item For all $(\tilde{g},g),(g,h)\in \G^{(2)}$ it holds
\[
	\m(\m(\tilde{g},g),h) = \m(\tilde{g},\m(g,h)).
\]
\item For all $p\in M$ and all $g\in \G$ it holds
\[
	s(\1_p) = p = t(\1_p), \qquad \m(\1_{t(g)},g) = g = \m(g,\1_{s(g)}).
\]
\item For all $g\in \G$ it holds
\[
	\m(g,i(g)) = \1_{t(g)},\qquad \m(i(g),g) = \1_{s(g)}.
\]
\end{enumerate}
\end{definition}

Given a Lie groupoid $\G\rightrightarrows M$, there is a partition $\fol_\G$ of $M$ induced by the groupoid: Given $p\in M$ we set $L_p = t(s^{-1}(p))$, which is an embedded submanifold of $M$. 

Given a subset $S\subset M$ we say that it is \emph{saturated} if for any $p\in S$ we have $L_p\subset S$.

\begin{definition}[Restriction groupoid]
Let $\G\rightrightarrows M$ be a Lie groupoid, and let $S\subset M$ be a smooth submanifold $S\subset M$.  Since both $s$ and $t$ are submersions, we have that $s^{-1}(S)$ and $t^{-1}(S)$ are smooth submanifolds of $\G$. When $s^{-1}(S)\cap t^{-1}(S)$ is also a submanifold,  we can define the \emph{restriction groupoid} $\G_S\rightrightarrows S$ as follows: We set $\G_S = s^{-1}(S)\cap t^{-1}(S)$, with the structure maps given by setting  restricting the structure maps of $\G\rightrightarrows M$, to $\G_S$, $\G_S^{(2)}$, and $S$. For example, when  $U\subset M$ is an open subset,  $\G_U$ is an open subset of $\G$, and thus we can define the  restriction groupoid  $\G_U\rightrightarrows U$. Observe that a subset  $S\subset M$  is a saturated subset if and only if  $s^{-1}(S)=t^{-1}(S)$. Thus we can define the restriction groupoid $\G_S\rightrightarrows S$ for saturated submanifold $S$, and the codimension of $\G_S$ in $\G$ is the same as the codimension of $S$ in $M$. Any leaf $L\subset M$ of $\G\rightrightarrows M$ is a saturated submanifold.
\end{definition}

\begin{remark}
When we consider $S$ to be an orbit $L\subset M$ of the Lie groupoid $\G\rightrightarrows M$, then by \cite[Section 3.4]{delHoyo2013}, we get the restriction groupoid $\G_L\rightrightarrows L$.
\end{remark}

Given  a Lie groupoid  $\G\rightrightarrows M$,  a smooth manifold $P$ and  a smooth map $\alpha\colon P\to M$, we define a \emph{left action of $\G\rightrightarrows M$ on $P$ along $\alpha$} is a map $\mu\colon \G\times_M P := \{(g,p)\in \G\times P\mid s(g) = \alpha(p)\}\to P$ such that: 
\begin{enumerate}[(i)]
\item For all $(g,p)\in \G\times_M P$ we have $\alpha(\mu(g,p)) = t(g)$.
\item For all $(g,\mu(h,p))$, $(h,p)\in \G\times_M P$ and $(g,h)\in \G^{(2)}$ we have $\mu(g,\mu(h,p)) = \mu(\m(g,h),p)$.
\item For all $x\in M$ with $x = \alpha(p)$, we have $\mu(\1_{x},p) = p$.
\end{enumerate}
A left action of $\G\rightrightarrows M$ on $P$ along $\alpha$ induces a Lie groupoid $\G\times_M P\rightrightarrows P$, with the following maps $\overline{s}(g,p) := p$, $\overline{t}(g,p) := \mu(g,p)$, $\overline\1_p := (\1_{\alpha(p)},p)$, $\overline{i}(g,p):=(i(g),\mu(g,p))$, and for $(g_2,g_1)\in \G^{(2)}$ we set $\overline{\m}((g_2,\mu(g_1,p)),(g_1,p)):= (\m(g_2,g_1),p)$.

We observe that any action $\mu$ of $\G\rightrightarrows M$ on $\alpha\colon P\to M$ realizes the elements  of $\G$ as symmetries of the family of fibers of the  map $\alpha$, i.e. for each $g\in \G$ we have a diffeomorphism $\mu_g\colon P \supset \alpha^{-1}(s(g))\to \alpha^{-1}(t(g))\subset P$ given by $\mu_g(p ) = \mu(g,p)$. 

In the case when $\alpha\colon P\to M$ is a vector bundle, we call a Lie groupoid action of $\G\rightrightarrows M$ on $P$ along $\alpha$ a \emph{left  representation of $\G\rightrightarrows M$} if  the action is such that for each $g\in \G$ the map $\mu_g\colon \alpha^{-1}(s(g))\to \alpha^{-1}(t(g))$ is a linear map. 

\begin{example}[Left Lie groupoid action]\th\label{E: Left Lie groupoid action}
Given a Lie groupoid $\G\rightrightarrows M$, we define the \emph{left Lie groupoid action} of $\G\rightrightarrows M$ on $t\colon \G\to M$, to be the map $\mu\colon \G\times_M \G\to M$ defined for $(g,h)\in \G^{(2)}$ as
\[
\mu(g,h) = \m(g,h).
\]
\end{example}

\begin{example}[Canonical Lie groupoid action]\th\label{E: canonical Lie groupoid action}
Given a Lie groupoid $\G\rightrightarrows M$, the \emph{canonical Lie groupoid action} of $\G\rightrightarrows M$ on $\Id_M\colon M\to M$ is given by the map $\mu\colon \G\times_M M\to M $ defined as
\[
\mu(g,s(g)) = t(g).
\]
\end{example}

\begin{remark}
Observe that in \th\ref{E: canonical Lie groupoid action} the orbits of $\G\times_M M\rightrightarrows M$ coincide with the orbits of $\G\rightrightarrows M$.
\end{remark}

A \emph{free} left Lie groupoid action $\mu$ of $\G\rightrightarrows M$ on $P$ along $\alpha\colon P\to M$ is one such that the Lie groupoid $\G\times_M P\rightrightarrows P$ has trivial isotropy groups. Moreover, it is called \emph{proper} if the map $\G\times_M P\to P\times P$ defined by $(g,p)\mapsto (\mu(g,p),p)$ is a proper map. We define the orbit space of the action, denoted by $P/\G$, as the quotient space of the partition  $\fol_{\G\times_M P}$ of $P$ induced by the orbits of the Lie groupoid $\G\times_M P\rightrightarrows P$, equipped with the quotient topology.

\subsection{Normal representation} We begin by pointing out that given a smooth group action $\mu\colon H\times M\to M$ of a Lie group $H$ on a smooth manifold $M$, we have a lift of the action to the tangent bundle of $M$, $\mu_\ast\colon H\times TM\to TM$. This action is given as follows: For $h\in H$ fixed, we can consider the diffeomorphism $\mu_h\colon M\to M$ defined as $\mu_h(p) = \mu(h,p)$. We then set $\mu_\ast(h,X) = (\mu_h)_\ast (X)$, for $h\in H$ and $X\in TM$. 

In contrast, for a Lie groupoid action $\G\rightrightarrows M$ on $\alpha\colon P\to M$ such a natural lift does not exist. But we have the so-called \emph{normal representation} of the action groupoid $\G\times_M P \rightrightarrows P$.

Given two Lie groupoids $\G\rightrightarrows M$ and $\G'\rightrightarrows M'$, we define a \emph{morphism of Lie groupoids} $\phi\colon (\G\rightrightarrows M)\to (\G'\rightrightarrows M')$ to be two smooth maps $\phi^{\ar}\colon \G\to \G'$ and $\phi^{\ob}\colon M\to M'$ such that for the structure maps $\{s,t,\1,i,\m\}$ of $\G\rightrightarrows M$ and $\{s',t',\1',i',\m'\}$ of $\G'\rightrightarrows M'$ we have $s'\circ \phi^{\ar} = \phi^{\ob}\circ s$, $t'\circ \phi^{\ar} = \phi^{\ob}\circ t$, $i'\circ \phi^{\ar} = \phi^{\ar}\circ i$, $\1'\circ\phi^{\ob} = \phi^{\ar}\circ \1$, and for $(g,h)\in \G^{(2)}$ we have $\mu'(\phi^{\ar}(g),\phi^{\ar}(h)) = \phi^{\ar}(\m(g,h))$.

An \emph{VB-groupoid} consists of two groupoids $\Gamma\rightrightarrows E$ and $\G\rightrightarrows M$ and a groupoid map $\phi\colon (\Gamma\rightrightarrows E) \to (\G\rightrightarrows M)$, such that the maps $\phi^{\ar}\colon \Gamma\to \G$ and $\phi^{\ob}\colon E\to M$ are vector bundles, and the structure maps of $\Gamma\rightrightarrows E$ are vector bundle maps. We define the \emph{core} of the VB-groupoid to be $C = \kernel(s\colon \Gamma\to E)|_M$, where we identify $M$ with the $0$-section of $\phi^{\ob}\colon E\to M$. By the following proposition, VB-groupoids are equivalent to linear representations of Lie groupoids.

\begin{proposition}[Proposition~3.5.5 in \cite{delHoyo2013}]\th\label{P: correspondence between VB-groupoids and representations}
There is a $1$--$1$ correspondence between linear representations of a Lie groupoid $G\rightrightarrows M$ and VB-groupoids $(\Gamma\rightrightarrows E) \to (\G\rightrightarrows M)$  with trivial core.
\end{proposition}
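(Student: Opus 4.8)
The plan is to exhibit two mutually inverse constructions between the two kinds of data. First I would treat the forward direction. Given a linear representation $\mu$ of $\G\rightrightarrows M$ on a vector bundle $\alpha\colon P\to M$, I set $E:=P$ with $\phi^{\ob}:=\alpha$, and I let $\Gamma:=\G\times_M P$ be the action groupoid $\Gamma\rightrightarrows E$ described above, with $\phi^{\ar}\colon\Gamma\to\G$ the projection $(g,p)\mapsto g$. The fiber of $\phi^{\ar}$ over $g$ is $\{g\}\times\alpha^{-1}(s(g))$, so $\Gamma=s^\ast P$ is a vector bundle over $\G$. I then verify that $\phi=(\phi^{\ar},\phi^{\ob})$ is a VB-groupoid: the source $\overline{s}(g,p)=p$ is fiberwise the identity of $\alpha^{-1}(s(g))$, the target $\overline{t}(g,p)=\mu(g,p)$ is fiberwise $\mu_g$, which is linear by hypothesis, and $\overline{\1},\overline{i},\overline{\m}$ are vector-bundle maps precisely because $\mu$ is linear. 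Since $\overline{s}$ is fiberwise injective, $\kernel(\overline{s})=0$ and the core is trivial.

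For the reverse direction I start from a VB-groupoid $\phi\colon(\Gamma\rightrightarrows E)\to(\G\rightrightarrows M)$ with trivial core and write $s_\Gamma,t_\Gamma$ for its source and target. For $g\in\G$ let $\Gamma_g:=(\phi^{\ar})^{-1}(g)$, a vector space on which $s_\Gamma$ restricts to a linear map $\Gamma_g\to E_{s(g)}$. The essential structural input is the short exact sequence of vector spaces
\[
0\longrightarrow C_{t(g)}\longrightarrow\Gamma_g\xrightarrow{\ s_\Gamma\ }E_{s(g)}\longrightarrow 0,
\]
where $C$ is the core; it is obtained by transporting $\kernel(s_\Gamma)$ along the unit section by right translation in $\Gamma$. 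When the core is trivial this says $s_\Gamma\colon\Gamma_g\to E_{s(g)}$ is an isomorphism for every $g$, i.e. $s_\Gamma\colon\Gamma\to s^\ast E$ is an isomorphism of vector bundles over $\G$. I then define the representation on $\alpha:=\phi^{\ob}\colon E\to M$ by $\mu(g,e):=t_\Gamma\big((s_\Gamma|_{\Gamma_g})^{-1}(e)\big)$ for $e\in E_{s(g)}$. Linearity of each $\mu_g$ is immediate; the identities $\alpha(\mu(g,e))=t(g)$ and $\mu(\1_x,e)=e$ follow from $t_\Gamma$ covering $t$ and from $\1_E$ splitting $s_\Gamma$ over units, while the associativity $\mu(g,\mu(h,e))=\mu(\m(g,h),e)$ is exactly the multiplicativity of $\Gamma\rightrightarrows E$ read through the isomorphism $\Gamma\cong s^\ast E$; smoothness is inherited from the structure maps.

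Finally I would check the two constructions are mutually inverse. Starting from a representation, in the associated VB-groupoid one has $s_\Gamma=\overline{s}$ fiberwise the identity, so $(s_\Gamma|_{\Gamma_g})^{-1}(e)=(g,e)$ and $t_\Gamma(g,e)=\mu(g,e)$, returning the original $\mu$. Conversely, starting from a VB-groupoid with trivial core, the isomorphism $s_\Gamma\colon\Gamma\xrightarrow{\ \sim\ }s^\ast E$ intertwines the maps $\overline{s},\overline{t},\overline{\m}$ of the action groupoid of the reconstructed representation with $s_\Gamma,t_\Gamma,\m$, yielding a canonical isomorphism of VB-groupoids. I expect the main obstacle to be establishing the displayed short exact sequence, namely proving that $s_\Gamma$ is fiberwise surjective and identifying $\kernel(s_\Gamma|_{\Gamma_g})$ with the core fiber $C_{t(g)}$; this is where one uses essentially that every structure map of $\Gamma\rightrightarrows E$ is a vector-bundle morphism and that $s_\Gamma,t_\Gamma$ are submersions. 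Once this linear-algebraic backbone is in place, every remaining verification reduces to routine bookkeeping with the groupoid axioms.
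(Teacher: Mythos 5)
The paper offers no proof of this proposition; it is quoted directly from the cited reference (Proposition~3.5.5 in \cite{delHoyo2013}), so the only meaningful comparison is with the standard argument there, and your proposal reproduces it essentially verbatim: the action groupoid $\G\times_M P\rightrightarrows P$ with the projection to $\G$ is a VB-groupoid with trivial core, and conversely the core exact sequence $0\to C_{t(g)}\to \Gamma_g\to E_{s(g)}\to 0$ shows that triviality of the core makes $s_\Gamma\colon \Gamma\to s^{\ast}E$ a vector bundle isomorphism, through which $t_\Gamma$ defines the linear representation. Your argument is correct, including the identification of the genuinely nontrivial step: fiberwise surjectivity of $s_\Gamma$ (which follows from $s_\Gamma$ being a submersion along the zero section together with linearity on fibers) and the identification of $\kernel(s_\Gamma|_{\Gamma_g})$ with $C_{t(g)}$ via right translation by the zero element over $g$.
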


We now consider $\G\rightrightarrows M$ a Lie groupoid, and fix an orbit $L\subset M$. For the restriction groupoid $\G_{L}\rightrightarrows L$ the embedding $L\subset M$ induces an embedding $\G_L\subset \G$. With this embeddings we define the normal bundles $N(\G_L)\to \G_L$ and $N(L)\to L$ by setting $N(\G_L) = T\G|_{\G_L}/T\G_L$ and $N(L) = TM|_L/TL$. With this we obtain the following sequence of VB-groupoids
\[
(T\G_L)\rightrightarrows TL)\to (T\G|_{\G_L}\rightrightarrows TM|_{L})\to (N(\G_L)\rightrightarrows N(L)).
\]
Observe that as with the tangent groupoid of a Lie groupoid, we have a VB-groupoid 
\[
\pi\colon (N(\G_L)\rightrightarrows N(L))\to (\G_L\rightrightarrows L).
\] 
Moreover, since $\G_L\subset \G$ and $L\subset M$ have the same codimension, this implies that the bundles $N(\G_L)\to \G_L	$ and $N(L)\to L$ have the same rank.  From this it follows that the core of the VB-groupoid 	$\pi\colon (N(G_L)\rightrightarrows N(L))\to (\G_L\rightrightarrows L)$	has trivial core. 
From \th\ref{P: correspondence between VB-groupoids and representations} we have  a linear representation	$N(\mu)$ of $\G_L\rightrightarrows L$ on \linebreak$\pi_{N(L)}\colon N(L)\to L$, called the \emph{normal representation of $\G_L\rightrightarrows L$}, or the \emph{normal representation of $\G\rightrightarrows M$ around $L$}.	

We can also express the normal representation as follows (see \cite[p.~178]{delHoyo2013}). Consider $q\in L_p$, and $g\in s^{-1}(q)$. For $v\in N_q(L_p)$ we consider a curve $\gamma\colon I \to M$ with $\gamma(0) = q$, whose velocity $\gamma'(0)$ represents the class  $v$, i.e. $[\gamma'(0)] = v$ in $T_q M|_{L_p}/T_q L_p$. Let $\widetilde{\gamma}\colon I \to \G$ be a curve such that $\widetilde{\gamma}(0) = g$ and $s\circ \widetilde{\gamma} = \gamma$. Such a curve always exists, since $s\colon \G\to M$ is a submersion. Then we set $N(\mu)(g,v)$ to be $(t\circ \widetilde{\gamma})'(0)$, that is:
\[
N(\mu)(g,v) = N(\mu)\big(g,[D_g s(\gamma'(0))] \big) = [D_g t(\gamma'(0))].
\]

\subsection{Riemannian Lie groupoids and invariant actions}\label{S: Riemannian Lie groupoids} In this section we present the notions of Riemannian metrics associated to Lie groupoids presented in \cite{delHoyoFernandes2018}.

Given a Lie groupoid action $\mu$ of $\G\rightrightarrows M$ on $\alpha\colon P\to M$, we can consider $L\subset P$ an orbit of the action groupoid $\G\times_M P\rightrightarrows P$. Given a Riemannian metric $\eta^P$ on $P$, we can identify the total spaces of the  normal bundle $N(L)\to L$ with $\nu(L):= \{v\in TP|_{L}\mid\forall x\in TL,\: \eta^{P}(v,x) =0\}\subset TP$. 

\begin{definition}\th\label{D: mu-transversly invariant}
The Riemannian metric $\eta^P$ is called \emph{transversely $\mu$-invariant} if for the  normal representation $N(\mu)$ of the action groupoid $(\G\times_M P)_L \rightrightarrows L$ on $\nu(L)\to L$  is by isometries, i.e. for $(g,p)\in (G\times_M P)_L$ the map $N(\mu)_{(g,p)}\colon \nu_{p}(L) \to \nu_{\mu(g,p)}(L)$ is an isometry.
\end{definition}

A \emph{$0$-metric} on a Lie groupoid $\G\rightrightarrows M$ is a Riemannian metric $\eta^{(0)}$ on $M$, such that it is transversely $\mu$-invariant for $\mu$ the canonical action of the Lie groupoid (see \th\ref{E: canonical Lie groupoid action}).

Since $0$-metrics are only on the object space of a Lie groupoid, we can also consider Riemannian metrics on the space of arrows. A \emph{$1$-metric} on a Lie groupoid $\G\rightrightarrows M$ is a Riemannian metric $\eta^{(1)}$ on $\G$ which is transversely $\mu$-invariant for the left Lie groupoid action of $\G\rightrightarrows M$ on $t\colon \G\to M$ (see \th\ref{E: Left Lie groupoid action}), and such that the inversion map $i\colon \G\to \G$ is an isometry. 

\begin{remark}
An equivalent definition of a $1$-metric, is that $\eta^{(1)}$ is a Riemannian metric on $\G$ such that the map $s\colon \G\to M$ is a Riemannian submersion (see \cite[Example 2.9]{delHoyo2013}).
\end{remark}

\begin{remark}
Observe that a Lie groupoid $\G= G\rightrightarrows \{\ast\}$ is a Lie group, then any Riemannian metric on $G$ induces a $1$-metric on $\G$. Thus, we observe that $1$-metrics do not account for the Lie groupoid multiplication. To consider the multiplication, it is necessary to consider the space of composable arrows $\G^{(2)}$ of the Lie groupoid $\G\rightrightarrows M$.
\end{remark}

As done in \cite[Section~3.3]{delHoyoFernandes2018}, we can identify a pair of composable arrows $(g,h)$, and their multiplication $\m(g,h)$ with a diagram as follows:
\begin{center}
\begin{tikzcd}[column sep = small]
	&y\arrow[bend right,swap]{dl}{g}&\\
	z & &x\arrow[bend left]{ll}{\m(g,h)}\arrow[bend right,swap]{ul}{h},
\end{tikzcd}
\end{center}
where $x = s(h) = s(\m(g,h))$, $y= t(h)=s(g)$ and $z= t(g) = t(\m(g,h))$. With this identification, we can define an action of the symmetric group $S_3$ on $\G^{(2)}$, by identifying the source $s(h)$ with $1$, the target $t(h)$ with $2$, and the target $t(g)$ with $(3)$. So for example, the element $(1,3)\cdot (g,h)$ corresponds to the diagram
\[
\begin{tikzcd}[column sep = small]
	&y\arrow[bend right,swap]{dl}{g}&\\
	z & &x\arrow[bend left]{ll}{\m(g,h)}\arrow[bend right,swap]{ul}{h},
\end{tikzcd}\mapsto \begin{tikzcd}[column sep = small]
	&y\arrow[bend left]{dr}{g}&\\
	x\arrow[bend left]{ur}{h} \arrow[bend right,swap]{rr}{\m(g,h)} & &z
\end{tikzcd}.
\]
That is $(1,3)\cdot (g,h) = (i(h),i(g))$ (see \cite[Remark 3.13]{delHoyoFernandes2018} for an alternative definition). 

We have also three commuting groupoid actions $\mu_1$, $\mu_2$ and $\mu_3$ of $(\G\rightrightarrows M)$ on $\alpha_i\colon \G^{(2)}\to M$, $i\in \{1,2,3\}$, defined as follows:
\begin{enumerate}[(i)]
\item
\[
\mu_1\big(k,(g,h)\big) = \big(\m(k,g),h\big)\quad \mbox{and}\quad \alpha_1(g,h) = t(g),
\]
\item
\[
\mu_2\big(k,(g,h)\big) = \big(\m(g,i(k)),\m(k,h)\big)\quad \mbox{and}\quad \alpha_2(g,h) = t(h),
\]
\item
\[
\mu_3\big(k,(g,h)\big) = \big(\m(k,g),h\big)\quad \mbox{and}\quad \alpha_2(g,h) = s(h),
\]
\end{enumerate}

\begin{remark}
Observe that the action of $S_3$ on $\G^{(2)}$ interchanges the actions $\mu_i$ of $\G\rightrightarrows M$ on $\alpha_i\colon\G^{(2)}\to M$. Moreover, the orbits of the actions $\mu_1$, $\mu_2$, and $\mu_3$, i.e. the orbits of the action groupoids $\G\times_M \G^{(2)}\rightrightarrows \G^{(2)}$, are the fibers of the maps $\proj_2,\m,\proj_1\colon \G\times \G\supset \G^{(2)}\to \G$ respectively.
\end{remark}

A Riemannian metric $\eta^{(2)}$ on $\G^{(2)}$ is called a \emph{$2$-metric} on $\G\rightrightarrows M$ if it is transversely invariant for the $\mu_1$-action of  $\G\rightrightarrows M$ on $\alpha_1\colon \G^{(2)}\to M$ and the action of $S_3$ is by isometries. The pair $(\G\rightrightarrows M, \eta^{(2)})$ is called a \emph{Riemannian groupoid}. In the literature, the term ``Riemannian groupoid'' also refers to a $1$-metric on the Lie groupoid $\G\rightrightarrows M$, cf.\ \cite{GallegoGualdraniHectorReventos1989,Glickenstein2008}.

\begin{remark}
From the fact that $S_3$ permutes the actions $\mu_1$, $\mu_2$ and $\mu_3$, if follows that the definition of a $2$-metric $\eta^{(2)}$ on $\G^{(2)}$ is equivalent to the following definitions:
\begin{itemize}[itemsep = 0.5em]
\item $\eta^{(2)}$ is transversely $\mu_i$-invariant for any of the actions $\mu_i$ of $\G\rightrightarrows M$ on $\alpha_i\colon \G^{(2)}\to M$, and the group $S_3$ acts by isometries.
\item $\eta^{(2)}$ makes any of the maps $\proj_2,\m,\proj_1\colon \G^{(2)}\to \G$ a Riemannian submersion, and the group $S_3$ acts by isometries.
\end{itemize}
\end{remark}

As before, a $2$-metric on $\G^{(2)}$ induces a $1$-metric on $\G$, and also a $0$-metric on $M$.

\begin{proposition}[Proposition 3.16 in \cite{delHoyoFernandes2018}]\th\label{P: 2-metrics induce 1-metrics}
Let $\G \rightrightarrows M$ be a Lie groupoid. A $2$-metric $\eta^{(2)}$ on $\G^{(2)}$ induces a $1$-metric $\eta^{(1)}$ on $\G$.
\end{proposition}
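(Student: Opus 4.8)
The plan is to obtain $\eta^{(1)}$ as the pushforward of the $2$-metric $\eta^{(2)}$ along one of the face maps, most conveniently the projection $\proj_1\colon\G^{(2)}\to\G$, $(g,h)\mapsto g$, and then to deduce the defining properties of a $1$-metric from the $S_3$-symmetry. The map $\proj_1$ is a surjective submersion whose fibers are precisely the orbits of the action $\mu_3$, so that $\G$ is the leaf space of this fibration. Since by hypothesis $S_3$ acts on $(\G^{(2)},\eta^{(2)})$ by isometries and permutes the three commuting actions $\mu_1,\mu_2,\mu_3$, the transverse $\mu_1$-invariance contained in the definition of a $2$-metric upgrades to transverse $\mu_3$-invariance; that is, the normal representation of the action groupoid attached to $\mu_3$ acts by isometries on the normal bundle to the fibers of $\proj_1$.

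With this in hand I would set, for $g\in\G$ and $X,Y\in T_g\G$,
\[
\eta^{(1)}_g(X,Y):=\eta^{(2)}_{(g,h)}(\widetilde X,\widetilde Y),
\]
where $(g,h)$ is any point of $\proj_1^{-1}(g)$ and $\widetilde X,\widetilde Y$ denote the $\eta^{(2)}$-horizontal lifts of $X,Y$, i.e.\ the preimages under $D_{(g,h)}\proj_1$ lying in the $\eta^{(2)}$-orthogonal complement of the fiber. Proving that this is well defined is the main obstacle. Because $\proj_1$ is constant on $\mu_3$-orbits, the horizontal spaces at different points of a single fiber are all carried isomorphically onto the same $T_g\G$ by $D\proj_1$, and the normal representation furnishes the canonical identification between them; transverse $\mu_3$-invariance says exactly that this identification is an isometry covering the identity of $T_g\G$, so the right-hand side is independent of the chosen point $(g,h)$. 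Smoothness follows by evaluating the formula on smooth local sections of $\proj_1$, and by construction $\proj_1\colon(\G^{(2)},\eta^{(2)})\to(\G,\eta^{(1)})$ is then a Riemannian submersion.

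It remains to check that $\eta^{(1)}$ is a $1$-metric, for which I would use the equivalent characterization that $s\colon(\G,\eta^{(1)})\to M$ is a Riemannian submersion. Applying the identical descent argument to the submersion $s\circ\proj_1=t\circ\proj_2\colon\G^{(2)}\to M$, whose fibers are the orbits of the combined left action generated by the commuting actions $\mu_1$ and $\mu_3$, produces the induced $0$-metric $\eta^{(0)}$ on $M$ and makes $s\circ\proj_1$ a Riemannian submersion. Since both $\proj_1$ and $s\circ\proj_1$ are Riemannian submersions and the latter factors through the former via $s$, the elementary fact that $s$ must then itself be a Riemannian submersion finishes the verification, so $\eta^{(1)}$ is a $1$-metric. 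As a consistency check on the direct definition, the transposition in $S_3$ interchanging the two outer objects is an $\eta^{(2)}$-isometry $\phi$ with $\proj_1\circ\phi=i\circ\proj_1$, which upon descent shows directly that the inversion $i\colon\G\to\G$ is an $\eta^{(1)}$-isometry.
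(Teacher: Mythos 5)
Your proof is correct, and the first thing to note is that the paper itself contains no proof of this statement: it is quoted as Proposition 3.16 of \cite{delHoyoFernandes2018}, so the relevant comparison is with the proof in that reference, which your argument essentially reproduces. The route is the standard one: push $\eta^{(2)}$ forward along one face map (well defined because transverse invariance makes the normal representation an isometric identification of normal spaces covering the identity on the base, exactly as you say), use the $S_3$-isometries to transfer transverse invariance among $\mu_1,\mu_2,\mu_3$ and to identify the pushforwards along $\proj_1$, $\m$, $\proj_2$, and then verify the defining conditions of a $1$-metric.

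Two points need repair, neither fatal. First, the transposition exchanging the two outer objects sends $(g,h)$ to $(i(h),i(g))$ (this is computed in the paper), so it satisfies $\proj_1\circ\phi=i\circ\proj_2$, not $i\circ\proj_1$ as you claim; to realize inversion through $\proj_1$ alone use instead the transposition $(g,h)\mapsto\big(i(g),\m(g,h)\big)$, or combine your identity with the already-established equality $(\proj_1)_\ast\eta^{(2)}=(\proj_2)_\ast\eta^{(2)}$. Either way inversion is an isometry—but be aware this is not merely a ``consistency check'': the characterization of $1$-metrics by ``$s$ is a Riemannian submersion'' that you invoke from the paper silently omits the inversion condition, which is part of the definition, so this verification is a mandatory half of the proof. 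Second, your descent of $\eta^{(2)}$ along $s\circ\proj_1$ appeals to transverse invariance for the action ``generated by'' $\mu_1$ and $\mu_3$, which is not literally among the hypotheses. It does follow from invariance for each action separately: each fiber of $s\circ\proj_1$ is saturated by both actions, the normal representation of a $\mu_i$-arrow maps the subspace tangent to that coarser fiber into the corresponding subspace at the other endpoint, and a linear isometry carrying a subspace onto a subspace induces an isometry between the quotients, again covering the identity on $T_xM$; finitely many such arrows join any two points of a fiber. With this spelled out, your two-out-of-three lemma (if $\proj_1$ and $s\circ\proj_1$ are Riemannian submersions, so is $s$) finishes the argument correctly. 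A slightly more direct alternative, closer to \cite{delHoyoFernandes2018}, avoids the combined action altogether: the action groupoid of the left translation action is $\G^{(2)}\rightrightarrows\G$ with source $\proj_2$ and target $\m$, both Riemannian submersions onto $(\G,\eta^{(1)})$, so its normal representation is computed by horizontal lifts and is automatically isometric, which is precisely the transverse invariance required of a $1$-metric.
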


\begin{remark}\label{R: 2-metrics on Lie groups}
Given a Lie group $\G = G\rightrightarrows \{\ast\}$, a bi-invariant metric $\eta$ on $\G$ is a $1$-metric on $G$, coming from the $2$-metric $\eta\times \eta$ on $\G^{(2)} = G\times G$. Nonetheless there are $1$-metrics on $G\rightrightarrows \{\ast\}$ coming from $2$-metrics which are not bi-invariant metrics (see \cite[Example 4.4]{delHoyoFernandes2018}).
\end{remark}

\subsection{Hausdorff proper Lie groupoids and existence of 2-metr\-ics} As with  $1$- and $0$-metrics, there are sufficient conditions to guarantee the existence of a $2$-metric given a $1$-metric. To state this condition we consider proper Lie groupoids. A Lie groupoid $\G\rightrightarrows M$ is \emph{proper}, if the map $\rho\colon \G\to M\times M$ given by $\rho(g) = (t(g),s(g))$ is a proper map. 

\begin{thm}[Theorem 1 in \cite{delHoyoFernandes2018}]\th\label{T: existence of 2-metrics}
Any proper Lie groupoid $\G\rightrightarrows M$ admits a $2$-metric. Moreover, given a $1$-metric $\eta^{(1)}$ on $\G\rightrightarrows M$, there exists a $2$-metric $\eta^{(2)}$ on $\G\rightrightarrows M$ such that $\eta^{(1)}$ agrees with the induced Riemannian metric given by Proposition \ref{P: 2-metrics induce 1-metrics}.
\end{thm}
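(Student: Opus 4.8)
The plan is to build the $2$-metric by averaging, using the two features that properness supplies: a right-invariant Haar system $\{\nu^x\}_{x\in M}$ on the source fibres $s^{-1}(x)$ and a cutoff function $c\colon M\to\R_{\geq 0}$, proper on each orbit and normalized so that $\int_{s^{-1}(x)}c(t(g))\,d\nu^x(g)=1$ for every $x\in M$. These are the engine for all the averaging below. I first note that it suffices to prove the ``moreover'' statement: a proper Lie groupoid always carries \emph{some} $1$-metric by an easier and standard averaging (equip $\kernel(Ds)$ with any fibre metric, add $s^{\ast}$ of an averaged $0$-metric on $M$ so that $s$ becomes a Riemannian submersion, and symmetrize by the isometry $i$), and feeding such a metric into the ``moreover'' construction produces a $2$-metric. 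So the whole content lies in promoting a given $1$-metric $\eta^{(1)}$ to a $2$-metric inducing it.

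The heart of the argument is an averaging lemma for transverse invariance: if $\G\rightrightarrows M$ acts properly on $\alpha\colon P\to M$, then any Riemannian metric on $P$ can be modified into a transversely $\mu$-invariant one in the sense of \th\ref{D: mu-transversly invariant}. The key point is that transverse invariance constrains only the normal bundles $\nu(L)\to L$ of the orbits, through the normal representation $N(\mu)$, so one is free to leave the orbit-tangent directions untouched. Choosing an auxiliary connection to split $TP=\kernel(D\alpha)\oplus\mathcal{H}$ and working in a tubular neighbourhood of each orbit, one replaces the fibre inner products on $\nu(L)$ by the $c$-weighted averages over the source fibre of the pullbacks under $N(\mu)_g$; the normalization of $c$ makes the resulting inner products invariant under $N(\mu)$, hence transversely invariant. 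Promoting this pointwise recipe on the normal bundle to a genuine smooth metric on the total space that restricts correctly is exactly the gauge trick of \cite{delHoyoFernandes2018}, and this is the step I expect to be the main obstacle, since it is where smoothness and the compatibility of the local averages across orbits must be verified.

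With the lemma in hand I construct the $2$-metric. The groupoid $\G\rightrightarrows M$ acts properly on $\alpha_1\colon\G^{(2)}\to M$ through $\mu_1$, because properness of $\G$ passes to the action groupoid $\G\times_M\G^{(2)}\rightrightarrows\G^{(2)}$. I start from a metric on $\G^{(2)}$ whose horizontal parts for the three projections $\proj_1,\m,\proj_2$ push forward to $\eta^{(1)}$ on $\G$, apply the averaging lemma to $\mu_1$ to obtain a $\mu_1$-transversely invariant metric $\eta_0$, and then symmetrize over the finite group by setting $\eta^{(2)}=\tfrac{1}{6}\sum_{\sigma\in S_3}\sigma^{\ast}\eta_0$. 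Since the $S_3$-action permutes the three commuting actions $\mu_1,\mu_2,\mu_3$, this finite average upgrades transverse $\mu_1$-invariance to transverse invariance for all three actions and makes $S_3$ act by isometries; by the equivalent characterizations of a $2$-metric noted above, $\eta^{(2)}$ is a $2$-metric, so in particular $\proj_1,\m,\proj_2$ all become Riemannian submersions.

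It remains to see that the induced $1$-metric is still $\eta^{(1)}$. Because $\eta^{(2)}$ makes each of the three projections a Riemannian submersion onto the \emph{same} metric on $\G$, the assertion ``the induced metric of \th\ref{P: 2-metrics induce 1-metrics} equals $\eta^{(1)}$'' is exactly the assertion that these push-forwards equal $\eta^{(1)}$. Here I use that $\eta^{(1)}$, being a $1$-metric, is already transversely invariant for the left action on $t$ and $i$-invariant, so it is a fixed point of both the transverse averaging on the prescribed horizontal subbundles and of the $S_3$-symmetrization restricted to the relevant section. Checking this last compatibility — that neither the gauge-trick averaging nor the $S_3$-averaging disturbs the prescribed base metric — is the delicate bookkeeping that completes the proof, and together with \th\ref{P: 2-metrics induce 1-metrics} it yields precisely the asserted $2$-metric refining $\eta^{(1)}$.
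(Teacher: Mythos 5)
This theorem is quoted in the paper from \cite{delHoyoFernandes2018} without proof, so your proposal can only be measured against the original argument there; your inventory of ingredients (Haar systems, cutoffs, the gauge trick, $S_3$-symmetrization) is the right one, but the way you combine them contains a step that is false, and it is exactly the step where the difficulty of the theorem lives. The problem is the claim that, given a transversely $\mu_1$-invariant metric $\eta_0$ on $\G^{(2)}$, the finite average $\eta^{(2)}=\tfrac16\sum_{\sigma\in S_3}\sigma^{\ast}\eta_0$ is a $2$-metric. Transverse $\mu_1$-invariance is equivalent to $\proj_2\colon\G^{(2)}\to\G$ being a Riemannian submersion (the $\mu_1$-orbits are the $\proj_2$-fibres and the normal representation becomes trivial under $D\proj_2$), and the push-forward inner product of a submersion is computed by orthogonal projection, so it is \emph{not additive} in the metric: a sum of metrics, each making a different one of $\proj_1,\m,\proj_2$ a Riemannian submersion, in general makes none of them one. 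Concretely, take $\G=\Sp^1\rightrightarrows\{\ast\}$, so $\G^{(2)}=T^2$ with coordinates $(x,y)=(g,h)$, and $\eta_0=(2+\cos y)\,dx^2+dy^2$; this is transversely $\mu_1$-invariant and induces the $1$-metric $\eta^{(1)}=dy^2$ via $\proj_2$. Since $(1,3)\cdot(x,y)=(-y,-x)$, one gets $(1,3)^{\ast}\eta_0=dx^2+(2+\cos x)\,dy^2$, hence $\eta_0+(1,3)^{\ast}\eta_0=(3+\cos y)\,dx^2+(3+\cos x)\,dy^2$: the $\proj_2$-horizontal field $\partial_y$ now has norm depending on $x$, i.e.\ varying along the $\proj_2$-fibres, so the push-forward is undefined, and the same failure persists for the full six-term average. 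Thus your $\eta^{(2)}$ is $S_3$-invariant but not transversely invariant for any of the three actions. This is not a fixable detail: achieving $S_3$-invariance and transverse invariance \emph{simultaneously} is the real content of the theorem, and in \cite{delHoyoFernandes2018} the gauge trick and the equivariant choice of averaging data are set up precisely so that both hold at once, together with the preservation clause (quoted in this paper as Theorem \ref{T: existence of G->M invariant metrics on P->M}) that keeps the induced $1$-metric equal to $\eta^{(1)}$; sequential averaging fails in either order.

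The same non-additivity undermines two further steps. Your ``easier and standard'' construction of a $1$-metric fails at the symmetrization: if $\eta$ makes $s$ a Riemannian submersion then $i^{\ast}\eta$ makes $t$ one, but $\tfrac12(\eta+i^{\ast}\eta)$ need not make $s$ a Riemannian submersion --- on the pair groupoid $\Sp^1\times\Sp^1\rightrightarrows\Sp^1$ with $\eta=(2+\cos y)\,dx^2+dy^2$ the symmetrized metric is $\tfrac12\bigl((3+\cos y)\,dx^2+(3+\cos x)\,dy^2\bigr)$, for which $s$ is no longer a Riemannian submersion. So existence of $1$-metrics on proper groupoids is itself non-elementary, which also voids your reduction of the first assertion to the ``moreover'' part. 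Likewise, your starting metric on $\G^{(2)}$, ``whose horizontal parts for the three projections push forward to $\eta^{(1)}$,'' is never constructed: a metric making all three projections Riemannian submersions onto $\eta^{(1)}$ and invariant under $S_3$ already \emph{is} a $2$-metric, so assuming it is circular, while if you only require this of $\proj_2$ (easy: any fibre metric plus $\proj_2^{\ast}\eta^{(1)}$), the burden falls back on the $S_3$-averaging step, which is the one that fails. Finally, the verification that the end product induces $\eta^{(1)}$ is deferred as ``delicate bookkeeping,'' but it is exactly where the preservation clause of Theorem \ref{T: existence of G->M invariant metrics on P->M} must be invoked, and that clause is only usable if the averaging has been organized equivariantly from the start.
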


Moreover, given a proper Lie groupoid action $\mu$ of $\G\rightrightarrows M$ on $\alpha\colon P\to M$, and a Riemannian  metric $\eta$ on $P$, we can always give a new Riemannian metric $\tilde{\eta}$ on $P$ which is $\mu$-transversely invariant.

\begin{thm}[Proposition~4.11 in \cite{delHoyoFernandes2018}]\th\label{T: existence of G->M invariant metrics on P->M}
Given any Riemannian  metric $\eta$ on $P$, and a Lie groupoid action $\mu$ of a proper $\G\rightrightarrows M$ Lie groupoid on $\alpha\colon P\to M$, there exists Riemannian metric $\tilde{\eta}$ on $P$ which is $\mu$-transversely invariant. Moreover, in the case when $\eta$ is already $\mu$-invariant, then $\eta$ and $\tilde{\eta}$ agree on the directions normal to the orbits of the action groupoid $\G\times_M P\rightrightarrows P$.
\end{thm}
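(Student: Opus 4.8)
The plan is to construct $\tilde\eta$ by averaging $\eta$ over the proper groupoid, averaging only in the directions on which the groupoid genuinely acts. Since an arrow does not move all of $TP$ but only induces the linear normal representation between normal spaces of orbits, averaging can be carried out transversally, and this is exactly why the output is transverse invariance rather than honest invariance.

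First I would pass to the action groupoid $\G\times_M P\rightrightarrows P$, with $\overline{s}(g,p)=p$ and $\overline{t}(g,p)=\mu(g,p)$, and verify that it is proper. This follows from properness of $\G\rightrightarrows M$: for compact $K,K'\subset P$, any $(g,p)$ with $(\mu(g,p),p)\in K\times K'$ has $p\in K'$ and, since $s(g)=\alpha(p)$ and $t(g)=\alpha(\mu(g,p))$, also $\rho(g)=(t(g),s(g))\in\alpha(K)\times\alpha(K')$; hence $g$ lies in the compact set $\rho^{-1}(\alpha(K)\times\alpha(K'))$, so the preimage of $K\times K'$ is a closed subset of a compact set. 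Being proper, $\G\times_M P\rightrightarrows P$ admits a Haar system $\{\lambda^q\}_{q\in P}$ along the $\overline{t}$-fibres together with a cutoff function $c\colon P\to\R$ with $c\geq0$ and $\int_{\overline{t}^{-1}(q)}c(\overline{s}(a))\,d\lambda^q(a)=1$ for all $q$.

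Next I would average the normal representation. For an orbit $L$ the metric $\eta$ induces a fibre metric on $\nu(L)\cong N(L)$, on which $N(\mu)$ acts linearly; for $q\in L$ and $v,w\in\nu_q(L)$ I set
\[
\tilde\eta_q(v,w)=\int_{\overline{t}^{-1}(q)}\eta_{\overline{s}(a)}\big(N(\mu)_a^{-1}v,\,N(\mu)_a^{-1}w\big)\,c(\overline{s}(a))\,d\lambda^q(a).
\]
The substitution $b\mapsto\overline{\m}(h,b)$ for an arrow $h\colon q\to q'$, together with the normalization of $c$ and the invariance of the Haar system, yields $\tilde\eta_{q'}(N(\mu)_h v,N(\mu)_h w)=\tilde\eta_q(v,w)$, so that $N(\mu)$ acts by isometries for $\tilde\eta$; that is, $\tilde\eta$ is $\mu$-transversely invariant along the normal directions. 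One then recombines this averaged normal metric with $\eta$ on the orbit directions through a chosen orthogonal splitting to obtain a metric on all of $P$. Equivalently, and more cleanly, the entire construction can be packaged through \th\ref{T: existence of 2-metrics}: applied to the proper action groupoid it produces a $2$-metric whose induced $0$-metric on $P$ is by definition transversely invariant for the canonical action of $\G\times_M P\rightrightarrows P$, whose orbits coincide with the orbits of $\mu$, so this $0$-metric is the desired $\tilde\eta$.

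The main obstacle is globalization and smoothness across the singular locus: the normal spaces $\nu_q(L_q)$ do not form a vector bundle over all of $P$ because the orbit dimension jumps between strata, so the pointwise formula above is only manifestly smooth within a single stratum. This is precisely where properness is indispensable—via the groupoid linearization (slice) theorem one identifies a saturated neighbourhood of each orbit with a neighbourhood of the zero section of $N(L)$ on which the representation is linear, transports the invariant normal metric, and patches by a groupoid-invariant partition of unity assembled from cutoff functions; alternatively the smoothness comes for free once one routes the argument through \th\ref{T: existence of 2-metrics}. Finally, for the ``moreover'' clause: if $\eta$ is already $\mu$-invariant then $N(\mu)$ already acts by isometries on $(\nu(L),\eta)$, the integrand in the displayed formula is then independent of $a$, and the normalization $\int c\,d\lambda^q=1$ returns $\tilde\eta_q=\eta_q$ on $\nu_q(L_q)$; hence $\eta$ and $\tilde\eta$ agree on the directions normal to the orbits, whereas the recombination step may alter the orbit directions, which is why agreement is asserted only transversally.
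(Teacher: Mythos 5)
The survey itself contains no proof of this statement: it is imported, with attribution, as Proposition~4.11 of \cite{delHoyoFernandes2018}, in a background section. So there is no in-paper argument to compare yours against, and your proposal has to be judged on its own terms and against the cited source.

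Your preliminary steps are sound: the properness of the action groupoid $\G\times_M P\rightrightarrows P$, the existence of a Haar system with cutoff, the orbit-wise averaging formula together with the substitution/functoriality computation, and the observation that the normalization $\int c\,d\lambda^q=1$ forces the averaging to fix the transverse part of an already invariant metric. The genuine gap sits exactly at the step you yourself flag as ``the main obstacle'', and neither of your two fixes closes it. The partition-of-unity patching fails because transverse invariance is \emph{not a convex condition} on metrics: the inner product a metric $g$ induces on $N_p(L)=T_pP/T_pL$ is $Q_g([v])=\min_{x\in T_pL}g(v+x,v+x)$, which involves the $g$-dependent orthogonal projection and is therefore not affine in $g$. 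Concretely, let $\R$ act on $P=\R^2$ by translations in the first coordinate (proper, free, orbits the horizontal lines, trivial normal representation), and consider the two metrics
\[
\eta_1=\begin{pmatrix}1 & f(x)\\ f(x) & 1+f(x)^2\end{pmatrix},
\qquad
\eta_2=\begin{pmatrix}1 & 0\\ 0 & 1\end{pmatrix}.
\]
Both are transversely invariant, since each induces the constant inner product $1$ on the normal directions (for $\eta_1$ one computes $Q_{\eta_1}=\det\eta_1/(\eta_1)_{11}=1$), yet their average with the invariant weights $\tfrac12,\tfrac12$ induces $1+f(x)^2/4$, which is not constant along orbits when $f$ is not constant. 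So gluing local transversely invariant metrics by a groupoid-invariant partition of unity does not produce a transversely invariant metric, and your patching step collapses.

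Your alternative route through Theorem~\ref{T: existence of 2-metrics} is correct as far as it goes — the action groupoid is proper by your argument, and a $0$-metric on $\G\times_M P\rightrightarrows P$ is precisely a $\mu$-transversely invariant metric on $P$ — but that construction never sees $\eta$, so it proves only the existence clause; the ``moreover'' clause concerns a $\tilde\eta$ built from $\eta$, and on this route it could only be rescued by the degenerate device of setting $\tilde\eta:=\eta$ whenever $\eta$ happens to be invariant, which bypasses rather than proves the averaging content of the proposition. The non-convexity above is precisely why the argument in \cite{delHoyoFernandes2018} is not a pointwise average over the singular orbit partition of $P$: roughly, their averaging is only ever performed in situations where the relevant condition is linear and the partition is regular (projectability along fibres of the structure submersions, e.g.\ for the free multiplication action of the groupoid on itself), and transverse invariance on the base is then inherited. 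The natural repair of your own argument is to work on the dual side: the conormal space $(T_pL_p)^{\circ}\subset T^\ast_pP$ does not depend on the metric, the condition ``the restriction of the cometric to the conormal bundle is invariant'' is linear in the cometric, and hence convex combinations (with invariant weights) of local solutions remain solutions; patching at the cometric level and inverting then yields the desired $\tilde\eta$.
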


\begin{remark}
We consider a Lie group $G$ as the Lie groupoid $G\rightrightarrows \{\ast\}$,  $(P,\eta)$ any Riemannian manifold, and a proper Lie group action $\mu\colon G\times P\to P$.  Then we have a proper Lie groupoid action of $G\rightrightarrows \{\ast\}$ along $P\to\{\ast\}$ induced by $\mu$. The metric $\tilde{\eta}$ obtained from \th\ref{T: existence of G->M invariant metrics on P->M} is a $G$-invariant Riemannian metric on $P$. 
\end{remark}


\section{Cheeger-like deformation for Lie groupoid actions}\label{S: Cheeger like deformation for Lie groupoids actions}

In this section we give the construction on how to deform an action Lie groupoid  onto the orbit space in an analogous fashion to the Cheeger deformation procedure for Lie group actions.

Consider an action $\mu$  of the proper Lie groupoid $\G\rightrightarrows M$ on $P$ along $\alpha\colon P\to M$, and the action groupoid $\G\times_M P\rightrightarrows P$. Consider $\eta^P$ a $\mu$-transversely invariant Riemannian metric on $P$ (see \th\ref{T: existence of G->M invariant metrics on P->M}), and $\eta^{(1)}$ a $1$-metric on $\G\rightrightarrows M$ induced by a $2$-metric. Set 
\begin{equation}\label{EQ: Cheeger deformation metric}
\widehat{\eta}_{\varepsilon} = \frac{1}{\varepsilon}\eta^{(1)} + \eta^P
\end{equation}
on $\G\times P$, and restrict it to $\G\times_M P$.

We claim that this metric induces a series of metrics $\eta_\varepsilon$ on $P$, which generalizes Cheeger deformations.

For this, we present without proof a series of Lemmas.

\begin{lemma}
Consider $\mu$ a Lie groupoid action of the proper Lie groupoid $\G\rightrightarrows M$ on $\alpha\colon P\to M$, and $\G\times_M P\rightrightarrows P$ the action groupoid. Then for $p\in P$
\[
\overline{t}^{-1}(p) = \{(g,\mu(i(g),p))\mid g\in t^{-1}(\alpha(p))\} = t^{-1}(\alpha(p))\times L_p,
\]
where $L_p\subset P$ is the orbit of the action groupoid $\G\times_M P\rightrightarrows P$ that contains $p$.
\end{lemma}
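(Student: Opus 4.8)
The plan is to prove the two asserted equalities by a direct chase through the action and groupoid axioms, with the first equality carrying the real content. First I would prove the inclusion $\overline{t}^{-1}(p)\subseteq \{(g,\mu(i(g),p))\mid g\in t^{-1}(\alpha(p))\}$. Take $(g,q)\in \G\times_M P$ with $\overline{t}(g,q)=\mu(g,q)=p$. Applying axiom (i) of the action gives $t(g)=\alpha(\mu(g,q))=\alpha(p)$, so $g\in t^{-1}(\alpha(p))$. To recover $q$ I would act by $i(g)$: using axiom (ii), $\mu(i(g),p)=\mu(i(g),\mu(g,q))=\mu(\m(i(g),g),q)$, and the groupoid identity $\m(i(g),g)=\1_{s(g)}$ together with axiom (iii) and $s(g)=\alpha(q)$ yields $\mu(\1_{s(g)},q)=q$. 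Hence $q=\mu(i(g),p)$, giving the inclusion.

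For the reverse inclusion, for $g\in t^{-1}(\alpha(p))$ I must check that $(g,\mu(i(g),p))$ is a legitimate element of $\overline{t}^{-1}(p)$. The key compatibility to verify is $s(g)=\alpha(\mu(i(g),p))$: since $t(i(g))=s(g)$ one has $s(i(g))=t(g)=\alpha(p)$, so $\mu(i(g),p)$ is defined, and axiom (i) gives $\alpha(\mu(i(g),p))=t(i(g))=s(g)$, placing the pair in $\G\times_M P$. Finally $\overline{t}(g,\mu(i(g),p))=\mu(g,\mu(i(g),p))=\mu(\m(g,i(g)),p)=\mu(\1_{t(g)},p)=p$, using $\m(g,i(g))=\1_{t(g)}$, $t(g)=\alpha(p)$, and axiom (iii). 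This establishes the first equality. A cleaner, more conceptual route I would also record is to observe that the inversion $\overline{i}$ of the action groupoid interchanges source- and target-fibers, i.e.\ $\overline{s}\circ\overline{i}=\overline{t}$ and $\overline{t}\circ\overline{i}=\overline{s}$, so that $\overline{t}^{-1}(p)=\overline{i}(\overline{s}^{-1}(p))$; since $\overline{s}^{-1}(p)=s^{-1}(\alpha(p))\times\{p\}$ and $\overline{i}(g,p)=(i(g),\mu(g,p))$, reindexing by the involution $i\colon t^{-1}(\alpha(p))\to s^{-1}(\alpha(p))$ reproduces exactly the graph $\{(g,\mu(i(g),p))\}$.

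For the second description as $t^{-1}(\alpha(p))\times L_p$, the content is that $g\mapsto (g,\mu(i(g),p))$ is a bijection (indeed a diffeomorphism, with inverse the projection $\proj_{\G}$) from the $t$-fiber $t^{-1}(\alpha(p))$ onto $\overline{t}^{-1}(p)$, while the second-coordinate projection sends it onto the orbit $L_p=\overline{t}(\overline{s}^{-1}(p))=\{\mu(h,p)\mid h\in s^{-1}(\alpha(p))\}$, the surjectivity following again from $i$ carrying $t^{-1}(\alpha(p))$ bijectively onto $s^{-1}(\alpha(p))$. I expect the main obstacle to be interpreting this product correctly: the fiber is literally the \emph{graph} of $g\mapsto\mu(i(g),p)$, so the two factors are not independent, and the symbol $t^{-1}(\alpha(p))\times L_p$ is best read as recording that the fiber is parametrized by the $t$-fiber and projects onto the full orbit $L_p$, rather than as a genuine Cartesian product. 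Making this identification precise, and compatible with the smooth structures so that it is usable in the subsequent Riemannian-submersion arguments for the Cheeger deformation, is the step that requires the most care.
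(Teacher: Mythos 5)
Your proof is correct, but there is no proof in the paper to measure it against: the paper explicitly introduces this lemma as one of ``a series of Lemmas'' presented \emph{without proof}, so your argument supplies something the text omits rather than paralleling an existing argument. Your verification of the first equality is complete and careful: both inclusions follow from the action axioms together with the groupoid identities $\m(i(g),g)=\1_{s(g)}$ and $\m(g,i(g))=\1_{t(g)}$, and you check the membership conditions ($s(i(g))=t(g)=\alpha(p)$, $\alpha(\mu(i(g),p))=s(g)$, etc.) exactly where they are needed to invoke axiom (ii). The alternative route via $\overline{t}=\overline{s}\circ\overline{i}$, $\overline{s}^{-1}(p)=s^{-1}(\alpha(p))\times\{p\}$, and reindexing by the involution $i$ is a clean conceptual shortcut and is the groupoid analogue of the paper's own observation in the Lie group case that $\tilde{\phi}^{-1}(p)$ is the orbit $G(e,p)$.

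Your caveat about the second equality is not pedantry; it is the mathematically correct reading, and you could state it even more forcefully. As a set, $\overline{t}^{-1}(p)$ is the graph of $g\mapsto\mu(i(g),p)$, hence diffeomorphic to $t^{-1}(\alpha(p))$ and of dimension $\dim\G-\dim M$; a genuine Cartesian product $t^{-1}(\alpha(p))\times L_p$ would have dimension $\dim t^{-1}(\alpha(p))+\dim L_p$, so the displayed ``equality'' cannot hold literally unless the orbit is zero-dimensional. The same abuse of notation recurs in the paper's next lemma, which writes $\kernel D_{(g,p)}\overline{t}=\kernel D_g t\times T_p L_p$, while the paper's own subsequent description of the vertical space, $\ver(\1_{\alpha(p)},p)=\{(x,X^\ast(p))\mid x\in\kernel D_{\1_{\alpha(p)}}s\}$, is again a graph of the correct dimension; this corroborates your interpretation that the product symbol is shorthand for ``parametrized by the $t$-fiber, projecting onto the full orbit $L_p$ in the second factor,'' which is precisely the form in which the lemma is used in the Riemannian-submersion arguments that follow.
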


With the previous description of the fibers of the submersion $\overline{t}\colon \G\times_M P\rightrightarrows P$ we get the following description for the $\overline{t}$-vertical space $\kernel D\overline{t}$.

\begin{lemma}\th\label{L: Kernel of bar(t)}
Consider  $\G\times_M P\rightrightarrows P$ the action groupoid of a Lie groupoid  action of the proper Lie groupoid $\G\rightrightarrows M$ on $\alpha\colon P\to M$. Then
\[
\kernel D_{(g,p)}\overline{t} = \kernel D_gt \times T_{p}L_p,
\]
where $L_p\subset P$ is the orbit of the action groupoid $\G\times_M P\rightrightarrows P$ containing $p$.
\end{lemma}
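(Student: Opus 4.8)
The plan is to use that $\overline{t}$ is the target map of the Lie groupoid $\G\times_M P\rightrightarrows P$, hence a surjective submersion, so that $\kernel D_{(g,p)}\overline{t}$ is precisely the tangent space at $(g,p)$ of the fiber $\overline{t}^{-1}(\mu(g,p))$. The preceding lemma gives an explicit parametrization of this fiber, and differentiating it will produce the claimed description. Throughout one reads the statement $\kernel D_{(g,p)}\overline{t}=\kernel D_g t\times T_pL_p$ as an identity witnessed by the two projections of $T_g\G\oplus T_pP$, the first projection being an isomorphism onto $\kernel D_g t$ and the second being onto $T_pL_p$.

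First I would record the parametrization. Setting $q:=\mu(g,p)$, the previous lemma identifies $\overline{t}^{-1}(q)$ with the image of the smooth map $\sigma\colon t^{-1}(t(g))\to\G\times_M P$, $\sigma(h)=(h,\mu(i(h),q))$, which sends $g$ to $(g,p)$ and whose first component is a diffeomorphism onto $t^{-1}(t(g))$. Differentiating $\sigma$ at $h=g$ along $X\in T_g\big(t^{-1}(t(g))\big)=\kernel D_g t$ gives $D_g\sigma(X)=(X,\Phi(X))$ with second component $\Phi(X)=D_{(i(g),q)}\mu(D_g i(X),0)$. This already shows that the first projection maps $\kernel D_{(g,p)}\overline{t}$ isomorphically onto $\kernel D_g t$.

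Next I would identify the target of $\Phi$. Since $s\circ i=t$, for $X\in\kernel D_g t$ the vector $D_g i(X)$ lies in $\kernel D_{i(g)}s$, so $\Phi(X)$ is an infinitesimal action vector in a source-kernel direction based at $\mu(i(g),q)=p$; as the orbit map $k\mapsto\mu(k,q)$ on $s^{-1}(\alpha(q))$ is a surjective submersion onto the orbit $L_q=L_p$, its differential carries $\kernel D_{i(g)}s$ onto $T_pL_p$. Because $D_g i$ maps $\kernel D_g t$ isomorphically onto $\kernel D_{i(g)}s$, the map $\Phi$ is onto $T_pL_p$. Hence
\[
\kernel D_{(g,p)}\overline{t}=\{(X,\Phi(X)) : X\in\kernel D_g t\},
\]
the graph of $\Phi$, whose first projection is an isomorphism onto $\kernel D_g t$ and whose second projection is onto $T_pL_p$, which is the asserted identity.

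The point requiring genuine care is that the decomposition is \emph{not} an internal direct sum: the subspace $\kernel D_g t\times\{0\}$ is generally not contained in the kernel (already for a Lie group action $X\in\kernel D_g t$ produces the nonzero action field $\Phi(X)$), so the product in the statement must be interpreted through the two projections, and the delicate step is verifying that $\Phi$ lands in and surjects onto $T_pL_p$ via the infinitesimal action rather than onto a larger subspace of $T_pP$. An equivalent route avoiding the parametrization uses $\overline{t}=\overline{s}\circ\overline{i}$ with the diffeomorphism $\overline{i}(g,p)=(i(g),\mu(g,p))$: since $\kernel D_{(h,q)}\overline{s}=\kernel D_h s\times\{0\}$ is immediate from $\overline{s}(h,q)=q$, one obtains $\kernel D_{(g,p)}\overline{t}=(D_{(g,p)}\overline{i})^{-1}\big(\kernel D_{i(g)}s\times\{0\}\big)$, which again forces $X\in\kernel D_g t$ in the first factor, the second factor being identified with $T_pL_p$ exactly as above.
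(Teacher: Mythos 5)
Your proof is correct. Note that the paper itself gives no proof of this lemma — it belongs to a series of statements explicitly ``presented without proof,'' with the fiber description $\overline{t}^{-1}(p)=\{(g,\mu(i(g),p))\mid g\in t^{-1}(\alpha(p))\}$ stated immediately beforehand as the intended source — and your argument is exactly the derivation that setup calls for: differentiate the graph parametrization of the fiber and identify the image of the second component with $T_pL_p$ via the orbit map $k\mapsto\mu(k,q)$ on the source fiber. Your insistence that the stated ``product'' can only be read through its two projections is not pedantry but necessary for the statement to be true (the kernel has dimension $\dim\kernel D_gt$, not $\dim\kernel D_gt+\dim T_pL_p$, so it is a graph rather than a direct sum), and this reading is consistent with the paper's own subsequent description $\ver(\1_{\alpha(p)},p)=\{(x,X^\ast(p))\mid x\in\kernel D_{\1_{\alpha(p)}}s\}$ in Lemma \ref{L: Vertical space of target map of action Lie groupoid}.
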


From this description of $\kernel D \overline{t}$, we deduce that $\bar{t}$ is a Riemannian submersion with respect to $\widehat{\eta}_\varepsilon$.

\begin{thm}\th\label{T: contraction of groupoid induces cheeger deformation}
Consider $\G\times_M P\rightrightarrows P$ the action groupoid of a Lie groupoid  action $\mu$ of the proper Lie groupoid $\G\rightrightarrows M$ on $\alpha\colon P\to M$. Equip the metric $\widehat{\eta}_\varepsilon$ on $\G\times_M P$. Then $\overline{t}\colon (\G\times_M P,\widehat{\eta}_\varepsilon)\to P$ is a Riemannian submersion.
\end{thm}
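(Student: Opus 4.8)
The plan is to prove that $\overline{t}$ is a Riemannian submersion by verifying the defining property directly, namely that its fibers are locally equidistant for $\widehat{\eta}_\varepsilon$. This mirrors the classical Cheeger construction, where $\tilde\phi$ is a Riemannian submersion because its fibers are the orbits of the diagonal isometric action $\overline{\mu}$ on $G\times M$. The starting point is \th\ref{L: Kernel of bar(t)}, which identifies the vertical distribution as $\kernel D_{(g,p)}\overline{t} = \kernel D_g t \times T_p L_p$, together with the preceding lemma describing $\overline{t}^{-1}(q)$ as $\{(g,\mu(i(g),q)) \mid g \in t^{-1}(\alpha(q))\}$. Thus the problem reduces to controlling the transverse geometry of these explicitly parametrized fibers.

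First I would exhibit the fibers of $\overline{t}$ as the orbits of a natural action $\Psi$ of $\G\rightrightarrows M$ on $\G\times_M P$, along the moment map $(g,p)\mapsto s(g)=\alpha(p)$, defined by
\[
\Psi\big(k,(g,p)\big) = \big(\m(g,i(k)),\mu(k,p)\big), \qquad s(k)=s(g).
\]
A direct check using $i(\m(g,i(k))) = \m(k,i(g))$ and $\mu(\m(i(g),g),p) = p$ shows that $\overline{t}\circ\Psi_k = \overline{t}$, and that as $k$ ranges over $s^{-1}(\alpha(p))$ the point $\Psi(k,(g,p))$ sweeps out exactly $\overline{t}^{-1}(\mu(g,p))$. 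Hence the $\overline{t}$-fibers are precisely the orbits of $\Psi$, and since $\G$ is proper, $\Psi$ is a proper action. Consequently, showing the fibers are equidistant becomes equivalent to showing that $\widehat{\eta}_\varepsilon$ is transversely $\Psi$-invariant in the sense of \th\ref{D: mu-transversly invariant}, i.e. that the normal representation of $\Psi$ acts by isometries on the normal bundle of its orbits.

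Next I would split this transverse-invariance check across the two orthogonal summands of $\widehat{\eta}_\varepsilon = \tfrac{1}{\varepsilon}\eta^{(1)} \oplus \eta^P$. On the $P$-factor the action is $p\mapsto\mu(k,p)$, and $\eta^P$ is $\mu$-transversely invariant by hypothesis, so the normal representation is already isometric there. On the $\G$-factor the action is the right translation $g\mapsto\m(g,i(k))$, and here the $1$-metric structure enters: since $\eta^{(1)}$ is a $1$-metric, $s$ and hence $t=s\circ i$ are Riemannian submersions, so the $t$-fibers $t^{-1}(x)$ are equidistant in $(\G,\eta^{(1)})$. The remaining work is to promote this to transverse invariance of the combined metric, by verifying that the orthogonal splitting of $T(\G\times_M P)$ into the orbit-tangent distribution $\kernel D_g t\times T_p L_p$ and its complement is preserved by $\Psi$, using that both factors project compatibly to $M$ through $s$ and $\alpha$ so the fiber-product constraint is respected. (One could instead try to factor $\overline{t}=\overline{s}\circ\overline{i}$ through the inversion of the action groupoid, but $\overline{i}(g,p)=(i(g),\mu(g,p))$ mixes the two factors, so this route faces the same coupling difficulty.)

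The main obstacle will be exactly this $\G$-factor step. Unlike the classical case, where $\QQ$ is bi-invariant and right translation is an honest isometry, a $1$-metric coming from a $2$-metric need not make right translations isometries (Remark~\ref{R: 2-metrics on Lie groups}); so $\Psi$ cannot be shown to be globally isometric, and the argument must remain purely transverse, controlling only directions normal to the orbits. The key input is that for a Riemannian groupoid the right translations $R_{i(k)}$ restrict to isometries between the normal bundles of the $t$-fibers, which is what the $2$-metric hypothesis supplies, and one must check that the fiber-product coupling does not destroy this normal isometry. Once this is in place, the normal representation of $\Psi$ is isometric, the $\overline{t}$-fibers are locally equidistant, and $\overline{t}\colon(\G\times_M P,\widehat{\eta}_\varepsilon)\to P$ is a Riemannian submersion, thereby defining the deformed metric $\eta_\varepsilon$ on $P$.
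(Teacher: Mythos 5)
Your reduction is attractive, and its first half is correct: the map $\Psi\big(k,(g,p)\big)=\big(\m(g,i(k)),\mu(k,p)\big)$ is a genuine left action of $\G\rightrightarrows M$ on $\G\times_M P$ along $(g,p)\mapsto s(g)$, it satisfies $\overline{t}\circ\Psi_k=\overline{t}$, and its orbits are exactly the fibers $\overline{t}^{-1}(q)=\{(h,\mu(i(h),q))\mid h\in t^{-1}(\alpha(q))\}$. Moreover, since the normal representation of the associated action groupoid becomes the identity on $T_qP$ under the canonical identification of $N\big(\overline{t}^{-1}(q)\big)$ with $T_qP$ via $D\overline{t}$, transverse $\Psi$-invariance of $\widehat{\eta}_\varepsilon$ is indeed equivalent to the fibers being equidistant, i.e.\ to the assertion of the theorem. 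This is the correct groupoid analogue of the classical observation that the fibers of $\tilde{\phi}$ are the orbits of the diagonal $G$-action on $G\times M$.

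However, the proposal stops exactly where the content of the theorem begins. Your plan for verifying transverse invariance --- checking it separately on the $\frac{1}{\varepsilon}\eta^{(1)}$-summand (right translations versus $t$-fibers) and on the $\eta^P$-summand ($\mu$-transverse invariance), then ``promoting'' to the fiber product --- cannot succeed as a formal splitting argument, because neither the orbit-tangent space nor its $\widehat{\eta}_\varepsilon$-orthogonal complement respects the factor decomposition. The tangent space to a $\Psi$-orbit is not the product $\kernel D_gt\times T_pL_p$ you quote (that set has dimension $\dim\G-\dim M+\dim L_p$, which is too large); it is the graph-type subspace of \emph{coupled} pairs, as in \th\ref{L: Vertical space of target map of action Lie groupoid}, where a vector $x$ in the $\G$-factor is paired with $X^\ast(p)$ in the $P$-factor. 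Consequently the horizontal space mixes the factors as well: by \th\ref{T: bar(t)-Horizontal distribution} a horizontal vector has the form $\left(-\varepsilon\Sh(p)(x)+\frac{1}{\varepsilon}\zeta,\,X^\ast(p)+\xi\right)$, with the $\G$-component $-\varepsilon\Sh(p)(x)$ determined by the $P$-component $X^\ast(p)$. Factor-wise transverse invariance says nothing about how $\widehat{\eta}_\varepsilon$ evaluates on such coupled vectors; this interaction is precisely what produces the Cheeger tensor $(\Id+\varepsilon\Sh(p))^{-1}$ rather than a product formula, and in the classical case it is dispatched only because bi-invariance of $\QQ$ makes the diagonal action by honest isometries --- the very ingredient you correctly note is unavailable here. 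So the missing step is the actual computation the surrounding statements encapsulate (the survey states them without proof, deferring to \cite{Corro2025}): describe $\ver=\kernel D\overline{t}$ and $\hor=\ver^\perp$ explicitly, and then use the $1$-metric property of $\eta^{(1)}$ together with transverse $\mu$-invariance of $\eta^P$ to check that the inner product induced on $T_qP$ by horizontal lifting is the same at every point of the fiber. Your reformulation via $\Psi$ is a clean way to organize that check, but as written it leaves the decisive verification as an acknowledged piece of ``remaining work,'' so it is not yet a proof.
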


\begin{definition}\th\label{D: Cheeger deformation}
Given $\G\rightrightarrows M$ a proper Lie groupoid acting on $\alpha\colon  P\to M$ via $\mu$, an $\eta^{(1)}$ a $1$-metric on $\G$ induced by some $2$-metric, and a $\mu$-transversely invariant metric $\eta^P$ on $P$, we define the \emph{Cheeger deformation} of $\eta^P$ with respect to $\eta^{(1)}$ as the Riemannian metric $\eta_\varepsilon$ on $P$ induced by the Riemannian submersion $(\G\times_{M} P,\widehat{\eta}_\varepsilon)\to P$. 
\end{definition}

\subsection{Horizontal distribution} Given the Riemannian submersion $\overline{t}\colon (\G\times_M P,\eta^{(1)}+\eta^P)\to P$, we consider the vertical distribution $\ver\colon \G\times_M P\to T ( \G\times_M P)$, given by $\ver(g,p) = \kernel D_{(g,p)}\overline{t}$.

From now for $p\in P$ and  $X\in \kernel D_{\1_{\alpha(p)}}s\subset T_{\1_{\alpha(p)}}\G$, we denote by $X^\ast(p)\in T_p P$ the vector given by 
\[
	X^\ast(p) = -D_{(\1_{\alpha(p)},p)}\,\overline{t}(X,0).
\]

We define the \emph{Orbit tensor} $\Sh(p)\colon \kernel D_{\1_{\alpha(p)}}s\to \kernel D_{\1_{\alpha(p)}}s$ by the following relationship:
\begin{linenomath}
\begin{align*}
\eta^{(1)}(\Sh(p)(x),y) =& \eta^P(D_{(\1_{\alpha(p)},p)}\overline{t}(X,0),D_{(\1_{\alpha(p)},p)}\overline{t}(Y,0))\\
 =& \eta^P (X^\ast (p),Y^\ast (p)).
\end{align*}
\end{linenomath}
	
We give an alternative description of $\kernel D\overline{t}$, the vertical space of $\overline{t}$, that is similar to the one presented in \cite[Lemma~3.2]{Mueter}.

\begin{lemma}\th\label{L: Vertical space of target map of action Lie groupoid}
Let the proper Lie groupoid $\G\rightrightarrows M$ act effectively and smoothly on $\alpha\colon P\to M$. Then we have 
\[
	\ver(\1_{\alpha(p)},p) = \{(x,X^\ast(p))\mid x\in \kernel D_{\1_{\alpha(p)}} s\}.
\]
\end{lemma}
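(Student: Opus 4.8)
The plan is to identify the vertical space $\ver(\1_{\alpha(p)},p)=\kernel D_{(\1_{\alpha(p)},p)}\overline{t}$ with the tangent space of the $\overline{t}$-fibre through $(\1_{\alpha(p)},p)$, to exhibit the right-hand side as a subspace of $\ver(\1_{\alpha(p)},p)$ by a direct differential computation, and then to conclude equality by a dimension count. First I would invoke \th\ref{L: Kernel of bar(t)}, which already gives $\kernel D_{(\1_{\alpha(p)},p)}\overline{t}=\kernel D_{\1_{\alpha(p)}}t\times T_pL_p$; in particular the vertical space has dimension $\dim\G-\dim M=\dim\kernel D_{\1_{\alpha(p)}}s$, the rank of the associated Lie algebroid. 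This equality of dimensions is exactly what makes a graph description over $\kernel D_{\1_{\alpha(p)}}s$ plausible, and it reduces the problem to producing one inclusion.

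Next I would prove the inclusion $\supseteq$. Fix $x\in\kernel D_{\1_{\alpha(p)}}s$. Since $D_{\1_{\alpha(p)}}s(x)=0=D_p\alpha(0)$, the pair $(x,0)$ is a genuine tangent vector of $\G\times_M P$ at $(\1_{\alpha(p)},p)$, so that $X^\ast(p)=-D_{(\1_{\alpha(p)},p)}\overline{t}(x,0)$ is well defined and depends linearly on $x$; moreover, as $x$ ranges over $\kernel D_{\1_{\alpha(p)}}s$ the vectors $X^\ast(p)$ sweep out the orbit tangent space $T_pL_p$ of \th\ref{L: Kernel of bar(t)}. Using the unit identity $\overline{t}(\1_{\alpha(p)},q)=\mu(\1_{\alpha(p)},q)=q$ for $q$ in the $\alpha$-fibre over $\alpha(p)$, together with the bilinearity of the action differential, I would then compute
\[
D_{(\1_{\alpha(p)},p)}\overline{t}\big(x,X^\ast(p)\big)=D_{(\1_{\alpha(p)},p)}\overline{t}(x,0)+D_{(\1_{\alpha(p)},p)}\overline{t}\big(0,X^\ast(p)\big)=-X^\ast(p)+X^\ast(p)=0,
\]
so that $(x,X^\ast(p))\in\ver(\1_{\alpha(p)},p)$, the second summand being evaluated via the fact that $\overline{t}$ restricts to the identity on the slice $\{\1_{\alpha(p)}\}\times\alpha^{-1}(\alpha(p))$.

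Finally I would close the argument by dimension. The assignment $x\mapsto(x,X^\ast(p))$ is injective, since its first component recovers $x$, so its image has dimension $\dim\kernel D_{\1_{\alpha(p)}}s=\dim\G-\dim M$, which by \th\ref{L: Kernel of bar(t)} equals $\dim\ver(\1_{\alpha(p)},p)$. Combined with the inclusion just established, this forces the graph to be all of the vertical space, giving the claimed identity.

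I expect the main obstacle to be the differential computation at the unit $\1_{\alpha(p)}$: one must carefully justify the bilinear splitting of $D_{(\1_{\alpha(p)},p)}\overline{t}$ into its $\G$- and $P$-components, and in particular the evaluation of the term $D_{(\1_{\alpha(p)},p)}\overline{t}(0,X^\ast(p))$, which rests on the unit, source and target identities of the action groupoid and on recognizing the $P$-component as the velocity of a curve lying in the orbit $L_p$. This is the groupoid analogue of \cite[Lemma~3.2]{Mueter}, and the extra care relative to the Lie group case comes precisely from keeping track of the anchor map $\alpha$ and the embedding $\1$ when passing between $\kernel D_{\1_{\alpha(p)}}s$, the orbit directions, and the ambient fibre product $\G\times_M P$.
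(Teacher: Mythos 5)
Your proposal cannot be checked against the paper's own argument, because the paper states this lemma (and its companions in Section~\ref{S: Cheeger like deformation for Lie groupoids actions}) explicitly \emph{without proof}, deferring to \cite{Corro2025}; so it must stand on its own, and it has a genuine gap at precisely the step you yourself single out as the main obstacle. The map $\overline{t}=\mu$ is defined only on the fibre product $\G\times_M P$, so $D_{(\1_{\alpha(p)},p)}\overline{t}$ can be applied only to pairs $(x,u)$ satisfying the tangency condition $D_{\1_{\alpha(p)}}s(x)=D_p\alpha(u)$. Differentiating the action axiom $\alpha(\mu(g,q))=t(g)$ along a curve in $s^{-1}(\alpha(p))\times\{p\}$ through the unit gives $D_p\alpha(X^\ast(p))=-D_{\1_{\alpha(p)}}t(x)$, i.e.\ minus the anchor applied to $x$. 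Consequently $(0,X^\ast(p))$ --- and indeed $(x,X^\ast(p))$ itself --- is tangent to $\G\times_M P$ only when $D_{\1_{\alpha(p)}}t(x)=0$, which fails whenever the $\G$-orbit of $\alpha(p)$ in $M$ has positive dimension; that is the generic case in this paper, where the groupoids $\G\rightrightarrows L$ of \th\ref{T: Local transversal homogeneous subfoliation} are transitive over the leaf. Your justification that ``$\overline{t}$ restricts to the identity on $\{\1_{\alpha(p)}\}\times\alpha^{-1}(\alpha(p))$'' evaluates $D\overline{t}(0,u)$ only for $u\in\kernel D_p\alpha$, and $X^\ast(p)$ is in general not such a vector. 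The bilinear splitting you rely on is automatic in the Lie group case $M=\{\ast\}$, where the anchor vanishes identically; your argument is a transcription of \cite[Lemma~3.2]{Mueter} that breaks exactly at the one point where the groupoid setting differs from the group setting.

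The gap also cannot be patched without adjusting the statement, because the identity, read literally as an equality of subspaces of $T_{(\1_{\alpha(p)},p)}(\G\times_M P)$, needs a correction term. Parametrizing the fibre $\overline{t}^{-1}(p)=\{(g,\mu(i(g),p))\mid g\in t^{-1}(\alpha(p))\}$ by $g$ and differentiating at the unit yields
\[
\ver(\1_{\alpha(p)},p)=\big\{\big(Di(x),-X^\ast(p)\big)\mid x\in\kernel D_{\1_{\alpha(p)}}s\big\}=\big\{\big(x-D\1(Dt(x)),\,X^\ast(p)\big)\mid x\in\kernel D_{\1_{\alpha(p)}}s\big\},
\]
using that $Di(x)=D\1(Dt(x))-x$ for $x\in\kernel D_{\1_{\alpha(p)}}s$ at a unit; this coincides with the formula in the lemma exactly when $Dt$ vanishes on $\kernel D_{\1_{\alpha(p)}}s$. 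A concrete check: for the pair groupoid $M\times M\rightrightarrows M$ with its canonical action on $\Id_M\colon M\to M$ (which does satisfy the standing hypothesis $\nu_p(L_p)\subset T_p\alpha^{-1}(\alpha(p))$), the true vertical space at $((p,p),p)$ is $\{((0,w),w)\mid w\in T_pM\}$, whereas the set $\{((v,0),-v)\mid v\in T_pM\}$ produced by your argument meets $T(\G\times_M P)$ only in the zero vector. Two smaller points: your dimension count is correct in value but does not follow from \th\ref{L: Kernel of bar(t)} as stated --- that formula would give $\dim\kernel Dt+\dim L_p$ --- it follows from $\overline{t}$ being a submersion; and the overall strategy (one inclusion plus a dimension count) is fine in principle, it is the inclusion itself that fails. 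A correct write-up should establish the graph description with the correction term $-D\1(Dt(x))$, or equivalently state the lemma after projecting first components to $\kernel D_{\1_{\alpha(p)}}s$ along the unit directions $D\1(T_{\alpha(p)}M)$.
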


For the Riemannian metric $\widehat{\eta}_\varepsilon$ on the action groupoid $\G \times_M P\rightrightarrows P$, we denote by $\hor(g,p) = (\kernel D_{(g,p)}\,\overline{t})^\perp$ the horizontal space of the target map $\overline{t}\colon \G \times_M P\to P$, and proof the following description. 

\begin{thm}\th\label{T: bar(t)-Horizontal distribution}
Consider a proper Lie groupoid $\G\rightrightarrows M$ a Lie groupoid acting on $\alpha\colon P\to M$. Consider the metric $\widehat{\eta}_\varepsilon = \frac{1}{\varepsilon}\eta^{(1)}+\eta^{(P)}$ on the action groupoid $\G\times_M P\rightrightarrows P$. Then The $\overline{t}$-horizontal space with respect to $\widehat{\eta}_\varepsilon$ is given by:
\begin{linenomath}
\begin{align*}
\hor_\varepsilon(\1_{\alpha(p)},p) = \Bigg\{\left(-\varepsilon\Sh(p)(x)+\frac{1}{\varepsilon}\zeta,X^\ast(p)+\xi\right)\,\Big|\, & x\in \kernel D_{\1_{\alpha(p)}} s,\\
&\zeta\in (\kernel D_{\1_{\alpha(p)}} s)^\perp,\\
 &\mbox{ and } \xi\in  T_p(L_p)^\perp\Bigg\}.
\end{align*}
\end{linenomath}
\end{thm}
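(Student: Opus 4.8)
The plan is to identify $\hor_\varepsilon(\1_{\alpha(p)},p)$ as the $\widehat{\eta}_\varepsilon$-orthogonal complement of the vertical space inside $T_{(\1_{\alpha(p)},p)}(\G\times_M P)$, using the explicit description of the vertical space from \th\ref{L: Vertical space of target map of action Lie groupoid}, namely $\ver(\1_{\alpha(p)},p) = \{(y,Y^\ast(p))\mid y\in \kernel D_{\1_{\alpha(p)}}s\}$. Since $\overline{t}$ is a Riemannian submersion with respect to $\widehat{\eta}_\varepsilon$ by \th\ref{T: contraction of groupoid induces cheeger deformation}, the restriction $D\overline{t}\colon \hor_\varepsilon(\1_{\alpha(p)},p)\to T_pP$ is a linear isomorphism; hence it suffices to exhibit, for each $w\in T_pP$, its unique horizontal lift and to check that it has the stated form.

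First I would fix $w\in T_pP$ and decompose it along $T_pP = T_pL_p\oplus T_p(L_p)^\perp$ as $w = X^\ast(p)+\xi$, choosing $x\in \kernel D_{\1_{\alpha(p)}}s$ whose action vector $X^\ast(p)$ is the orbit-tangent part (this is well defined modulo the isotropy, which does not affect $\Sh(p)(x)$ by the defining relation of the orbit tensor). I would then propose the lift $\widehat{w} = \big(-\varepsilon\Sh(p)(x)+\tfrac{1}{\varepsilon}\zeta,\; X^\ast(p)+\xi\big)$ and determine $\zeta\in (\kernel D_{\1_{\alpha(p)}}s)^\perp$ by requiring that $\widehat{w}$ be tangent to $\G\times_M P$. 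Because $s$ is a submersion and $\eta^{(1)}$ is a $1$-metric, $Ds$ restricts to an isomorphism $(\kernel D_{\1_{\alpha(p)}}s)^\perp\to T_{\alpha(p)}M$; since $\Sh(p)(x)\in \kernel D_{\1_{\alpha(p)}}s$, the tangency condition $\tfrac{1}{\varepsilon}Ds(\zeta) = D\alpha(w)$ determines $\zeta$ uniquely and places $\widehat{w}$ in $T_{(\1_{\alpha(p)},p)}(\G\times_M P)$.

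The key step is the orthogonality computation. For an arbitrary vertical vector $(y,Y^\ast(p))$ with $y\in \kernel D_{\1_{\alpha(p)}}s$ I would compute
\begin{align*}
\widehat{\eta}_\varepsilon\big(\widehat{w},(y,Y^\ast(p))\big) &= \tfrac{1}{\varepsilon}\eta^{(1)}\big(-\varepsilon\Sh(p)(x)+\tfrac{1}{\varepsilon}\zeta,\, y\big)+\eta^P\big(X^\ast(p)+\xi,\,Y^\ast(p)\big)\\
&= -\eta^{(1)}\big(\Sh(p)(x),y\big)+\eta^P\big(X^\ast(p),Y^\ast(p)\big)+\eta^P\big(\xi,Y^\ast(p)\big),
\end{align*}
where $\eta^{(1)}(\zeta,y)=0$ because $\zeta\in(\kernel D_{\1_{\alpha(p)}}s)^\perp$ and $y\in \kernel D_{\1_{\alpha(p)}}s$. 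Using the defining relation $\eta^{(1)}(\Sh(p)(x),y)=\eta^P(X^\ast(p),Y^\ast(p))$ of the orbit tensor, the first two terms cancel, and $\eta^P(\xi,Y^\ast(p))=0$ since $\xi\in T_p(L_p)^\perp$ and $Y^\ast(p)\in T_pL_p$. Hence $\widehat{w}\perp \ver(\1_{\alpha(p)},p)$, so $\widehat{w}\in \hor_\varepsilon(\1_{\alpha(p)},p)$. Notice that this computation does not use any extra compatibility between $\alpha$ and the orbit-normal directions, so the description holds in full generality.

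Finally I would close the argument with a dimension count: the assignment $w\mapsto \widehat{w}$ is linear and injective (its $P$-component is $w$) and lands in $\hor_\varepsilon(\1_{\alpha(p)},p)$, which has dimension $\dim P$ because $D\overline{t}$ restricts there to an isomorphism; therefore it is onto, and $\hor_\varepsilon(\1_{\alpha(p)},p)$ consists exactly of the vectors of the claimed form. The main obstacle I anticipate is the bookkeeping of the tangency (fiber-product) constraint: one must verify that the parameters $\zeta$ and $\xi$ appearing in the statement are in fact coupled through $\tfrac{1}{\varepsilon}Ds(\zeta)=D\alpha(X^\ast(p)+\xi)$, so that the displayed set is genuinely contained in $T(\G\times_M P)$, and that this coupling is resolved by the submersion property of $s$. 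The orthogonality itself is the short computation above and presents no real difficulty.
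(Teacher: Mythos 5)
Your proof is correct, and since the paper states \th\ref{T: bar(t)-Horizontal distribution} without proof (it is quoted from \cite{Corro2025}), the natural comparison is with the route suggested by the surrounding material, which is exactly yours: take the vertical space from \th\ref{L: Vertical space of target map of action Lie groupoid}, kill the mixed terms using the defining identity $\eta^{(1)}(\Sh(p)(x),y)=\eta^P(X^\ast(p),Y^\ast(p))$ of the orbit tensor together with $\eta^{(1)}(\zeta,y)=0$ and $\eta^P(\xi,Y^\ast(p))=0$, and close with a dimension count---including the genuinely needed observation that $\zeta$ is coupled to $X^\ast(p)+\xi$ by the fiber-product condition $\tfrac{1}{\varepsilon}Ds(\zeta)=D\alpha(X^\ast(p)+\xi)$, without which the displayed set would not even lie in $T(\G\times_M P)$. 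One phrase should be corrected: your $\widehat{w}$ is a horizontal vector whose $P$-component is $w$, not ``the horizontal lift of $w$'' (horizontal lifts project under $D\overline{t}$ to Cheeger-tensor images rather than to the original vectors, cf. \th\ref{T: h is horizontal lift of cheeger}), but your argument only uses linearity, injectivity via the $P$-component, and the dimension count, so nothing breaks.
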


\subsection{Cheeger tensor} We define the \emph{Cheeger tensor of $\eta^P$ along the action of $\G\rightrightarrows M$ on $\alpha\colon P\to M$} for the metric $\eta_\varepsilon$ induced on $P$ by  \th\ref{T: contraction of groupoid induces cheeger deformation} to be the map $\Ch^{-1}_\varepsilon(p)\colon T_p P\to T_pP$ defined as follows: We write $X\in T_pP$ as $X = X^\top+X^\perp$, where $X^\top \in T_pL_p$ and $X^\perp \in \nu_p(L_p)$. Thus, there exists an $x\in \kernel D_{\1_{\alpha(p)}}\, s$ such that $X^\ast(p) = X^\top$. With this decomposition $X = X^\ast(p)+X^\perp$ we set
\[
\Ch^{-1}_\varepsilon(p)(X^\ast(p)+X^\perp ) = (\Id+\varepsilon\Sh(p)(x))^\ast(p)+X^\perp
\]

\begin{lemma}\th\label{L: Cheeger tensor does not depend of choice of x such that xast is X}
Let $\G\times_M P\rightrightarrows P$ be the groupoid action of a Lie groupoid action of the proper Lie groupoid $\G\rightrightarrows M$ on $\alpha\colon P\to M$. For $X\in TP$, such that $X(p)\in T_p L_p$ for all $p\in P$, the Cheeger tensor $\Ch^{-1}_\varepsilon(p)$ does not depend on the choice of $x\in \kernel D_{\1_{\alpha(p)}}\, s$ such that $-D_{\1_{\alpha(p)}}\overline{t}(x,0) = X(p)$. 
\end{lemma}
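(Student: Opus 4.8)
The plan is to reduce the claim to one linear-algebra identity, namely that the kernel of the orbit tensor $\Sh(p)$ coincides with the kernel of the surjective ``action map'' $\phi_p\colon \kernel D_{\1_{\alpha(p)}}s\to T_pL_p$, $\phi_p(x) := X^\ast(p) = -D_{(\1_{\alpha(p)},p)}\overline{t}(X,0)$. First I would pin down where the ambiguity lives. For a general $X(p)\in T_pP$ written as $X(p) = X^\top + X^\perp$ with $X^\top\in T_pL_p$ and $X^\perp\in\nu_p(L_p)$, the summand $X^\perp$ is determined by $X(p)$ alone and plays no role in the choice of $x$; under the hypothesis $X(p)\in T_pL_p$ we simply have $X^\perp = 0$ and $X(p) = X^\top = \phi_p(x)$. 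Hence the only freedom is the choice of $x$ in the fiber $\phi_p^{-1}(X^\top)$, and it suffices to prove that $((\Id+\varepsilon\Sh(p))(x))^\ast(p) = \phi_p\big((\Id+\varepsilon\Sh(p))(x)\big)$ is constant on this fiber, i.e.\ that $\phi_p\circ(\Id+\varepsilon\Sh(p))$ factors through $\phi_p$.

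The crux, and the step I expect to carry the whole argument, is the identity $\kernel\phi_p = \kernel\Sh(p)$, which I would extract directly from the defining relation $\eta^{(1)}(\Sh(p)(x),y) = \eta^P(X^\ast(p),Y^\ast(p))$. If $x\in\kernel\phi_p$, then $X^\ast(p)=0$ makes the right-hand side vanish for every $y$, and nondegeneracy of the $1$-metric $\eta^{(1)}$ forces $\Sh(p)(x)=0$. Conversely, taking $y=x$, if $\Sh(p)(x)=0$ then $0 = \eta^{(1)}(\Sh(p)(x),x) = \|X^\ast(p)\|^2_{\eta^P}$, and positive-definiteness of the Riemannian metric $\eta^P$ gives $X^\ast(p)=0$, i.e.\ $x\in\kernel\phi_p$. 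This is exactly the groupoid counterpart of the classical computation underlying \cite{Mueter}.

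With the identity established the conclusion is immediate and needs no further analysis. Given two admissible vectors $x,x'$ with $\phi_p(x)=\phi_p(x')$, their difference lies in $\kernel\phi_p = \kernel\Sh(p)$, so $\Sh(p)(x-x')=0$ and therefore $(\Id+\varepsilon\Sh(p))(x-x') = x-x' \in \kernel\phi_p$; applying $\phi_p$ yields $\phi_p\big((\Id+\varepsilon\Sh(p))(x)\big) = \phi_p\big((\Id+\varepsilon\Sh(p))(x')\big)$. Thus the tangential part of $\Ch^{-1}_\varepsilon(p)(X(p))$ is independent of the choice of $x$, while the normal part $X^\perp$ was already choice-independent, proving that $\Ch^{-1}_\varepsilon(p)$ is well defined. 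I note that self-adjointness and nonnegativity of $\Sh(p)$ (both consequences of the same defining relation and the positivity of $\eta^P$) are not actually needed for this lemma beyond guaranteeing that $\Id+\varepsilon\Sh(p)$ is invertible; the only role of properness of $\G\rightrightarrows M$ is to ensure, via \th\ref{T: existence of G->M invariant metrics on P->M}, the existence of the $\mu$-transversely invariant metric $\eta^P$ that makes $\Sh(p)$ available in the first place.
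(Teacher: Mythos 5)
Your proof is correct. Note that this lemma is one of the statements the paper presents explicitly \emph{without} proof --- Section~\ref{S: Cheeger like deformation for Lie groupoids actions} says the lemmas there are stated with proofs deferred to \cite{Corro2025} --- so there is no in-paper argument to compare yours against; it has to stand on its own, and it does. The reduction to the linear action map $\phi_p\colon \kernel D_{\1_{\alpha(p)}}s\to T_pL_p$ together with the kernel identity is the natural groupoid analogue of the classical computation in \cite{Mueter}, and each step is sound: the defining relation of the orbit tensor shows that $\eta^{(1)}(\Sh(p)(x),y)$ depends on $x$ only through $\phi_p(x)$, nondegeneracy of the restriction of $\eta^{(1)}$ to $\kernel D_{\1_{\alpha(p)}}s$ yields $\kernel\phi_p\subset\kernel\Sh(p)$, and then $(\Id+\varepsilon\Sh(p))(x-x')=x-x'\in\kernel\phi_p$ kills the ambiguity after applying $\phi_p$. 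Two minor remarks. First, only the inclusion $\kernel\phi_p\subset\kernel\Sh(p)$ is actually used; the converse inclusion, which you derive from positive definiteness of $\eta^P$, is true but superfluous, and an even shorter route is to observe that $\Sh(p)(x)$ itself depends only on $\phi_p(x)$ (immediate from the defining relation plus nondegeneracy of $\eta^{(1)}$), whence $\Sh(p)(x)=\Sh(p)(x')$ and $(\Id+\varepsilon\Sh(p))(x)-(\Id+\varepsilon\Sh(p))(x')=x-x'$. Second, your closing aside is slightly imprecise: $\Sh(p)$ is defined for \emph{any} Riemannian metric $\eta^P$ on $P$, so properness is not what makes $\Sh(p)$ available; rather, properness guarantees (via the existence results for $2$-metrics and for transversely invariant metrics) the ambient setup in which the deformation $\eta_\varepsilon$ is constructed, and likewise invertibility of $\Id+\varepsilon\Sh(p)$ is irrelevant to well-definedness of $\Ch^{-1}_\varepsilon(p)$ and only matters when one later inverts it. Neither remark affects the correctness of your argument.
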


We define the \emph{horizontal lift at $p\in P$ of tangent vectors of $P$} as the map $h(p)\colon T_p P \to T_{(\1_{\alpha(p)},p)}\G\times_M P$ given by
\begin{equation}
h(p)(X) = h(p)(X^\ast(p)+X^\perp) = (-\varepsilon\Sh(p)(x), X).\tag{\mbox{Horizontal}}\label{E: Horizontal lifts of TGXP->P at 1(P)}
\end{equation}
Here we are using the fact that for $X^\top\in T_p(L_p)$ there exists an $x\in \kernel D_{\1_{\alpha(p)}}\, s$ such that $X^\ast(p) = X^\top$. We observe that it follows from the proof of \th\ref{L: Cheeger tensor does not depend of choice of x such that xast is X}, that $\Sh(p)(x)$ does not depend of the choice of $x\in \kernel D_{\1_{\alpha(p)}} s$, such that $X^\ast(p) = X^\top$. In other words, the definition of $h$ does not depend on the choice of $x\in \kernel D_{\1_{\alpha(p)}} s$.

\begin{remark}
From the description of $\hor(\1_{\alpha(p)},p)$ in \th\ref{T: bar(t)-Horizontal distribution} we conclude that $h(p)(X)\in \hor(\1_{\alpha(p)},p)$.
\end{remark}

The following theorem states that the horizontal vector $h(p)$ defined by \eqref{E: Horizontal lifts of TGXP->P at 1(P)} is the horizontal lift of the Cheeger tensor $\Ch^{-1}_\varepsilon(p)$.

\begin{thm}\th\label{T: h is horizontal lift of cheeger}
Consider $\G\rightrightarrows M$ a proper Lie groupoid acting on $\alpha\colon P\to M$ via $\mu$, and $\eta^{(1)}$ a $1$-metric  on $\G$. Let $\eta^P$ an $\mu$-transversely invariant Riemannian metric on $P$, and assume that for all $p\in M$ we have $\nu_p (L_p)\subset T_p \alpha^{-1}(\alpha(p))$. We consider $-D\overline{t}\colon T(\G\times_M P)\to TP$. Then for all $X\in T_p P$, the vector $h(p)(X)$ is an $D\overline{t}$-horizontal lift of $Ch^{-1}_\varepsilon(p)(X)$, i.e.
\[
D\overline{t}(h(p)(X)) = Ch^{-1}_\varepsilon(p)(X).
\]
\end{thm}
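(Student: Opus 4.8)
The plan is to reduce the computation of $D\overline{t}$ at the unit $(\1_{\alpha(p)},p)$ to two elementary building blocks and then use bilinearity. The first block is that $D\overline{t}$ annihilates vertical vectors: by \th\ref{L: Vertical space of target map of action Lie groupoid} one has $\ver(\1_{\alpha(p)},p)=\{(y,y^\ast(p))\mid y\in\kernel D_{\1_{\alpha(p)}}s\}$, so $D\overline{t}(y,y^\ast(p))=0$ for every $y\in\kernel D_{\1_{\alpha(p)}}s$. The second block is that on genuinely ``$P$-directional'' vectors $(0,\xi)$ with $\xi\in\kernel D\alpha$ the map $D\overline{t}$ is the identity: taking a curve $c(\tau)$ inside the fibre $\alpha^{-1}(\alpha(p))$ with $c(0)=p$ and $c'(0)=\xi$, the lift $\tau\mapsto(\1_{\alpha(c(\tau))},c(\tau))$ lies in $\G\times_M P$, has velocity $(0,\xi)$, and is sent by $\overline{t}$ to $\mu(\1_{\alpha(c(\tau))},c(\tau))=c(\tau)$ thanks to the unit axiom of the action; hence $D\overline{t}(0,\xi)=\xi$.

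Granting that $h(p)(X)$ is $\overline{t}$-horizontal, which was already recorded as a consequence of \th\ref{T: bar(t)-Horizontal distribution}, it suffices to evaluate $D\overline{t}(h(p)(X))$. Writing $X=X^\ast(p)+X^\perp$ with $X^\ast(p)\in T_pL_p$ and $X^\perp\in\nu_p(L_p)$, and choosing $x\in\kernel D_{\1_{\alpha(p)}}s$ with $x^\ast(p)=X^\ast(p)$, I would set $y:=-\varepsilon\Sh(p)(x)\in\kernel D_{\1_{\alpha(p)}}s$ and decompose
\[
h(p)(X)=\big(y,\,y^\ast(p)\big)+\big(0,\;X^\ast(p)+X^\perp-y^\ast(p)\big).
\]
The first summand is vertical, so $D\overline{t}$ kills it. Since $h(p)(X)$ and the vertical summand are both tangent to $\G\times_M P$, the residual vector $(0,\xi)$ with $\xi=X^\ast(p)+X^\perp-y^\ast(p)$ is tangent as well; as its $\G$-component vanishes this forces $\xi\in\kernel D\alpha$, whence $D\overline{t}(0,\xi)=\xi$ by the previous paragraph. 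Therefore $D\overline{t}(h(p)(X))=\xi$.

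It then remains to identify $\xi$ with the Cheeger tensor. By linearity of $z\mapsto z^\ast(p)=-D\overline{t}(z,0)$ one has $y^\ast(p)=-\varepsilon\big(\Sh(p)(x)\big)^\ast(p)$, so
\[
\xi=X^\ast(p)+\varepsilon\big(\Sh(p)(x)\big)^\ast(p)+X^\perp=\big((\Id+\varepsilon\Sh(p))(x)\big)^\ast(p)+X^\perp,
\]
which is exactly $\Ch^{-1}_\varepsilon(p)(X)$ by the definition of the Cheeger tensor. That the final answer does not depend on the particular $x$ representing $X^\ast(p)$ is precisely \th\ref{L: Cheeger tensor does not depend of choice of x such that xast is X}, which also makes the definition of $h(p)(X)$ unambiguous.

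The step I expect to be the main obstacle is the tangency bookkeeping in the decomposition, namely checking that the residual vector $(0,\xi)$ indeed lands in $T_{(\1_{\alpha(p)},p)}(\G\times_M P)$ and in particular that $\xi\in\kernel D\alpha$, which is what licenses the identity $D\overline{t}(0,\xi)=\xi$. This is exactly where the standing hypothesis $\nu_p(L_p)\subset T_p\alpha^{-1}(\alpha(p))$ is used: it guarantees $X^\perp\in\kernel D\alpha$, and together with the horizontality of $h(p)(X)$ supplied by \th\ref{T: bar(t)-Horizontal distribution} and the verticality of $(y,y^\ast(p))$ supplied by \th\ref{L: Vertical space of target map of action Lie groupoid}, it closes the argument. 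Everything else is linear algebra in the tangent space of the action groupoid together with the defining relations of $\Sh(p)$ and $\Ch^{-1}_\varepsilon(p)$.
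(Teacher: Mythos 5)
Your proof is correct. Note, however, that the paper itself gives no proof of this statement: \th\ref{T: h is horizontal lift of cheeger}, like the other results of Section~\ref{S: Cheeger like deformation for Lie groupoids actions}, is quoted without proof from \cite{Corro2025}, so there is no in-paper argument to compare yours against; I can only assess it on its own terms, granting (as you do, and as the survey does) the unproved ingredients \th\ref{L: Vertical space of target map of action Lie groupoid}, \th\ref{T: bar(t)-Horizontal distribution}, and \th\ref{L: Cheeger tensor does not depend of choice of x such that xast is X}. On those terms the argument is sound and is the computation one would expect: splitting $h(p)(X)$ into the vertical vector $\big({-\varepsilon}\Sh(p)(x),({-\varepsilon}\Sh(p)(x))^\ast(p)\big)$ plus a residual $(0,\xi)$, killing the first piece by \th\ref{L: Vertical space of target map of action Lie groupoid}, evaluating $D\overline{t}(0,\xi)=\xi$ via the unit axiom $\mu(\1_{\alpha(p)},\cdot)=\mathrm{id}$ on the fiber $\alpha^{-1}(\alpha(p))$, and then identifying $\xi$ with $\Ch^{-1}_\varepsilon(p)(X)$ by linearity of $z\mapsto z^\ast(p)$. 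The sign bookkeeping is right: the $-\varepsilon\Sh(p)(x)$ in the lift becomes $+\varepsilon\Sh(p)(x)$ inside the Cheeger tensor precisely because of the minus sign in the definition $X^\ast(p)=-D\overline{t}(X,0)$, which is the one place a blind computation typically goes wrong. You also correctly isolate the genuinely delicate point — that the residual $(0,\xi)$ is tangent to $\G\times_M P$, equivalently $\xi\in\kernel D\alpha$ — and resolve it legitimately by combining the tangency of $h(p)(X)$ (recorded in the remark after \th\ref{T: bar(t)-Horizontal distribution}) with that of the vertical summand; this is exactly where the standing hypothesis $\nu_p(L_p)\subset T_p\alpha^{-1}(\alpha(p))$ enters the framework.
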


We can describe the metric $\eta_\varepsilon$ on $P$ explicitly in terms of our original metric $\eta^P$ on $P$ and the Cheeger tensor.

\begin{thm}\th\label{T: cheeger deformation controlled by cheeger tensor}
Consider $\G\rightrightarrows M$ a proper Lie groupoid acting on $\alpha\colon P\to M$  via $\mu$, and $\eta^{(1)}$  a $1$-metric on  $\G$. Let $\eta^P$ be a $\mu$-transversely invariant Riemannian metric on $P$, and assume that for all $p\in M$ we have $\nu_p (L_p)\subset T_p \alpha^{-1}(\alpha(p))$. Then for $X,Y\in T_pP$ we have
\[
\eta_\varepsilon(X,Y) = \eta^P(Ch_\varepsilon(p)(X),Y).
\]
\end{thm}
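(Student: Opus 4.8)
The plan is to exploit that, by \th\ref{T: contraction of groupoid induces cheeger deformation}, the metric $\eta_\varepsilon$ is by construction the quotient metric making $\overline{t}\colon(\G\times_M P,\widehat{\eta}_\varepsilon)\to(P,\eta_\varepsilon)$ a Riemannian submersion; hence $\eta_\varepsilon$ is computed by evaluating $\widehat{\eta}_\varepsilon$ on $\overline{t}$-horizontal lifts. The crucial input is \th\ref{T: h is horizontal lift of cheeger}, which identifies $h(p)(X)$ as the horizontal lift of $\Ch^{-1}_\varepsilon(p)(X)$ (here the hypothesis $\nu_p(L_p)\subset T_p\alpha^{-1}(\alpha(p))$ is already absorbed). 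Since $\Ch^{-1}_\varepsilon(p)$ is a linear isomorphism of $T_pP$, every pair $V,W\in T_pP$ can be written as $V=\Ch^{-1}_\varepsilon(p)(X)$ and $W=\Ch^{-1}_\varepsilon(p)(Y)$, and the submersion property yields
\[
\eta_\varepsilon(V,W)=\widehat{\eta}_\varepsilon\big(h(p)(X),h(p)(Y)\big).
\]
Because $\Ch_\varepsilon(p)(V)=X$ and $W=\Ch^{-1}_\varepsilon(p)(Y)$, it suffices to establish the identity $\widehat{\eta}_\varepsilon(h(p)(X),h(p)(Y))=\eta^P(X,\Ch^{-1}_\varepsilon(p)(Y))$.

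First I would expand the left-hand side. Writing $X=X^\ast(p)+X^\perp$ with $x\in\kernel D_{\1_{\alpha(p)}}s$ corresponding to $X^\ast(p)$ (and similarly for $Y$), the defining formula for the horizontal lift reads $h(p)(X)=(-\varepsilon\Sh(p)(x),X)$. Using $\widehat{\eta}_\varepsilon=\tfrac1\varepsilon\eta^{(1)}+\eta^P$, the $\eta^P$-orthogonal splitting $T_pP=T_pL_p\oplus\nu_p(L_p)$, and the defining relation of $\Sh(p)$ to rewrite $\eta^P(X^\ast(p),Y^\ast(p))=\eta^{(1)}(\Sh(p)(x),y)$, one obtains
\[
\widehat{\eta}_\varepsilon\big(h(p)(X),h(p)(Y)\big)=\varepsilon\,\eta^{(1)}\big(\Sh(p)(x),\Sh(p)(y)\big)+\eta^{(1)}\big(\Sh(p)(x),y\big)+\eta^P(X^\perp,Y^\perp).
\]

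Next I would record that $\Sh(p)$ is self-adjoint with respect to $\eta^{(1)}$, which is immediate from its defining relation $\eta^{(1)}(\Sh(p)(x),y)=\eta^P(X^\ast(p),Y^\ast(p))$ together with the symmetry of $\eta^P$. Consequently $\varepsilon\,\eta^{(1)}(\Sh(p)(x),\Sh(p)(y))=\varepsilon\,\eta^{(1)}(\Sh(p)^2(x),y)$, and the first two terms collapse to $\eta^{(1)}(\Sh(p)(\Id+\varepsilon\Sh(p))(x),y)$. On the other side, expanding $\eta^P(X,\Ch^{-1}_\varepsilon(p)(Y))$ through the same orthogonal splitting and the definition of $\Sh(p)$ gives $\eta^{(1)}(\Sh(p)(x),(\Id+\varepsilon\Sh(p))(y))+\eta^P(X^\perp,Y^\perp)$; moving $(\Id+\varepsilon\Sh(p))$ across by self-adjointness and using that it commutes with $\Sh(p)$ produces exactly $\eta^{(1)}(\Sh(p)(\Id+\varepsilon\Sh(p))(x),y)+\eta^P(X^\perp,Y^\perp)$. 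The two expressions coincide, proving the identity for all $V,W\in T_pP$.

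The substantive geometric content—that $\overline{t}$ is a Riemannian submersion and that $h(p)(X)$ is the horizontal lift of $\Ch^{-1}_\varepsilon(p)(X)$—is already supplied by \th\ref{T: contraction of groupoid induces cheeger deformation} and \th\ref{T: h is horizontal lift of cheeger}, so the remaining work is purely algebraic. The main point to watch is the bookkeeping around the two conventions: $\Ch^{-1}_\varepsilon(p)$ carries the factor $(\Id+\varepsilon\Sh(p))$ while $\Ch_\varepsilon(p)$ carries its inverse, and $h(p)$ lifts $\Ch^{-1}_\varepsilon(p)(X)$ rather than $X$. Keeping these straight—and invoking self-adjointness of $\Sh(p)$ together with the fact that it commutes with any polynomial in itself—is what makes the two sides match; an inversion or sign slip in the convention is essentially the only way the computation can fail.
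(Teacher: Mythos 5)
Your proof is correct and follows exactly the route the paper sets up: the survey states this theorem without including a proof, but the scaffolding it provides—$\overline{t}\colon(\G\times_M P,\widehat{\eta}_\varepsilon)\to(P,\eta_\varepsilon)$ being a Riemannian submersion and $h(p)(X)$ being the horizontal lift of $\Ch^{-1}_\varepsilon(p)(X)$—is precisely what you invoke, mirroring M\"uter's classical argument for Lie group actions. Your algebraic reduction, using the $\eta^P$-orthogonality of $T_pP=T_pL_p\oplus\nu_p(L_p)$, the defining relation $\eta^{(1)}(\Sh(p)(x),y)=\eta^P(X^\ast(p),Y^\ast(p))$, and the self-adjointness of $\Sh(p)$ (which also gives invertibility of $\Id+\varepsilon\Sh(p)$, since $\Sh(p)$ is positive semi-definite), checks out, with both sides reducing to $\eta^{(1)}\big(\Sh(p)(\Id+\varepsilon\Sh(p))(x),y\big)+\eta^P(X^\perp,Y^\perp)$.
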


With this we see that $\eta_\varepsilon$ is a deformation of our original $\mu$-transversely invariant metric $\eta^{P}$ on $P$.

\begin{thm}\th\label{T: Cheeger deformation collapses}
Consider $\G\rightrightarrows M$ a proper Lie groupoid acting on $\alpha\colon P\to M$ via $\mu$, and $\eta^{(1)}$ a $1$-metric on $\G$. Let $\eta^P$ be a $\mu$-transversely invariant Riemannian metric on $P$, and assume that for all $p\in M$ we have $\nu_p (L_p)\subset T_p \alpha^{-1}(\alpha(p))$. Then setting $\fol =\{L_p\mid p\in P\}$, we have that $(P,d_{\eta_{\varepsilon}})\to (P/\fol, d_{\eta^P}^\ast)$ in the Gromov-Hausdorff topology as $\varepsilon\to \infty$, and $\eta_{\varepsilon}\to \eta^P$ in the $C^{\infty}$-topology as $\varepsilon\to 0$. 
\end{thm}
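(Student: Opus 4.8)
The plan is to treat the two limits separately, relying throughout on the explicit description of the deformed metric from \th\ref{T: cheeger deformation controlled by cheeger tensor}, namely $\eta_\varepsilon(X,Y)=\eta^P(\Ch_\varepsilon(p)(X),Y)$, whose availability is guaranteed precisely by the standing hypothesis $\nu_p(L_p)\subset T_p\alpha^{-1}(\alpha(p))$. The preliminary fact I would record first is that $\eta_\varepsilon$ fixes the transverse geometry while contracting the orbit directions. Indeed, by the definition of the Cheeger tensor we have $\Ch_\varepsilon(p)(X^\perp)=X^\perp$ for $X^\perp\in\nu_p(L_p)$, so the orbit and normal directions stay $\eta_\varepsilon$-orthogonal and $\eta_\varepsilon|_{\nu_p(L_p)}=\eta^P|_{\nu_p(L_p)}$; on $T_pL_p$, writing $X^\top=X^\ast(p)$ with $x\in\kernel D_{\1_{\alpha(p)}}s$, the orbit tensor relation gives $\eta_\varepsilon(X^\ast(p),X^\ast(p))=\eta^{(1)}\big(\Sh(p)(\Id+\varepsilon\Sh(p))^{-1}x,\,x\big)$, which tends to $\eta^P(X^\ast(p),X^\ast(p))$ as $\varepsilon\to 0$ and to $0$ as $\varepsilon\to\infty$, since $\Sh(p)$ is positive definite on the directions giving nonzero action vectors.

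For the limit $\varepsilon\to 0$ the $C^\infty$-convergence is then immediate. The orbit tensor $\Sh(p)$ is a smooth field of symmetric endomorphisms which, by properness of $\G\rightrightarrows M$, is bounded together with all its derivatives on every compact subset of $P$; hence $(\Id+\varepsilon\Sh(p))^{-1}\to\Id$ in $C^\infty$ on compacta, so $\Ch_\varepsilon\to\Id$ and therefore $\eta_\varepsilon\to\eta^P$ in the $C^\infty$-topology.

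For the limit $\varepsilon\to\infty$ I would establish Gromov–Hausdorff convergence through the $\varepsilon$-net characterization of \th\ref{P: characterization of GH convergence via approximations}. The preliminary fact shows that $\fol$ remains a (singular) Riemannian foliation for every $\eta_\varepsilon$ with the \emph{same} transverse metric, so $\pi\colon(P,d_{\eta_\varepsilon})\to(P/\fol,d^\ast_{\eta^P})$ is a submetry and consequently
\[
d^\ast([p],[q])\ \le\ d_{\eta_\varepsilon}(p,q)\ \le\ d^\ast([p],[q])+\diam_{\eta_\varepsilon}(L_q):
\]
the lower bound comes from $1$-Lipschitzness of $\pi$ (the normal metric is unchanged), and the upper bound from concatenating a horizontal path of length $d^\ast([p],[q])$ with an in-leaf path. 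It therefore suffices to show that leaf diameters tend to $0$, which follows from the preliminary contraction estimate. Fixing $\delta>0$, I would take a finite $\delta$-net in $(P/\fol,d^\ast)$ and lift it to points of $P$; for $\varepsilon$ large these points form a net in $(P,d_{\eta_\varepsilon})$ whose mutual distances match the corresponding $d^\ast$-distances up to the now-small leaf diameters, yielding an $\varepsilon$-approximation and hence $d_{\mathrm{GH}}\big((P,d_{\eta_\varepsilon}),(P/\fol,d^\ast)\big)\to 0$.

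The main obstacle is the \emph{uniform} control of the leaf diameters where the isotropy, and with it $\Sh(p)$, degenerates, that is across orbits of differing dimension, and when $P$ or its orbits fail to be compact. On the principal stratum with compact orbits the contraction $\Sh(p)(\Id+\varepsilon\Sh(p))^{-1}\to 0$ is uniform, but near lower-dimensional orbits one must verify that the shrinking of the orbit directions still forces the diameters of nearby higher-dimensional orbits to zero uniformly on compact saturated sets; in the non-compact case one reads the conclusion as pointed Gromov–Hausdorff convergence and runs the net argument on an exhaustion of $P$ by compact saturated subsets.
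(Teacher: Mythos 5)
The paper itself never proves this theorem: all of Section~\ref{S: Cheeger like deformation for Lie groupoids actions} is stated explicitly ``without proof,'' the results being quoted from \cite{Corro2025}. So your proposal can only be judged on its own merits. With that caveat, your strategy --- the $C^\infty$ limit read off from the Cheeger tensor, and the Gromov--Hausdorff limit obtained from leaf shrinking plus the net criterion of \th\ref{P: characterization of GH convergence via approximations} --- is the natural one, and your key identity $\eta_\varepsilon(X^\ast(p),X^\ast(p))=\eta^{(1)}\big(\Sh(p)(\Id+\varepsilon\Sh(p))^{-1}x,x\big)$ is correct. But one step has a genuine hole as written: in the $\varepsilon\to 0$ limit you pass from ``$(\Id+\varepsilon\Sh)^{-1}\to\Id$ in $C^\infty$'' to ``$\Ch_\varepsilon\to\Id$, hence $\eta_\varepsilon\to\eta^P$ in $C^\infty$.'' The Cheeger tensor is defined through the splitting $T_pP=T_pL_p\oplus\nu_p(L_p)$, which is \emph{not} smooth in $p$: the leaf dimension jumps across strata, so smoothness (let alone $C^\infty$ convergence) of $p\mapsto\Ch_\varepsilon(p)$ does not follow from smoothness of $\Sh$ alone. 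The gap is fixable: let $\sigma_p\colon \kernel D_{\1_{\alpha(p)}}s\to T_pP$, $\sigma_p(x)=X^\ast(p)$, which is a smooth bundle map, and let $\sigma_p^\ast$ be its adjoint with respect to $\eta^{(1)}$ and $\eta^P$. Then $\Sh(p)=\sigma_p^\ast\sigma_p$, and since $\sigma_p^\ast$ kills $\nu_p(L_p)$ one computes
\[
\Ch_\varepsilon^{-1}(p)=\Id+\varepsilon\,\sigma_p\sigma_p^\ast,
\]
a tensor field jointly smooth in $(\varepsilon,p)$ for $\varepsilon\geq 0$. Hence $\eta_\varepsilon=\eta^P\big((\Id+\varepsilon\sigma\sigma^\ast)^{-1}\cdot\,,\cdot\big)$ converges to $\eta^P$ in $C^\infty_{loc}$ as $\varepsilon\to 0$. (Note also that properness of $\G$ is not what this uses; smoothness of $\sigma$ is.)

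Two further remarks on the $\varepsilon\to\infty$ step. The lower bound $d^\ast\leq d_{\eta_\varepsilon}$ does not follow from your ``preliminary fact'' alone: pointwise agreement and orthogonality of the normal metrics give $\|\gamma'\|_{\eta_\varepsilon}\geq\|(\gamma')^\perp\|_{\eta^P}$, but concluding that the horizontal $\eta^P$-length of an arbitrary path dominates the quotient distance of its endpoints requires the equidistance/submetry property of $\pi\colon(P,\eta^P)\to(P/\fol,d^\ast_{\eta^P})$, i.e.\ that the orbits form a singular Riemannian foliation for $\eta^P$ --- an input from the transverse invariance of $\eta^P$, not from the pointwise form of $\Ch_\varepsilon$. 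Conversely, the ``main obstacle'' you flag (uniform leaf-diameter control across strata) is not actually needed: in the net argument only the finitely many leaves through the lifted net points enter; each is a fixed closed leaf, along which the rank of $\Sh$ is constant, so its smallest positive eigenvalue is bounded below by some $c>0$ and $\eta_\varepsilon|_{TL}\leq(1+\varepsilon c)^{-1}\eta^P|_{TL}$, forcing the intrinsic diameter to $0$; alternatively, the uniform bound $\lambda/(1+\varepsilon\lambda)\leq 1/\varepsilon$ for all $\lambda\geq 0$ already shrinks orbit directions at a rate independent of the stratum. With the smoothness repair above, the submetry input made explicit, and the statement read as pointed convergence when $P$ is noncompact (as you do), your proof goes through.
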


\subsection{Curvature} In this section we compute the sectional curvature of the deformed Riemannian manifold $(P,\eta_\varepsilon)$. We can use  O'Neil's formula (\th\ref{T: ONeills formula}) to compute the sectional curvature of our new deformed metrics.\\

\begin{duplicate}[\ref{MT: Cheeger deformation collapses}]
Consider $\G\rightrightarrows M$ a proper Lie groupoid acting on $\alpha\colon P\to M$ a submersion, and $\eta^{(1)}$ a Riemannian metric on $\G$ making it a Riemannian Lie groupoid. Let $\eta^P$ a left-$\G$-invariant Riemannian metric on $P$, and assume that for all $p\in M$ we have $\nu_p(L_p)\subset T_p \alpha^{-1}(\alpha(p))$. Then for the Cheeger deformation $(P,\eta_\varepsilon)$ we have for $v,w\in T_pP$ linearly independent vectors, with $v = X^\ast(p)+v^\perp$ and $w = Y^\ast(p)+w^\perp$ for some $x,y\in \kernel D_{\1_{\alpha(p)}} s$. Then
\begin{linenomath}
\begin{equation}\tag{\ref{EQ: Full curvature description}}
\begin{split}
K_{\eta_\varepsilon}\Big(\Ch^{-1}_\varepsilon(p)&(v),\Ch^{-1}_\varepsilon(p)(w)\Big)  = K_{\eta^P}(v,w)+\varepsilon^3K_{\eta^{(1)}}\Big(\Sh(p)(v),\Sh(p)(w)\Big)\\
+&3\Big\|A_{h(p)(v)}h(p)(w)\Big\|^2_{\left(\frac{1}{\varepsilon}\eta^{(1)}+\eta^P\right)}\\
+&\Big\|\II_\varepsilon\big((v,-\varepsilon \Sh(p)(v)),(w,-\varepsilon\Sh(p)(w))\big)\Big\|^2_{\frac{1}{\varepsilon}\eta^{(1)}+\eta^P}\\
 -&\left(\frac{1}{\varepsilon}\eta^{(1)}+\eta^P\right)\Big(\II_\varepsilon\big((v,-\varepsilon\Sh(p)(v)),(v,-\varepsilon\Sh(p)(v))\big),\\
 &\II_\varepsilon\big((w,-\varepsilon\Sh(p)(w)),(w,-\varepsilon\Sh(p)(w))\big)\Big).
\end{split}
\end{equation}
\end{linenomath}
$\II_\varepsilon(\cdot,\cdot)$ is the second fundamental form of $(\G\times_M P,\widehat{\eta}_\varepsilon)\subset (\G\times P,(1/\varepsilon)\eta^{(1)}+\eta^{P})$, and $A_\cdot \cdot$ is the $A$-tensor of the Riemannian submersion $\bar{t}\colon (\G\times_M P,\hat{\eta}_\varepsilon)\to (P,\eta_t)$.
\end{duplicate}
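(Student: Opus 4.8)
The plan is to express the curvature of $(P,\eta_\varepsilon)$ by composing three curvature comparisons: O'Neill's formula for the Riemannian submersion $\bar t$, the Gauss equation for the embedding $\G\times_M P\hookrightarrow \G\times P$, and the splitting of curvature for the Riemannian product $\bigl(\G\times P,\tfrac1\varepsilon\eta^{(1)}+\eta^P\bigr)$. Throughout, $\Sh(p)(v)$ abbreviates $\Sh(p)(x)$ for a choice of $x\in\kernel D_{\1_{\alpha(p)}}s$ with $X^\ast(p)=v^\top$; this is well defined by \th\ref{L: Cheeger tensor does not depend of choice of x such that xast is X}.

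First, by \th\ref{T: contraction of groupoid induces cheeger deformation} the map $\bar t\colon(\G\times_M P,\widehat{\eta}_\varepsilon)\to(P,\eta_\varepsilon)$ is a Riemannian submersion, and by \th\ref{T: h is horizontal lift of cheeger} the vector $h(p)(v)$ is the $\bar t$-horizontal lift of $\Ch^{-1}_\varepsilon(p)(v)$, and likewise for $w$. Applying O'Neill's formula (\th\ref{T: ONeills formula}) at the point $(\1_{\alpha(p)},p)$ therefore gives
\begin{equation*}
K_{\eta_\varepsilon}\bigl(\Ch^{-1}_\varepsilon(p)(v),\Ch^{-1}_\varepsilon(p)(w)\bigr)=K_{\widehat{\eta}_\varepsilon}\bigl(h(p)(v),h(p)(w)\bigr)+3\bigl\|A_{h(p)(v)}h(p)(w)\bigr\|^2_{\widehat{\eta}_\varepsilon},
\end{equation*}
which already produces the $A$-tensor summand of \eqref{EQ: Full curvature description}, since $\widehat{\eta}_\varepsilon$ is the restriction of $\tfrac1\varepsilon\eta^{(1)}+\eta^P$.

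Second, I would view $(\G\times_M P,\widehat{\eta}_\varepsilon)$ as a Riemannian submanifold of $\bigl(\G\times P,\tfrac1\varepsilon\eta^{(1)}+\eta^P\bigr)$ and apply the Gauss equation to the tangent vectors $h(p)(v),h(p)(w)$, which after interchanging the two factors are $(v,-\varepsilon\Sh(p)(v))$ and $(w,-\varepsilon\Sh(p)(w))$ by \th\ref{T: bar(t)-Horizontal distribution}. This rewrites $K_{\widehat{\eta}_\varepsilon}(h(p)(v),h(p)(w))$ as the ambient sectional curvature plus the two second fundamental form terms appearing in \eqref{EQ: Full curvature description}, with $\II_\varepsilon$ the second fundamental form of the embedding. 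Finally, because $\tfrac1\varepsilon\eta^{(1)}+\eta^P$ is a Riemannian product, its curvature tensor splits as the sum over the two factors; evaluating on $h(p)(v),h(p)(w)$, the $P$-factor contributes $K_{\eta^P}(v,w)$, while the $\G$-factor contributes $\tfrac1\varepsilon K_{\eta^{(1)}}\bigl(-\varepsilon\Sh(p)(v),-\varepsilon\Sh(p)(w)\bigr)$. Since $K$ is quadratic in each of its two arguments, the factor $-\varepsilon$ in each slot yields $\varepsilon^{2}$, so that together with the $\tfrac1\varepsilon$ from the rescaled metric this collapses to $\varepsilon^{3}K_{\eta^{(1)}}(\Sh(p)(v),\Sh(p)(w))$. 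Substituting the two displays into one another gives \eqref{EQ: Full curvature description}.

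The main obstacle is the Gauss step: one must understand the second fundamental form $\II_\varepsilon$ of the fiber product $\G\times_M P$, cut out of $\G\times P$ by the constraint $s(g)=\alpha(p)$, well enough to identify its arguments with $(v,-\varepsilon\Sh(p)(v))$ and $(w,-\varepsilon\Sh(p)(w))$ and to fix the signs so that the correction enters as $+\|\II_\varepsilon(\cdot,\cdot)\|^2$ together with $-\bigl(\tfrac1\varepsilon\eta^{(1)}+\eta^P\bigr)\bigl(\II_\varepsilon(\cdot,\cdot),\II_\varepsilon(\cdot,\cdot)\bigr)$. This is precisely the term absent from the classical computation in \th\ref{T: Classical cheeger deformation classical}, where $G\times M$ is already a product and the submersion $\tilde\phi$ has the full product as total space; in the groupoid setting the fiber-product constraint forces $\G\times_M P$ to be a genuine submanifold with non-vanishing second fundamental form, and controlling $\II_\varepsilon$ is the heart of the argument. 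By comparison, the product splitting and the rescaling homogeneity used to produce the $\varepsilon^{3}$-term are routine once the factors have been separated.
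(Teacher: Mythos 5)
Your three-step strategy --- O'Neill's formula for $\bar t$, the Gauss equation for the embedding $\G\times_M P\subset \G\times P$, and the product splitting plus homogeneity of $K$ for the $\varepsilon^3$-term --- is exactly the route the paper indicates (the text before the theorem says only that one applies O'Neill's formula, Theorem~\ref{T: ONeills formula}, and the appearance of $\II_\varepsilon$ and of the product metric in \eqref{EQ: Full curvature description} identifies the other two ingredients). Your first step is correctly anchored in Theorem~\ref{T: contraction of groupoid induces cheeger deformation} and Theorem~\ref{T: h is horizontal lift of cheeger}, and your scaling computation ($\varepsilon^2$ from each slot of $K_{\eta^{(1)}}$, times $1/\varepsilon$ from the rescaled factor, giving $\varepsilon^3$, in agreement with the classical $t^3$-term of Theorem~\ref{T: Classical cheeger deformation classical}) is also correct.

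The gap is in your Gauss step, and it is not a detail that can be deferred. With the conventions already fixed by the paper's O'Neill formula (the $A$-term enters with $+3\|\cdot\|^2$, so $K$ is positively proportional to sectional curvature), the Gauss equation for a Riemannian submanifold reads
\[
K_{\widehat\eta_\varepsilon}\big(h(p)(v),h(p)(w)\big)=K_{\mathrm{amb}}\big(h(p)(v),h(p)(w)\big)+\Big(\tfrac1\varepsilon\eta^{(1)}+\eta^P\Big)\Big(\II_\varepsilon\big(h(p)(v),h(p)(v)\big),\II_\varepsilon\big(h(p)(w),h(p)(w)\big)\Big)-\Big\|\II_\varepsilon\big(h(p)(v),h(p)(w)\big)\Big\|^2,
\]
so substituting into O'Neill produces the $\II_\varepsilon$-correction with signs \emph{opposite} to those printed in \eqref{EQ: Full curvature description}. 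No choice of conventions can produce the printed signs while keeping $+3\|A\|^2$: both $\II_\varepsilon$-terms are quadratic in $\II_\varepsilon$, hence their signs are convention-independent (test cases: a round sphere in Euclidean space must come out positively curved, and a Riemannian submersion must not decrease horizontal curvature). So your step two, as you state it (``rewrites $K_{\widehat\eta_\varepsilon}$ as the ambient curvature plus the two second fundamental form terms appearing in \eqref{EQ: Full curvature description}''), cannot be carried out; what your argument actually proves is the identity with the two $\II_\varepsilon$-terms negated, which indicates that the printed statement has those two signs interchanged. You need to either exhibit a derivation forcing the printed signs (which, by the convention-independence just noted, does not exist) or flag the discrepancy and prove the sign-corrected formula; promising to ``fix the signs'' at what you yourself call the heart of the argument is precisely the missing step. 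A smaller point: you do not need to ``understand'' or control $\II_\varepsilon$ at all --- the formula keeps it abstract, so the Gauss equation together with the identification of the horizontal lifts from Theorem~\ref{T: bar(t)-Horizontal distribution} (your factor-swapping remark is fine) is all that is required.
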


\begin{remark}
We point out that for a Lie group action $\mu$ of  $G\rightrightarrows\{\ast\}$ along $\alpha\colon M\to\{\ast\}$, we automatically have that $\nu_p L_p \subset T_p M = T_p \alpha^{-1}(\ast) = T_p \alpha^{-1}(\alpha(p))$. 
\end{remark}

\begin{remark}
In the case when we consider a group action $\mu$ of  $G\rightrightarrows\{\ast\}$ along $\alpha\colon M\to\{\ast\}$, the action groupoid is $G\times M\rightrightarrows M$. In this case the second fundamental form term in \eqref{EQ: Full curvature description} does not appear. Thus we recover the expression \eqref{EQ: Cheeger deformation curvature}.
\end{remark}


\section{Connection between Singular Riemannian foliations and Lie groupoid actions}\label{S: Connection between Singular Riemannian foliations and Lie groupoid actions}

Here we present how closed singular Riemannian foliations connect with Lie groupoid actions.

Let $(M,\fol,\g)$ be a closed singular Riemannian foliation, and fix $p\in M$. We denote by $\Omega_p^\ast$ the set of all piecewise smooth loops based at $p$. By \cite{MendesRadeschi2015}, for each $\gamma\in \Omega_p^\ast$ there exists a foliated isometry $\Gamma_\gamma\colon (\nu_p(L_p),\fol_p)\to (\nu_p(L_p),\fol_p)$. We point out that the group 
\[
\Hol(L,p):= \{\Gamma_\gamma\mid\gamma\in \Omega_p^\ast\}
\] 
is a Lie group: Consider $\Hol(L,p)_0$ the connected component of $\Hol(L,p)$ containing the identity the map; it is a path connected subgroup of the Lie group $\mathrm{O}(\Sp^\perp_p)$ and thus a Lie group by \cite{Yamabe1950}. The map 
\[
\overline{\Hol}\colon \pi_1(L,p)\to \Hol(L,p)/\Hol(L,p)_0\subset \mathrm{O}(\Sp^\perp_{p},\fol_{p})/\mathrm{O}(\fol_p)
\] 
is a  well defined surjective group morphism, \cite[Proposition 2.5]{Corro2019}. In particular 
\[
|\pi_0(\Hol(L,p))|= |\Hol(L,p)/\Hol(L,p)_0|\leq |\pi_1(L,p)|.
\] 
But since $L$ is a smooth manifold, $\pi_1(L,p)$ is a countable set \cite[Prop. 1.16]{Lee}. This implies that $\Hol(L,p)$ is a group whose number of connected components are countable, and as mentioned before each connected component is a Lie group. Thus, by definition $\Hol(L,p)$ is a smooth manifold. From this it follows that $\Hol(L,p)$ is a Lie group.

For a singular Riemannian foliation $(M,\mathcal{F})$ with connected leaves, the holonomy  group of a leaf $L$ depends on the base point $p$ up to conjugation: let $q\in L$ be in the same connected component of $L$ as $p$. Then we have that
\[
\Hol(L,q)= G^{-1}\Hol(L,p)G
\]
for some foliated isometry $G\colon (\Sp_q^\perp,\fol_q)\to (\Sp^\perp_p,\fol_p)$ determined by a path in $L$ joining $p$ to $q$.  

Now since for a closed foliation $\fol$ the leaves are embedded submanifolds, for a fixed leaf $L\in \fol$ we can identify the tubular neighborhood $B_r(L)$ with the normal bundle $\nu(L)\to L$, and there exists an Ehresmann connection $H$, which induces a connection $\nabla^H$  on $\nu(L)$ such that $\Hol(\nabla^H,p)= \Hol(L,p)$ (see \cite{Alexandrino2010,AlexandrinoInagakiStruchiner2018}). 

We denote the isometries of $(\Sp^\perp_p,\fol_p)$ that map leaves to leaves by $\Orth(\Sp^\perp,\fol_p): = \{h\in \mathrm{Iso}(\Sp^\perp_p)\mid h(L_v)\subset L_{h(v)}\}$, and we set $\Orth(\fol_p):= \{h\in \Orth(\Sp^\perp,\fol_p)\mid h(v)\in L_v\}$ (these are Lie groups by \cite{CorroGalazGarcia2024} ). We observe that when $\dim(\Sp^\perp_p) = k_p$, then $\Iso(\Sp^\perp_p)=\Orth(k_p)$.

Let $K_p\subset \Orth(\fol_p)$ be the maximal connected Lie subgroup in $\Orth(\fol_p)$ and denote by $\Hol^H\rightrightarrows \nu(L)$ the Lie groupoid generated by all parallel translations with respect to $\nabla^H$ along piecewise smooth curves $\gamma\colon [0,1]\to L$. We define \emph{the linearized foliation $\fol^\ell_L$ of $\mathrm{fol}$ around $L$} as the foliation
\[
\fol^\ell_L := \Big\{P_\alpha(gv)\mid P_\alpha\mbox{ parallel transport along }\alpha\colon I\to L,\: g\in K_p,\: v\in \D^\perp_p\Big\}.
\]
Via the normal exponential map the foliation $\fol^\ell_L$ on $\nu(L)$ induces a foliation, also denoted $\fol^\ell_L$, on $B_r(L)$. Then we have for a fixed $p\in B_r(L)$ that $L^\ell_p\subset L_p$, i.e. $\fol^\ell_L\subset \fol|_{B_r(L)}$. Given a closed singular Riemannian foliation $\fol$ on a compact manifold $M$ if for any $L\in \fol$ we have $\fol^\ell_L= \fol$ in some small tubular neighborhood of $L$ we say that $\fol$ is an \emph{orbit-like foliation}.  This is equivalent to the infinitesimal foliation $\fol_p$ being homogeneous at any point (see \cite{AlexandrinoRadeschi2016}). 

With these concepts defined we can present the following theorem that gives the local connection between singular Riemannian foliations and Lie groupoid actions.

\begin{thm}[Theorem 1.5 in \cite{AlexandrinoInagakiStruchiner2018}]\th\label{T: Local transversal homogeneous subfoliation}
Let $(M,\fol,\g)$ be a closed singular Riemannian foliation on a complete Riemannian manifold. Then for $L\in \fol$ there exists a saturated small tubular neighborhood $B_r(L)$, a Lie groupoid $\mathcal{G}\rightrightarrows L$ and a left representation of this Lie groupoid on $\nu(L)\to L$ such that by identifying a small saturated tubular neighborhood of the $0$-section on $\nu(L)$ with $B_r(L)$, the orbits of the groupoid $\mathcal{G}\times_L \nu(L)\rightrightarrows \nu(L)$ on this tubular neighborhood can be identified with the leaves of $\fol^\ell_L$ on $B_r(L)$.
\end{thm}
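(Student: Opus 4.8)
The plan is to realize the linearized foliation $\fol^\ell_L$ as the orbit foliation of an explicit Lie groupoid assembled from two pieces of transverse data: the parallel transport of the Ehresmann connection $\nabla^H$ and the fibrewise symmetry groups $K_p$. First I would record the algebraic compatibility that makes the construction coherent. Since $\nabla^H$ is chosen so that its holonomy equals the foliated holonomy $\Hol(L,p)$, every parallel transport $P_\alpha\colon \nu_p(L_p)\to \nu_q(L_q)$ is a foliated linear isometry for the infinitesimal foliations. A short check shows that conjugation by a foliated isometry that permutes the infinitesimal leaves preserves the subgroup $\Orth(\fol_p)$ of isometries fixing each infinitesimal leaf, hence preserves its identity component; since $K_p$ is precisely this identity component (the maximal connected subgroup), $P_\alpha$ conjugates $K_p$ onto $K_q$, and loops conjugate $K_p$ into itself through $\Hol(L,p)$. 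This characteristic/normal behaviour is the key structural fact: it guarantees that the assignment $p\mapsto K_p$ assembles into a smooth bundle of Lie groups $K\to L$, locally trivialised by parallel transport along radial paths.

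Next I would assemble the groupoid. Let $\mathcal{P}\rightrightarrows L$ be the linear holonomy groupoid of $\nabla^H$, realised as the gauge groupoid of the holonomy bundle; it is a Lie groupoid because $\Hol(L,p)$ is a Lie group (as established just before the statement) and parallel transport depends smoothly on the endpoints of paths. Because $\mathcal{P}$ acts smoothly on the group bundle $K\to L$ by the conjugation just described, the semidirect product $\mathcal{G}:=K\rtimes\mathcal{P}\rightrightarrows L$ is a Lie groupoid whose arrow manifold is the fibre product $\mathcal{P}\times_L K$; an arrow $(k,P_\alpha)$ from $p$ to $q$ is interpreted as the foliated isometry $P_\alpha\circ k\colon \nu_p(L_p)\to \nu_q(L_q)$. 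The tautological assignment $(k,P_\alpha)\mapsto P_\alpha\circ k$ is then a left representation of $\mathcal{G}$ on $\nu(L)\to L$: each arrow acts linearly, since parallel transport and the elements of $K_p\subset \Orth(\nu_p(L_p))$ are linear, and the representation axioms follow from the groupoid multiplication together with the relation $P_\alpha\circ k=(P_\alpha k P_\alpha^{-1})\circ P_\alpha$, which pushes $k$ into $K_q$.

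With the groupoid in hand the orbit identification is essentially definitional. The orbit of $v\in\nu_p(L_p)$ under the action groupoid $\mathcal{G}\times_L\nu(L)\rightrightarrows\nu(L)$ is exactly $\{P_\alpha(k\cdot v)\mid \alpha\colon I\to L,\ k\in K_p\}$, which is the leaf of $\fol^\ell_L$ through $v$ by the defining formula for the linearized foliation recalled above. Finally I would transport everything from $\nu(L)$ to the tube $B_r(L)$: for a closed foliation on a complete manifold the normal exponential map restricts to a foliated diffeomorphism from a saturated neighbourhood of the zero section of $\nu(L)$ onto a saturated tubular neighbourhood $B_r(L)$ (the tube and homothety results of \cite{Molino,MendesRadeschi2015}), and it intertwines $\fol^\ell_L$ on $\nu(L)$ with its image on $B_r(L)$ together with the $\mathcal{G}$-representation, giving the claimed identification of orbits with leaves.

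The main obstacle is the smoothness package in the second paragraph: one must verify that the groups $K_p$ genuinely vary smoothly and that $\mathcal{P}$ acts smoothly on $K\to L$, so that $K\rtimes\mathcal{P}$ is a Lie, and not merely topological, groupoid. The delicate points are that $\Hol(L,p)$ may have countably many components (so one works with the holonomy bundle rather than a single structure group), and that $K_p$ is characterised only as the maximal connected subgroup of $\Orth(\fol_p)$; it is exactly the normal/characteristic behaviour of $K_p$ under the conjugations above that forces the local triviality of $K\to L$. Compactness of the ambient $\Orth(\Sp^\perp_p)$, available because $\fol$ is closed, keeps all these groups compact and the resulting groupoid Hausdorff and proper.
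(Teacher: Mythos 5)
A preliminary remark on the comparison itself: the paper contains no proof of this statement. It is imported as Theorem~1.5 of \cite{AlexandrinoInagakiStruchiner2018}, and the surrounding text only sets up the ingredients ($\Hol(L,p)$, the adapted connection $\nabla^H$, the groups $K_p$, and the defining formula for $\fol^\ell_L$). So your proposal can only be measured against the cited source, and in spirit it follows the expected route: build the groupoid of parallel transports of $\nabla^H$ (as a gauge groupoid of the holonomy bundle), form the semidirect product with the bundle of groups $p\mapsto K_p$, let it act tautologically on $\nu(L)$, and observe that the orbit of $v$ is exactly $\{P_\alpha(kv)\mid \alpha,\ k\in K_p\}$, which is the defining formula for the leaf of $\fol^\ell_L$ through $v$. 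The structural lemma you isolate --- that conjugation by a foliated linear isometry preserves $\Orth(\fol_\cdot)$ and hence carries the identity component $K_p$ onto $K_q$ --- is correct, and it is precisely what makes the semidirect product well defined and the orbit set closed under iterated applications of transports and fibrewise symmetries. Your honest flagging of the smoothness of the group bundle $K\to L$ and of the conjugation action as the technical heart is also appropriate; this is where the real work in \cite{AlexandrinoInagakiStruchiner2018} lies.

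Two points need repair. First, your opening inference is not valid as stated: the recorded property of $\nabla^H$ is that its holonomy group \emph{at the basepoint} equals $\Hol(L,p)$, which is a statement about loops. From this alone one cannot conclude that parallel transport along an arbitrary path $\alpha$ from $p$ to $q$ is a foliated isometry $(\nu_p(L_p),\fol_p)\to(\nu_q(L_q),\fol_q)$: any two transports along paths from $p$ to $q$ differ by a foliated loop holonomy, but that does not make any single one of them foliated. What you actually need --- and what the construction of the adapted connection in \cite{Alexandrino2010,AlexandrinoInagakiStruchiner2018} provides --- is the stronger statement that \emph{all} parallel transports of $\nabla^H$ preserve the infinitesimal foliations; with that as the input, the rest of your first paragraph goes through. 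Second, the closing claim that compactness of $\Orth(\Sp^\perp_p)$ ``keeps all these groups compact'' is false as a general principle: a subgroup of a compact Lie group need not be compact, and in particular the identity component $\Hol(L,p)_0$ is only a connected Lie subgroup of $\Orth(\Sp^\perp_p)$ and need not be closed in it (think of an irrational winding in a torus), so properness and Hausdorffness of the resulting groupoid do not come for free. Fortunately the statement you are proving asks only for a Lie groupoid and a representation whose orbits are the leaves of $\fol^\ell_L$, not for properness, so this overreach is inessential to the argument; but it should be deleted or replaced by the correct observation that $\Orth(\fol_p)$, and hence $K_p$, is compact (it is a \emph{closed} subgroup of $\Orth(\Sp^\perp_p)$ because the leaves of the infinitesimal foliation of a closed foliation are closed).
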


Informally, the groupoid $\G\rightrightarrows M$ is determined by the holonomy of the leaf.

In particular when the foliation $\fol$ is and orbit-like foliation, the foliation is locally given by Lie groupoid actions.

\begin{cor}\label{C: orbit-like foliation locally given by a Lie groupoid action}
Let $(M,\fol,\g)$ be an orbit-like foliation. The for any leaf $L\in \fol$, there exists a saturated tubular neighborhood $U$ of $L$ such that $(U,\fol|_{U})$ is foliated diffeomorphic to the foliation induced by a Lie groupoid left representation $\mu$ of a Lie group $\G_L\rightrightarrows L$ on the normal bundle $\nu(L)\to L$.
\end{cor}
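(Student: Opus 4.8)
The plan is to deduce this corollary directly from \th\ref{T: Local transversal homogeneous subfoliation}, specialised to the orbit-like setting, together with the very definition of an orbit-like foliation. The essential observation is that \th\ref{T: Local transversal homogeneous subfoliation} always produces a local model for the \emph{linearized} foliation $\fol^\ell_L$ as the orbits of a Lie groupoid representation, and that the orbit-like hypothesis is precisely the condition forcing $\fol^\ell_L$ to coincide with $\fol$ near $L$. So the work reduces to invoking the earlier theorem and then replacing $\fol^\ell_L$ by $\fol$.

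First I would apply \th\ref{T: Local transversal homogeneous subfoliation} to the leaf $L$: since $(M,\fol,\g)$ is a closed singular Riemannian foliation on a compact (hence complete) manifold, the theorem yields a saturated tubular neighborhood $B_{r_1}(L)$, a Lie groupoid $\G_L\rightrightarrows L$, and a left representation of $\G_L\rightrightarrows L$ on $\nu(L)\to L$, such that under the identification of a small saturated tubular neighborhood of the zero section of $\nu(L)$ with $B_{r_1}(L)$ via the normal exponential map, the orbits of the action groupoid $\G_L\times_L \nu(L)\rightrightarrows \nu(L)$ correspond to the leaves of $\fol^\ell_L$ on $B_{r_1}(L)$. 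Next, by the definition of an orbit-like foliation, there is a radius $r_2>0$ with $\fol^\ell_L=\fol$ on $B_{r_2}(L)$. Setting $r=\min\{r_1,r_2\}$ and $U=B_r(L)$, which is again a saturated tubular neighborhood of $L$, both conclusions hold simultaneously on $U$; hence the normal-exponential identification is a foliated diffeomorphism carrying the foliation of $U$ by the orbits of $\G_L\times_L\nu(L)\rightrightarrows\nu(L)$ (equivalently, by the leaves of $\fol^\ell_L$) onto $\fol|_U$. Restricting the representation of $\G_L\rightrightarrows L$ to this neighborhood of the zero section then gives exactly the asserted local model.

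The only point requiring care — and it is bookkeeping rather than a genuine obstacle — is the compatibility of the two radii and the fact that the normal exponential map is a bona fide foliated diffeomorphism onto $U$, not merely a leaf-preserving bijection. This is guaranteed on a sufficiently small saturated tubular neighborhood by the identification already supplied in \th\ref{T: Local transversal homogeneous subfoliation} and by the standard description of a tubular neighborhood of a closed leaf via the normal exponential map recalled in Section~\ref{Singular Riemannian foliations}; passing to $r=\min\{r_1,r_2\}$ makes both structures defined and mutually compatible on the same $U$, which is all that is needed.
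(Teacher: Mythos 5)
Your proof is correct and follows exactly the route the paper intends: the corollary is presented there as an immediate consequence of \th\ref{T: Local transversal homogeneous subfoliation} combined with the definition of an orbit-like foliation ($\fol^\ell_L=\fol$ near $L$), which is precisely your argument. The bookkeeping with the two radii and the saturated tubular neighborhood is the only content beyond the citation, and you handle it correctly.
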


\subsection{Example: Codimension 1 closed singular Riemannian foliations on compact manifolds} Here we present a particular example of a family of orbit-like foliations.  We begin by recalling the Slice Theorem for closed singular Riemannian foliations.

\begin{thm}[Slice Theorem, \cite{MendesRadeschi2015}]\th\label{T: Slice Theorem}
Let $(M,\fol)$ be a closed singular Riemannian foliation and $L\in \fol$. Then, there exists a sufficiently small $r>0$ such that a distance tubular neighborhood $B_r(L)$ is saturated and it is foliated diffeomorphic to 
\[
	(P\times_{\Hol(L,p)} \D^\perp_p,P\times_{\Hol(L,p)} \fol_p),
\]
where $P$ is a $\Hol(L,p)$-principal bundle over $L$. For $[\xi,v]\in P\times_{\Hol(L,p)} \D^\perp_p$ we have $[\zeta,w]\in L_{[\xi,v]}$ if and only if there exists $h\in K$ such that $h(v)=w$.
\end{thm}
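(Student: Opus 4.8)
The plan is to realize $B_r(L)$ as the normal disk bundle of the (embedded, compact) leaf $L$, to observe that along each normal slice the induced foliation is the infinitesimal foliation, and then to use the holonomy of the foliation to encode how these slices are glued along $L$, thereby presenting $B_r(L)$ as the associated bundle $P\times_{\Hol(L,p)}\D^\perp_p$. First I would choose $r$ smaller than the normal injectivity radius of $L$, so that the normal exponential map $\exp^\perp$ is a diffeomorphism from the radius-$r$ disk subbundle of $\nu(L)$ onto $B_r(L)$. That $B_r(L)$ is saturated follows from the transnormality condition (ii) in \th\ref{D: Singular Riemannian foliation} together with the local equidistance of leaves from \cite{Molino}: a geodesic issuing orthogonally from $L$ stays orthogonal to every leaf it meets, so a leaf meeting $B_r(L)$ is contained in a distance tube $\{q : d(q,L)=c\}$ with $c<r$ and hence lies entirely in $B_r(L)$.

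Next I would transport $\fol$ through $\exp^\perp$ to a foliation on the disk bundle. By the scale-invariance of the infinitesimal foliation established in Section~\ref{Singular Riemannian foliations} (the homothetic transformation lemma, \cite[Theorem 2.1]{AlexandrinoRadeschi2016}), this foliation is determined by its restriction to the unit normal sphere bundle and, on each fibre $\D^\perp_q$, it coincides with the infinitesimal foliation $\fol_q$; moreover every $(\D^\perp_q,\fol_q)$ is foliated-isometric to the model $(\D^\perp_p,\fol_p)$. The Ehresmann connection $H$ recalled before the statement provides parallel transport maps between fibres that are foliated isometries, with holonomy group $\Hol(L,p)$. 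Taking $P\to L$ to be the associated holonomy principal $\Hol(L,p)$-bundle, whose fibre over $q$ is the set of foliated isometries $(\D^\perp_p,\fol_p)\to(\D^\perp_q,\fol_q)$ obtained by $\nabla^H$-parallel transport, the evaluation map $[\xi,v]\mapsto \xi(v)$ is then a well-defined (because $\Hol(L,p)$ acts by foliated isometries) foliated diffeomorphism $P\times_{\Hol(L,p)}\D^\perp_p\to B_r(L)$, the standard associated-bundle argument giving smoothness and invertibility.

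Finally I would identify the leaves. A leaf of the transported foliation is swept out by moving within a fibre along $\fol_p$ and by parallel transport along $L$; since both the continuous holonomy $\Hol(L,p)_0$ and the intra-slice directions are realized inside the maximal connected group $K=K_p\subset\Orth(\fol_p)$, one obtains that $[\zeta,w]$ and $[\xi,v]$ lie in a common leaf exactly when $w\in Kv$, i.e. $h(v)=w$ for some $h\in K$. I expect this last identification to be the main obstacle: one must verify that the global leaf meets each slice in precisely a single $K$-orbit, neither larger (no extra identifications arising from the discrete holonomy within a fixed slice) nor smaller (parallel transport together with $K_p$ realizes the whole orbit). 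This is where Molino's local structure theory is genuinely used, guaranteeing that the germ of $\fol$ along $L$ is reconstructed from the holonomy representation on $(\D^\perp_p,\fol_p)$, and where completeness and the equidistance of the normal geodesics enter the argument.
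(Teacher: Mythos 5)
First, a framing remark: the paper gives no proof of this statement at all --- it is quoted as a known result from \cite{MendesRadeschi2015} --- so your proposal can only be measured against the proof in that reference, and against that standard it has a genuine gap. The two assertions you treat as routine inputs are in fact the entire content of the theorem. You take as given that paths in $L$ induce globally defined \emph{foliated isometries} $(\D^\perp_p,\fol_p)\to(\D^\perp_q,\fol_q)$ (``the Ehresmann connection \dots provides parallel transport maps between fibres that are foliated isometries, with holonomy group $\Hol(L,p)$''). For a singular foliation, sliding along plaques only produces germs of foliated diffeomorphisms near regular points, with no reason whatsoever to preserve the flat metric on the slice; upgrading these germs to linear isometries of the whole slice is the central technical achievement of \cite{MendesRadeschi2015}, obtained by linearizing leafwise vector fields around $L$ (via the homothetic transformation lemma) and proving that the linearized flows act on slices by isometries, closedness of $L$ entering to give compactness of the resulting group. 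Indeed, in this survey the very existence of the foliated isometries $\Gamma_\gamma$, hence of $\Hol(L,p)$, is credited to \cite{MendesRadeschi2015}, so invoking them to prove the slice theorem is close to circular. Likewise, your appeal to ``Molino's local structure theory'' to show that the evaluation map $[\xi,v]\mapsto\xi(v)$ is foliated --- i.e.\ that every leaf of $\fol$ in $B_r(L)$ is exactly the holonomy saturation of a single infinitesimal leaf, with no extra identifications and nothing missing --- does not close the gap: Molino's reconstruction results concern \emph{regular} Riemannian foliations and leaf closures, and the singular slice theorem remained open precisely until \cite{MendesRadeschi2015}.

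Separately, your leaf-identification step contains an outright error. You justify ``$[\zeta,w]\in L_{[\xi,v]}$ iff $w\in Kv$'' by asserting that the intra-slice directions are realized inside the maximal connected group $K_p\subset\Orth(\fol_p)$, i.e.\ that the leaves of the infinitesimal foliation are $K_p$-orbits. That is exactly homogeneity (orbit-likeness) of $\fol_p$, which fails in general: by \cite{FerusKarcherMuenzner1981,Radeschi2014} there are infinitely many singular Riemannian foliations on round spheres not given by group orbits, and their cones furnish infinitesimal foliations with $K_p\cdot v\subsetneq L_v$. As written, your argument establishes the leaf correspondence only in the orbit-like case (the setting of \th\ref{L: Codim 1 SRF are orbit like}), not in the generality of the theorem; a correct proof must describe the leaf through $[\xi,v]$ via the full infinitesimal leaf $L_v$ of $\fol_p$ saturated by the holonomy, which is how the identification is carried out in \cite{MendesRadeschi2015}.
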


We now consider $(M,\g)$ a compact Riemannian manifold equipped with a codimension one singular Riemannian foliation $\fol$ and consider the quotient map $\pi\colon M\to M/\fol$. The leaf space $M/\fol$ is homeomorphic to the closed interval $[-1,1]$, or to $\Sp^1$. The second case implies that $\pi$ is a fiber bundle, and the fibers of $\pi$ are the leaves of the foliation.

Moreover, given $p_{\pm}\in L_{\pm}$ by \cite[Proposition 1.4]{CorroMoreno2020}, \cite[Proposition p. 184]{Moreno2019} we have that the spaces of directions of $M/\fol$ at $\pm 1$ is homeomorphic to 
\[
(\Sp^{\perp}_{p_{\pm}}/\fol_{p_{\pm}})/\overline{\Hol}(L_{\pm},p_{\pm}).
\]
Since $M/\fol$ is homeomorphic to $[-1,1]$, and $\pi(L^{\pm})=\pm 1$, then we have that 
\[
(\Sp^{\perp}_{p_{\pm}}/\fol_{p_{\pm}})/\overline{\Hol}(L_{\pm},p_{\pm}) \cong \ast_{\pm}.
\]
Since $\overline{\Hol}(L_{\pm},p_{\pm})$ is discrete, we conclude that $\Sp^{\perp}_{p_{\pm}}/\fol_{p_{\pm}}$ is homeomorphic to a discrete space. 

We observe that either $\dim(\Sp^{\perp}_{p_{\pm}})=0$ or $\dim(\Sp_{p_{\pm}^{\perp}})\geq 1$. In the first case when $\dim(\Sp^{\perp}_{p_{\pm}})=0$, we observe that $\fol_{p_{\pm}}$ is either the trivial points-leaves foliation or the trivial one-leaf foliation. The case when $\fol_{p_{\pm}}$ is the trivial points-leaves foliation then $\Sp_{\pm}^\perp/\fol_{p_{\pm}}\cong \Sp^0$ and $\overline{\Hol}(L_{\pm},p_{\pm})\cong \Z_2$. In the case when $\fol_{p_{\pm}}$ is the trivial single-leaf foliation, then we have that $\Sp^{\perp}_{p_{\pm}}/\fol_{p_{\pm}}$ is a single point and thus homeomorphic to a point and $\overline{\Hol}(L_{\pm},p_{\pm})$ is trivial, i.e. $\Hol(L_{\pm},p_{\pm})$ is connected. The second case when $\dim(\Sp_{p_{\pm}^{\perp}})\geq 1$, implies that the foliation $\fol_{p_{\pm}}$ is a trivial single-leaf foliation since $\Sp_{p_{\pm}}^{\perp}/\fol_{p_{\pm}}$ is connected and $\overline{\Hol}(L_\pm,p_\pm)$ is discrete. 

Summarizing, by \th\ref{T: Slice Theorem}, each tubular neighborhood $B_r(L_{\pm})$ is foliated diffeomorphic to a disk bundle 
\[
\overline{P}_\pm:=P_{\pm}\times_{\Hol(L_{\pm},p_{\pm})} \D^{\perp}_{p_{\pm}}\to L_{\pm},
\]
and the foliation $\fol_{\pm}|_{B_{r}(L_{\pm})}$ is the cone of the trivial foliation $P_{\pm}\times_{\Hol(L_{\pm},p_{\pm})}\Sp^{\perp}_{p_{\pm}}$.

Thus we conclude that $M$ is foliated diffeomorphic to a double disk bundle decomposition 
\[
\overline{P}_{-}\bigsqcup_{h} \overline{P}_{+},
\]
where $h$ is a homeomorphism $h\colon\partial\overline{P}_{-}\cong (P_{-}\times_{\Hol(L_{-},p_{-})}\Sp_{p_{-}}^{\perp})\to (P_{+}\times_{\Hol(L_{+},p_{+})}\Sp_{p_{+}}^{\perp}\cong \overline{P}_{+})$.

\begin{lemma}\th\label{L: Codim 1 SRF are orbit like}
Let $\fol$ be a closed codimension $1$ singular Riemannian foliation on a compact Riemannian manifold $(M,\g)$. Then $\fol$ is an orbit-like foliation.
\end{lemma}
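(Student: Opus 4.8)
The plan is to use the characterization of orbit-like foliations recalled just above the statement: $\fol$ is orbit-like if and only if the infinitesimal foliation $\fol_p$ is homogeneous at every point $p\in M$, i.e.\ its leaves are the orbits of a subgroup of $\Orth(\Sp^\perp_p)$ acting on the normal sphere. Since homogeneity is invariant under foliated isometries, and the infinitesimal foliations along a fixed leaf are mutually foliated-isometric via the holonomy transport $G\colon(\Sp^\perp_q,\fol_q)\to(\Sp^\perp_p,\fol_p)$, it suffices to verify homogeneity of $\fol_p$ for one representative point in each leaf. As the infinitesimal foliation is automatically homogeneous at regular points, the whole question reduces to the points lying on the singular leaves, and these occur only over the endpoints of $M/\fol$.

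First I would dispose of the case $M/\fol\cong\Sp^1$: here $\pi\colon M\to M/\fol$ is a fiber bundle, so $\fol$ is regular and has no singular leaves. At every $p$ the normal space is $1$-dimensional, $\Sp^\perp_p\cong\Sp^0$, and $\fol_p$ is the trivial point-leaves foliation, which is the orbit foliation of the trivial group and hence homogeneous. The same observation, using that in codimension one a regular point has normal sphere $\Sp^0$ carrying the point-leaves foliation, handles every regular point in the remaining case $M/\fol\cong[-1,1]$. Thus the problem reduces to the two singular leaves $L_\pm$ over $\pm1$.

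For the singular leaves I would invoke the case analysis carried out immediately before the statement. Fixing $p_\pm\in L_\pm$, we have established there that $\Sp^\perp_{p_\pm}/\fol_{p_\pm}$ is discrete, and that exactly the following possibilities occur: either $\dim\Sp^\perp_{p_\pm}\geq1$ and $\fol_{p_\pm}$ is the trivial single-leaf foliation, or $\dim\Sp^\perp_{p_\pm}=0$ (so $\Sp^\perp_{p_\pm}\cong\Sp^0$) and $\fol_{p_\pm}$ is either the single-leaf or the point-leaves foliation. In each case the foliation is homogeneous: the single-leaf foliation on $\Sp^n$ is the orbit foliation of the transitive $\Orth(n+1)$-action, the point-leaves foliation on $\Sp^0$ is the orbit foliation of the trivial group, and the single-leaf foliation on $\Sp^0$ is the orbit foliation of $\Orth(1)\cong\Z_2$ acting by the antipodal map. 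Hence $\fol_{p_\pm}$ is homogeneous for both signs.

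Combining these observations, $\fol_p$ is homogeneous at every point of $M$, so $\fol$ is orbit-like. The argument is essentially bookkeeping over the already-completed structural analysis; the only mildly delicate point, and the one I would state most carefully, is the identification of the single-leaf foliation of a positive-dimensional normal sphere with the orbit foliation of the transitive orthogonal action, which is precisely what upgrades the statement ``the quotient is a point'' into genuine homogeneity.
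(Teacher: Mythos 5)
Your overall route is the same as the paper's: both proofs feed the characterization ``orbit-like $\iff$ the infinitesimal foliation $\fol_p$ is homogeneous at every point'' into the case analysis of normal spheres carried out just before the statement, handling regular points via the point-leaves foliation on $\Sp^0$ and the singular leaves via transitivity of an orthogonal group on a positive-dimensional normal sphere. The reduction to one point per leaf and the explicit dispatch of the case $M/\fol\cong\Sp^1$ are harmless additions.

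There is, however, one step that does not work as stated: your third sub-case, where you declare the single-leaf foliation on $\Sp^0$ homogeneous because it is the orbit foliation of $\Orth(1)\cong\Z_2$ acting antipodally. The homogeneity relevant here is homogeneity with respect to a \emph{connected} group: the linearized foliation $\fol^\ell_L$, whose agreement with $\fol$ is the definition of orbit-like, is generated by parallel transport together with the \emph{maximal connected} subgroup $K_p\subset\Orth(\fol_p)$, and this is precisely why the paper's proof passes from $\Orth(\fol_p)$ to the connected group $\SO(\dim(L_p)+1)$ before invoking transitivity. No connected group acts transitively on the two-point space $\Sp^0$, so the $\Z_2$ argument would not establish orbit-likeness if that sub-case actually occurred. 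The repair is to observe that it cannot occur: although the discussion preceding the lemma lists a ``trivial single-leaf foliation'' on $\Sp^0$ as an abstract possibility, the leaves of the infinitesimal foliation are by construction connected (they are connected components of intersections of leaves of $\fol$ with the slice), so on a $0$-dimensional normal sphere $\fol_p$ is necessarily the point-leaves foliation, which is homogeneous via the trivial (connected) group. This is in effect how the paper's proof treats codimension-one ``singular'' leaves such as the central circle of a Möbius band: the infinitesimal foliation there is the point-leaves foliation, and the doubling of nearby leaves is produced by the holonomy/parallel-transport part of $\fol^\ell_L$, not by the group $K_p$.
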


\begin{proof}
As we have seen above, for any point $p\in M$, the infinitesimal foliation of  $L_p$ is the trivial leaf foliation $\{\Sp^{\perp}_{p}\}$, if $L_p$ is one of the singular leaves and has codimension at least $2$, or it is trivial point-foliation $\{\{-\},\{+\}\}$ on $\Sp^{\perp}_{p}\cong \Sp^{0}\cong \{-,+\}$. In both cases we have that $\Orth(\fol_{p}) \cong \Orth(\dim(L_{p})+1)$, and thus the maximal connected Lie subgroup of $\Orth(\fol_{p})$ is $\SO(\dim(L_{p})+1)$. The conclusion follows from the observation that $\SO(\dim(L_{p})+1)$ acts transitively on $\Sp_{p}^{\perp}$, which is the only leaf of the infinitesimal foliation $\fol_p$.
\end{proof}

\begin{remark}
We point out that the Lie groupoids $\G_\pm\rightrightarrows L_{\pm}$ depend heavily on the topology of of the singular leaves, and thus the Lie groupoids $\G_\pm\times_{L_\pm}\nu(L_{\pm})\rightrightarrows \nu(L_{\pm})$ in \th\ref{T: Local transversal homogeneous subfoliation} which determine locally the foliation $\fol$, even in the case of a Lie group action. This can lead to  possibly having different ``rates'' of collapse. This leads to problems when we try to re-glue the deformed tubular neighborhoods of the singular leaves $L_\pm$.
\end{remark}

\begin{example}
We consider the action of $\Sp^1$ on the round $\R P^2$ of cohomogeneity one by isometries. The orbit space with the induced metric is isometric to $[0,\pi/2]$. There are two singular orbits: a fixed point $L_{\pi/2}$, corresponding to the preimage of $\pi/2$, and a circle orbit $L_0$ with isotropy $\Z_2$ corresponding to $0$. The foliation around $L_0$ is the circle foliation on the Möbius band, and the foliation around the fixed point is diffeomorphic to a circle action on $\D^2\subset \R^2$ by rotations around the origin. In this case the rate of convergence is controlled globally by the circle action, so  in this case the deformed metrics on each piece agree with one another around the boundaries of the pieces.
\end{example}

Thus it is natural to ask the following problem.

\begin{problem}
Let $(M,\fol,\g)$ be a closed singular Riemannian foliation such that $M/\fol$ is homeomorphic to $[-1,1]$. Let $L_\pm$ be the leaves corresponding to $\pm 1\in [-1,1]$. Consider the the Lie groupoid representations $(\G_\pm \times_{L_{\pm}} \nu(L_{\pm}),\g^{\pm}_\varepsilon)$ equipped with Cheeger deformation metrics give by Section~\ref{S: Cheeger like deformation for Lie groupoids actions}. How does the deformation $\g^-_\varepsilon$ relate to  $\g^+_\varepsilon$ on a tubular neighborhood of a regular leaf?
\end{problem}

Shedding light on this problem would allow us to determine if the foliated Cheeger deformation metrics $\g^{\pm}_\varepsilon$ can be glued together in a way that we preserve control of the curvature to obtain a global deformation.

\begin{remark}
One can always use a foliated partition of unity \cite{CorroFernandezPerales22} to glue the metrics $\g^{\pm}_{t}$ to get a global deformation, but with the cost of losing control on the sectional curvature on the overlap of the two tubular neighborhoods around the leaves $L_\pm$.
\end{remark}

Even in the case when the boundaries $(\partial\overline{P}_\pm,\g^\pm_\varepsilon|_{\partial\overline{P}_\pm})$ are isometric, we need some more information about the geometry around them to obtain a a smooth Riemannian metric by gluing their boundaries via an isomorphism. For example from \cite[Corollary B]{ReiserWraith2023} it follows that when the sum of the second fundamental forms of $\partial\overline{P}_\pm\hookrightarrow (\overline{P}_\pm,\g^\pm_\varepsilon)$ is positive definite, then we can glue $(\overline{P}_-,\g^-_\varepsilon)$ to $(\overline{P}_+,\g^+_\varepsilon)$ to obtain a smooth metric $\g_\varepsilon$ Ricci curvature bounded below by some constant $\kappa_\varepsilon$ that depends on $\varepsilon$. From \th\ref{MT: Cheeger deformation collapses} we see that this constant $\kappa_\varepsilon$ depends on the second fundamental form $\II_\varepsilon$.

Thus for some cases we might share some light on the following problem, related to \th\ref{ConjectureGrove}.

\begin{problem}
Given $(M,\fol,\g)$ a singular Riemannian foliation of codimension $1$ with closed leaves on a compact simply-connected manifold, does there exists a new (maybe foliated) Riemannian metric of positive or non-negative Ricci curvature.
\end{problem}

This problem has a positive answer in the homogeneous case \cite{GroveZiller2002}.


%

\bibliographystyle{siam}
\bibliography{Bibliography}
\end{document}